\newtheorem{theorem}{Theorem}
\newtheorem{remark}{Remark}
\newtheorem{proposition}{Proposition}
\newtheorem{lemma}{Lemma}
\newtheorem{corollary}{Corollary}
\newtheorem{hyp}{Hypothesis}
\newcommand{\dd}{\mathrm{d}}
\newcommand{\e}{\mathrm{e}}
\newcommand{\ul}{\underline}
\newcommand{\lap}{\Delta}
\newcommand{\nnR}{\mathbb{R}_{\geq0}}
\newcommand{\Jeps}{{\mathcal{J}^{(\epsilon)}}}
\newcommand{\rheps}{\rho^{(\epsilon)}}
\newcommand{\ueps}{{u^{(\epsilon)}}}
\newcommand{\eps}{\epsilon}
\newcommand{\dom}{{\mathbb{T}^d}}
\newcommand{\Hm}[1]{{H^{#1}(\dom)}}
\newcommand{\Lp}[1]{{L^{#1}(\dom)}}
\newcommand{\Td}{\mathbb{T}^d}
\newcommand{\sys}{equations \eqref{eq:pde}-\eqref{eq:tau}}
\newcommand{\sysaut}{equations \eqref{eq:pde} and \eqref{eq:uaut}-\eqref{eq:tauaut}}
\newcommand{\Aopaut}{\mathcal{A}^{(\eps)}}
\newcommand{\Aopnon}{\mathcal{A}^{(\eps)}_t}
\title{Local existence of solutions to a nonlinear autonomous PDE model for population dynamics with nonlocal transport and competition}
\author{Michael R. Lindstrom; mike.lindstrom@utrgv.edu}
\begin{document}

\maketitle

\begin{abstract}
In this paper, we prove that a particular nondegenerate, nonlinear, autonomous parabolic partial differential equation with a nonlocal mass transfer admits the local existence of classical solutions. The equation was developed to qualitatively describe temporal changes in population densities over space through accounting for location desirability and fast, long-range travel. Beginning with sufficiently regular initial conditions, through smoothing the PDE and employing energy arguments, we obtain a sequence of approximators converging to a classical solution. 
\end{abstract}

{\bf Keywords:} local existence, nonlocal operator, nonlinear diffusion, non-degenerate parabolic PDE, autonomous PDE

\section{Introduction}

\label{sec:intro}

Partial differential equations (PDEs) are a valuable means of modelling physical and biological systems. These models help in understanding key qualitative features and in making quantitative predictions. PDE models are often derived from consideration of physical principles, but it is not always clear if solutions even exist; or, if they do exist, what properties they enjoy such as smoothness, uniqueness, etc. The canonical example of this is the Navier Stokes Equations \cite{navier,bertozzi} in fluid dynamics where global existence of solutions is still unknown \cite{devlin2002millennium}. If a PDE is to be studied numerically, knowing an appropriate solution space is then helpful in selecting a suitable numerical method --- see for example books addressing numerical schemes for different classes of PDE \cite{leveque2002finite,zienkiewicz1977finite,holden2010splitting}. 

In this paper, we confirm that, under suitable hypotheses, there exist classical solutions to a particular nonlinear parabolic integro-differential equation model. The model of interest \cite{m3as} was derived to describe populations of people experiencing homelessness. The model is ecological in nature, taking into consideration the desirability of a location (such that individuals may be more likely to stay at a location when it is more desirable), nonlocal travel (over a short time, an individual can travel a large distance), and that desirability decreases as the local population in an area increases (due to reduced available resources per individual). The domain itself imposes additional data on the PDE through spatiotemporal variations in entry and exit rates. The understanding of this model is important as it was the first of its kind, to the author's knowledge, PDE model describing homelessness. While many important properties have been proven on solutions to the model \cite{m3as} assuming their existence, the existence of solutions has not been explored. This paper thus establishes that the model previously derived is a theoretically reasonable framework --- from a mathematical standpoint --- through which to study homelessness. Understanding this model mathematically, and the types of solutions that exist, can provide a solid foundation for it being used to better understand the dynamics of homelessness, for extending the model by incorporating more details, for adapting the model suitably to other applications, and even to fitting the model to real-world data. 

Nonlocal parabolic PDEs arise in many settings including mathematical finance \cite{abergel2010nonlinear}, models of lieukemia \cite{busse2017asymptotic}, and population dynamics \cite{holmes1994partial}. The use of PDEs to study populations is quite extensive. Some examples include the classical Keller-Segel model for a mobile population in the presence of a chemo-attractant \cite{horstmann20031970}, the Fisher PDE to model populations undergoing dispersion and logistic growth \cite{holmes1994partial}, Lotka-Volterra systems for competition \cite{holmes1994partial,berestycki2018predators}, models of pedestrians in crowds \cite{hughes2000flow}, biological aggregation \cite{bertozzi2010existence}, and models of crime dynamics \cite{short2010dissipation,zipkin2014cops,crime}.

Rigorous analysis has proved that Patlak-Keller-Segel models with degenerate diffusion are locally well-posed in $2$ dimensions and globally well-posed in $3$ dimensions \cite{bedrossian2011local}. The Patlak-Keller-Segel models are nonlocal in space, like the model studied here. Beyond mathematical models themselves, solutions are known to exist for quite a broad range of nonlocal parabolic PDEs \cite{alibaud2007existence,ackleh2000existence}, but our system violates some hypotheses used to establish these results, such as monotonicity in the dependent variable or derivatives of nonlinear terms appearing. For simplicity, we assume the data in our system are smooth, but 
 other authors also study PDEs with nonregular data \cite{dall1992existence}. Two somewhat distinguishing features of our model include that the highest derivative is acting on a nonlinear function of the dependent variable and the nonlocal operator in our model is not a convolution. The work of \cite{crime,greer} that establish existence of solutions to their respective models are local and have the highest derivative acting on a linear function of the dependent variable. Although our PDE of interest has a high degree of nonlinearity, methodologically, we proceed to prove local existence through reasonably similar means: establishing local existence of smoothed solutions and extracting a subsequence that tends to a valid solution. The use of Lyapunov-type methods to establish existence of solutions to PDEs is another common strategy \cite{lyap}. 

With our work placed in context, the remaining paper is organized as follows: the PDE model, the main theorems, and the key steps of their proofs are provided in Section \ref{sec:model}; important results for the proofs are given in Section \ref{sec:theory}; we prove the main results through a series of smaller steps in Section \ref{sec:steps}; lastly, in Section \ref{sec:conclusion}, we conclude our work and contemplate further directions. In Section \ref{app:nom} of the Appendix, key definitions and nomenclature are explained, and proofs of various supporting results used in this paper are given. A transcription of proofs of important mollifier results is given in Section \ref{app:greer} of the Appendix.

\section{Background}

\label{sec:model}

\subsection{Model Equation and Background}

In \cite{m3as}, a nonlinear PDE was derived to describe the mean field dynamics of agents on a lattice with a nonlocal travel term. The original equation was developed as a first attempt to use PDEs to qualitatively describe homeless population densities.
 Over a bounded, $d$-dimensional spatial domain $\Omega \subset \mathbb{R}^d$, we consider the evolution of a density field $\rho(t,x)$ at time $t \geq 0$ and position $x \in \Omega$ given by
\begin{align}
\rho_t &= \delta \lap( \rho u ) + \eta - \omega \rho + I[\gamma \rho u] - \gamma \rho u \label{eq:pde} \\
u &:= (\kappa(t,x), \rho(t,x)) \mapsto u^+ - \kappa \frac{u^-}{1 + \rho/\tilde \rho} \label{eq:u} \\
I[q](t,x) &:= \int_\Omega \tau(t,y,x) q(t,y) \dd y, \quad \forall x \in \Omega, t \geq 0 \label{eq:I} \\
\int_\Omega \tau(t,y,x) \dd x &= 1, \quad \forall y \in \Omega, t \geq 0, \label{eq:tau}
\end{align}
with a prescribed initial density at $t=0$ and suitable boundary conditions on $\partial \Omega.$ Depending on scalings chosen, the system as such can be dimensional or dimensionless.

We assume that
\begin{hyp}[General Hypotheses] \label{hyp:all} $ $ 
\begin{enumerate}
\item For $x, y \in \Omega$ and $t \geq 0$: $0 \leq \kappa(t,x) \leq 1$, $\eta(t,x) \geq 0$, $\omega(t,x) \geq 0$, $\gamma(t,x) \geq 0$, and $\tau(t,y,x) \geq 0$;
\item All of $\kappa$, $\eta$, $\omega$, $\gamma$, and $\tau$ are $\mathcal{C}^\infty$-smooth functions in all time and space arguments with mixed partial derivatives of all orders also $\mathcal{C}^\infty$-smooth;
\item For each $m = 0, 1, 2, ...$, there is a constant $\Xi_m$ so that the $L^\infty(\Omega)$-norm of all $m^{\text{th}}$-order derivatives of $\kappa$, $\eta$, $\omega$, $\gamma$, and $\tau$ are bounded by $\Xi_m < \infty$ for all $t \geq 0$; and
\item $\delta, u^-, u^+$, and $\tilde \rho$ are constants such that $0 < \delta$, $0 < u^- < u^+ < 1,$ and $\tilde \rho > 0$.
\end{enumerate}
\end{hyp}

The terms in the model, in the case of homelessness and in a dimensional framework, can be explained as follows:
\begin{itemize}
\item $\eta$ is an entry rate: it is the mean number of people entering the population of people experiencing homelessness per unit area per unit time. Local factors such as increases in rent could increase $\eta$.
\item $\omega$ is an exit rate: it is the rate that a person experiencing homelessness ceases to be homeless. Local features such as more affordable housing units or job placement could increase $\omega$. 
\item $\gamma$ is a rate of long distance travel: it is the rate that a person who is homeless and wishes to relocate will travel a long distance in the city. Local features such as the number of public transit options affect $\gamma$.
\item $\tau$ models the probability (density) in moving from one region to another over a long distance at any given time. Factors such as routes of public transit or general knowledge about what is happening in different parts of the city affect $\tau$. The fact that $\int_\Omega \tau(t,y,x) \dd x = 1$ amounts to the fact that, starting from $y$ (getting on a bus, say) at time $t$, the probability of going somewhere must be $1$.
\item $I$ is a nonlocal operator that moves the density between points with a transfer kernel $\tau$: $I[\gamma \rho u](t,x)$ is the number of people experiencing homelessness moving to a given location per unit area per unit time through all long-range travel over $\Omega$. From the interpretations of $\tau$ and $\gamma$, we note that $\tau$ is likely heavily influenced by both public transit and general knowledge of how different parts of the city are changing.
\item $u$ is an ``unattractiveness" term and it represents the probability a person who is homeless wishes to leave their current location, either by walking to a nearby location or by travelling long-range in the city. It takes into account the effects of competition/crowding, since $u$ increases with $\rho$. While $\rho$ is the local population density, the probability a given individual wishes to relocate is $u$, giving an effective quantity $\rho u$ to diffuse/move.
\item $\kappa$ describes the amount of resources in a location: for larger $\kappa$ values, the unattractiveness is smaller.
\end{itemize}
The constants also have meaning as follows: 
\begin{itemize}
\item $\delta$ is a diffusivity: by assuming the local movement is roughly random and taking a diffusive limit, there is diffusion. 
\item $u^+$ is the maximum possible unattractiveness of any location: we assume $u^+<1$ because, observationally, we note that individuals experiencing homelessness may be found in all parts of a city, even areas with limited or no resources.
\item $u^+-u^-$ is the minimum possible unattractiveness of any location: we assume that $u^- < u^+$ so $u^+-u^-$ is strictly positive. Because $u$ is representing the probability a person experiencing homelessness will leave the area, we do not allow $u$ to be $0$ because that would be too deterministic and not allow a person to move about. 
\item $\tilde \rho$ controls how sensitive the unattractiveness is to the population density: as $\tilde \rho$ increases, the unattractiveness $u$ is affected less and less by changes in $\rho$.
\end{itemize} We note that the unattractiveness can never reach zero for any $\rho$ and the equation is nondegenerate.

The model does not directly account for aggregation effects whereby individuals who are homeless may choose to group together, nor does it account for how the homeless population itself may affect the features of the city. In a sense, it is a ``leading order" model to describe homelessness in PDE fashion, where additional effects could be added.

We remark that some of the terms in this model may be relevant in other settings, too. Models of foraging do take into account tendencies of species leaving a patch of resources after some time \cite{bartumeus2009optimal}, which is what $u$ effectively describes. In the case of models for locusts, the population density does affect the dispersion \cite{georgiou2020modelling}. Some species living in the ocean are heavily influenced and dispersed by ocean currents \cite{pringle2017ocean}, which may provide nonlocal-like behaviour similar to $I$ by rapidly moving the species around. A nonlocal operator $I$ could even be relevant for describing the general population where long-range movement occurs in commuting to/from work or between cities.

Various qualitative properties of solutions were proven in \cite{m3as}, assuming the existence of classical solutions, with $\Omega = \mathbb{T}^{d}$. In this paper, we show that classical solutions do exist on the torus. The torus, owing to the empty boundary, makes preliminary analysis simpler, without having to deal with complex geometries or boundary conditions. However, large cities often do have repeated pockets of residential and commercial districts, which gives a loosely periodic pattern, and thus the model could still be insightful.

For completeness, we state the uniqueness result for classical solutions already proven \cite{m3as}.
\begin{lemma}[Uniqueness of Smooth Solutions]
\label{thm:m3as_unique}
Let $T>0$ and suppose \\ $\rho_1, \rho_2 \in \mathcal{C}^0([0,T], \mathcal{C}^2(\Td)) \cap \mathcal{C}^1([0,T], \mathcal{C}^0(\Td))$ 
are two solutions to Eqs. \ref{eq:pde}-\ref{eq:tau} under Hypothesis \ref{hyp:all}. If $\rho_1$ and $\rho_2$ have identical and strictly positive $\mathcal{C}^2(\Td)$ initial conditions at $t=0$, then $\rho_1 = \rho_2$ on $0 \leq t \leq T$.
\end{lemma}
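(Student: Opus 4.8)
The plan is to argue by an $\Lp{2}$ energy estimate on the difference $w := \rho_1 - \rho_2$, which vanishes at $t=0$, and to close with Gr\"onwall's inequality. The crucial preliminary step is to linearize the nonlinear transport quantity $\rho u$. Writing $\rho u = u^+\rho - \kappa u^- g(\rho)$ with $g(\rho) := \rho/(1+\rho/\tilde\rho)$, and using the exact Taylor remainder
\begin{equation*}
g(\rho_1) - g(\rho_2) = \left( \int_0^1 g'\big((1-s)\rho_2 + s\rho_1\big)\,\dd s \right) w,
\end{equation*}
one obtains $\rho_1 u_1 - \rho_2 u_2 = a\,w$, where $a := u^+ - \kappa u^- \int_0^1 g'((1-s)\rho_2 + s\rho_1)\,\dd s$. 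Since $g'(\rho) = (1+\rho/\tilde\rho)^{-2} \in (0,1]$ for $\rho \geq 0$, Hypothesis~\ref{hyp:all} gives the two-sided bound $u^+ - u^- \leq a \leq u^+$; in particular $a$ is strictly positive, which is exactly the nondegeneracy of the model. Because $\rho_1,\rho_2 \in \mathcal{C}^0([0,T],\mathcal{C}^2(\Td))$ are bounded on the compact set $[0,T]\times\Td$ (and, being strictly positive initially, range in a compact subset of $(-\tilde\rho,\infty)$ on which $g$ is smooth) while $\kappa$ has bounded derivatives, the coefficient $a$ lies in $\mathcal{C}^0([0,T],\mathcal{C}^2(\Td))$ with $\|\lap a\|_{\Lp{\infty}}$ bounded uniformly on $[0,T]$.

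With this linearization, subtracting the two copies of \eqref{eq:pde} yields the linear equation
\begin{equation*}
w_t = \delta \lap(a w) - \omega w + I[\gamma a w] - \gamma a w .
\end{equation*}
I would then set $E(t) := \tfrac12 \|w(t)\|_{\Lp{2}}^2$, which is $\mathcal{C}^1$ with $\dot E = \int_{\Td} w\, w_t\,\dd x$ thanks to the $\mathcal{C}^1([0,T],\mathcal{C}^0(\Td))$ regularity, and estimate each term. Integrating the diffusion term by parts twice on the boundaryless torus gives
\begin{equation*}
\delta \int_{\Td} w\, \lap(a w)\,\dd x = -\delta \int_{\Td} a\,|\nabla w|^2\,\dd x + \frac{\delta}{2}\int_{\Td} (\lap a)\, w^2\,\dd x ,
\end{equation*}
where the first term is $\leq 0$ since $a > 0$ and may be discarded, while the second is bounded by $\tfrac{\delta}{2}\|\lap a\|_{\Lp{\infty}} \int_{\Td} w^2$. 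The reaction terms $-\int_{\Td} \omega w^2$ and $-\int_{\Td} \gamma a w^2$ are nonpositive under Hypothesis~\ref{hyp:all} and are dropped.

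The only remaining contribution is the nonlocal term, and since $I$ is not a convolution a little care is needed, but compactness of $\Td$ makes it harmless: by \eqref{eq:I} and Young's inequality,
\begin{equation*}
\int_{\Td} w(t,x)\, I[\gamma a w](t,x)\,\dd x \leq \frac12 \int_{\Td}\!\int_{\Td} \tau(t,y,x)\gamma(t,y)a(t,y)\big(w(t,x)^2 + w(t,y)^2\big)\,\dd y\,\dd x ,
\end{equation*}
which is bounded by $C_I \int_{\Td} w^2$ with $C_I$ depending only on $\|\tau\|_{\Lp{\infty}}$, $\|\gamma\|_{\Lp{\infty}}$, $u^+$, and $|\Td|$ (the $w(t,y)^2$ piece is controlled directly by the normalization \eqref{eq:tau}, while the $w(t,x)^2$ piece uses $\int_{\Td}\tau(t,y,x)\,\dd y \leq \|\tau\|_{\Lp{\infty}}|\Td|$). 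Collecting the surviving terms gives $\dot E \leq 2C E$ for a constant $C$ depending on $\delta$, $\|\lap a\|_{\Lp{\infty}}$, and $C_I$; since $E(0)=0$, Gr\"onwall forces $E \equiv 0$, whence $\rho_1 = \rho_2$ on $[0,T]$.

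I expect the main obstacle to be the diffusion term rather than the nonlocal one: establishing that the linearized coefficient $a$ inherits $\mathcal{C}^2$ spatial regularity with derivatives bounded uniformly in $t$, and then correctly absorbing the cross term $-\delta\int_{\Td} w\,\nabla a\cdot\nabla w = \tfrac{\delta}{2}\int_{\Td}(\lap a)w^2$ so that no derivative of $w$ beyond what the dissipation controls is left over. The nondegeneracy $a \geq u^+ - u^- > 0$ is precisely what lets the leading $-\delta\int_{\Td} a|\nabla w|^2$ term be discarded with the favorable sign.
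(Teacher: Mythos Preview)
The paper does not give its own proof of this lemma; it merely quotes the statement from \cite{m3as}. Your argument is the standard $L^2$-energy/Gr\"onwall proof of uniqueness for nondegenerate quasilinear parabolic equations, and it is carried out correctly: the linearization $\rho_1 u_1-\rho_2 u_2=aw$ via the integral mean value theorem, the identity $\int_{\Td} w\,\lap(aw)\,\dd x=-\int_{\Td} a|\nabla w|^2\,\dd x+\tfrac12\int_{\Td}(\lap a)w^2\,\dd x$, and the treatment of the nonlocal term are all fine.

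One small point to tighten. The lower bound $a\ge u^+-u^->0$ uses $g'\le 1$, i.e.\ $\rho_1,\rho_2\ge 0$ on all of $[0,T]\times\Td$, whereas the lemma only hypothesizes strict positivity at $t=0$; your parenthetical that the solutions range in a compact subset of $(-\tilde\rho,\infty)$ gives regularity of $a$ but not this sign. The cleanest fix is simply to invoke the positivity preservation for classical solutions established in \cite{m3as}. Alternatively, run a short continuation argument: pick $\eta>0$ so small that $u^+-u^-(1-\eta/\tilde\rho)^{-2}>0$, and apply your estimate on the maximal interval where both solutions remain above $-\eta$; there $a$ is still uniformly positive and Gr\"onwall forces $\rho_1=\rho_2$, so the two solutions coincide up to (and hence at) the first time either touches $-\eta$. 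Combined with the a priori lower bound on classical solutions from \cite{m3as}, this closes the argument on all of $[0,T]$.
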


\subsection{Main Results and Proof Structure}

The notation we use throughout this manuscript is quite standard but section \ref{app:nom} of the Appendix defines much of our notation and various function spaces.

While we do obtain a preliminary result for the system \sys\ under Hypothesis \ref{hyp:all}, our focus is on the autonomous system, i.e., Eq.  \eqref{eq:pde} where
\begin{align}
u &:= (\kappa(x), \rho(t,x)) \mapsto u^+ - \kappa \frac{u^-}{1 + \rho/\tilde \rho} \label{eq:uaut} \\
I[q](t,x) &:= \int_\Omega \tau(y,x) q(t,y) \dd y, \quad \forall x \in \Omega, t \geq 0 \label{eq:Iaut} \\
\int_\Omega \tau(y,x) \dd x &= 1, \quad \forall y \in \Omega, \label{eq:tauaut}
\end{align}
with 
\begin{hyp}[Autonomous Hypotheses] \label{hyp:aut} $ $ 
\begin{enumerate}
\item For $x, y \in \Omega$: $0 \leq \kappa(x) \leq 1$, $\eta(x) \geq 0$, $\omega(x) \geq 0$, $\gamma(x) \geq 0$, and $\tau(y,x) \geq 0$;
\item All of $\kappa$, $\eta$, $\omega$, $\gamma$, and $\tau$ are $\mathcal{C}^\infty$-smooth functions in their arguments with mixed partial derivatives of all orders also $\mathcal{C}^\infty$-smooth;
\item For each $m = 0, 1, 2, ...$, there is a constant $\Xi_m$ so that the $L^\infty(\Omega)$-norm of all $m^{\text{th}}$-order derivatives of $\kappa$, $\eta$, $\omega$, $\gamma$, and $\tau$ are bounded by $\Xi_m < \infty$; and
\item $\delta, u^-, u^+$, and $\tilde \rho$ are constants such that $0 < \delta$, $0 < u^- < u^+ < 1,$ and $\tilde \rho > 0$.
\end{enumerate}
\end{hyp}
In other words, the data are time-independent. Our main result will be Theorem \ref{thm:local_exist} below:

\begin{theorem}[Local Existence] \label{thm:local_exist}
Let $d \in \mathbb{N}$. Let $\rho_0 \in \Hm{m}$ with $\rho_0 > 0$ a.e. and $m \in \mathbb{Z}$ with $m > d^* :=  3+d.$ Then there exists a $T_m>0$ such that $\rho^*(t,x)$ is the unique $\mathcal{C}^0([0,T_m]; \Hm{m}) \cap \mathcal{C}^1([0,T_m]; \Hm{m-2})$ solution to \sysaut\ with $\rho^*(0,\cdot) = \rho_0.$ Moreover, $\rho^*$ is a classical solution, namely $\rho^* \in \mathcal{C}^0( [0,T_m]; \mathcal{C}^2(\dom)) \cap \mathcal{C}^1( [0,T_m]; \mathcal{C}^0(\dom)).$
\end{theorem}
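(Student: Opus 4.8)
The plan is to follow the vanishing-mollification scheme signalled in the abstract: regularize the equation so that its right-hand side becomes a genuine locally Lipschitz vector field on $\Hm{m}$, solve the regularized problem by Banach-space ODE theory, derive energy estimates uniform in the regularization parameter, and then extract a limit. First I would rewrite the diffusion in quasilinear form. Setting $a(\rho) := \partial_\rho(\rho u) = u^+ - \kappa u^-/(1+\rho/\tilde\rho)^2$, Hypothesis \ref{hyp:aut} gives $a(\rho) \ge u^+ - u^- > 0$ for $\rho \ge 0$, so that $\delta\lap(\rho u) = \delta\,\nabla\cdot\!\big(a(\rho)\nabla\rho\big)$ is uniformly parabolic; this nondegeneracy is the structural fact everything rests on. For each $\eps > 0$ I would then introduce a mollified operator $\Aopaut$, replacing $\rho$ by $\J{\eps}\rho$ inside (and mollifying outside) the top-order term, chosen so that $\Aopaut : \Hm{m} \to \Hm{m}$ is locally Lipschitz: the mollifier gains the two derivatives lost to $\lap$, while the nonlocal operator $I$, the nonlinearity $u$, and multiplication by the smooth bounded coefficients $\eta,\omega,\gamma$ are all tame on the algebra $\Hm{m}$ (here $m > d/2$). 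The Picard--Lindel\"of theorem on the Banach space $\Hm{m}$ then yields, for each $\eps$, a unique maximal solution $\rheps \in \mathcal{C}^1([0,T_\eps); \Hm{m})$.

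The core of the argument is a single a priori energy estimate, uniform in $\eps$. Differentiating the equation, pairing with $\rheps$ in $\Hm{m}$, and integrating by parts, the top-order part of the parabolic term is $-\delta\int a(\rheps)\,|\nabla\partial^\alpha\rheps|^2 \le -\delta(u^+-u^-)\|\nabla\partial^\alpha\rheps\|_{\Lp2}^2$ (up to mollifiers, which are self-adjoint and norm-nonincreasing). Every remaining term --- the commutators $[\partial^\alpha,a(\rheps)]\nabla\rheps$, the reaction terms, and the bounded nonlocal term --- is controlled by Moser and Kato--Ponce-type inequalities by a polynomial $P(\|\rheps\|_{\Hm{m}})$ times a subcritical power of the top-order seminorm, so the dissipation absorbs them by Young's inequality. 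This produces $\tfrac{\dd}{\dd t}\|\rheps\|_{\Hm{m}}^2 \le C\,P(\|\rheps\|_{\Hm{m}})$ with $C$ independent of $\eps$; a continuation argument then furnishes a single $T_m>0$ and a bound $M$ with $\|\rheps(t)\|_{\Hm{m}}\le M$ for all $\eps$ and $t\in[0,T_m]$. I would also note that the equation bounds $\partial_t\rheps$ uniformly in $\mathcal{C}^0([0,T_m];\Hm{m-2})$, and argue separately (using $\eta\ge0$ and the parabolic structure) that positivity of the initial datum is propagated, keeping $a(\rheps)$ bounded below.

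With these uniform bounds, Aubin--Lions--Simon compactness gives a subsequence converging strongly in $\mathcal{C}^0([0,T_m];\Hm{m'})$ for any $m'<m$ and weak-$*$ in $L^\infty([0,T_m];\Hm{m})$; choosing $d/2+2 < m' < m$, possible since $m>3+d$, lets me pass to the limit in every nonlinear term and in the mollifiers as $\eps\to0$, so the limit $\rho^*$ solves \sysaut. I would upgrade weak-$*$ to strong continuity into $\Hm{m}$, giving $\rho^*\in\mathcal{C}^0([0,T_m];\Hm{m})$, from the energy identity for the limit; the equation then places $\partial_t\rho^*$ in $\mathcal{C}^0([0,T_m];\Hm{m-2})$. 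Finally the Sobolev embedding $\Hm{m}\hookrightarrow\mathcal{C}^2(\dom)$, valid for $m>2+d/2$, delivers the claimed classical regularity, and uniqueness follows at once from Lemma \ref{thm:m3as_unique}. The main obstacle is the top-order energy estimate: because $\lap$ falls on the nonlinear quantity $\rho u$, one must simultaneously expose the sign-definite dissipation $-\delta\,a(\rho)|\nabla\partial^\alpha\rho|^2$ and verify that all the quasilinear commutator terms it is required to absorb are genuinely of strictly lower order, with constants uniform as $\eps\to0$.
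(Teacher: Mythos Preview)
Your proposal is correct and follows essentially the same route as the paper: mollify so that $\Aopaut$ is locally Lipschitz on $\Hm{m}$, obtain regularized solutions $\rheps$ by Picard, close an $\Hm{m}$ energy estimate uniformly in $\eps$ using the sign of $a(\rho)=u^+-\kappa M'(\rho)\ge u^+-u^->0$, extract a limit via Aubin--Lions, and finish with Sobolev embedding and Lemma \ref{thm:m3as_unique}.

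Two execution points differ and are worth flagging. First, positivity: your appeal to a maximum principle for the mollified equation is delicate because the diffusion acts on $\Jeps[\rheps]$ rather than $\rheps$ itself; the paper instead bounds $\|\rheps_t\|_{L^\infty}\le K^*$ directly from the equation (using $m>d^*$ and the uniform $\Hm{m}$ bound) and concludes $\rheps\ge\underline\rho_0-K^*t$, which is cruder but avoids the issue. Second, the upgrade to $\rho^*\in\mathcal{C}^0([0,T_m];\Hm{m})$: rather than an ``energy identity for the limit,'' the paper exploits the parabolic smoothing hidden in the discarded dissipation term to show $\rheps\in L^2([0,T_m];\Hm{m+1})$ and hence $\rheps\in\mathcal{C}^0((0,T_m];\Hm{m+1})$, so that Aubin--Lions already gives strong $\Hm{m}$ convergence for $t>0$; continuity at $t=0$ then follows from combining the upper bound $\|\rho^*(t)\|_{\Hm{m}}\le\sqrt{2E_m(t)}\to\|\rho_0\|_{\Hm{m}}$ with weak lower semicontinuity. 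Your sketch is compatible with this, but the paper's mechanism for strong continuity is the $\Hm{m+1}$ regularity gain rather than a limiting energy identity.
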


Establishing these results follows the steps below. See also Table \ref{tab:exec} for a more detailed summary.

\begin{enumerate}
\item We prove that regularized versions of \sysaut, smoothed by a parameter $\epsilon > 0$, have solutions $\rho^{(\epsilon)}$ that exist locally in time for an interval that could depend on $\epsilon$.
\item For large enough $m$ we then prove that the energy $\frac{1}{2} ||\rheps||_{\Hm{m}}^2$ is bounded by a function $E_m(t)$, which exists independently of $\rheps$ on some interval $[0,\varpi_m]$ where $\varpi_m$ does not depend upon $\epsilon.$ Additionally, we show that an interval $[0,T_m]$, with $T_m>0$ independent of $\eps$, exists such that all solutions $\rheps \in \mathcal{C}^1([0,T_m]; \Hm{m}).$
\item From the bounded energy, the Aubin-Lions-Dubinski{\u\i} lemma guarantees there is a subsequence converging strongly in $\mathcal{C}^0( [0,T_m]; \Hm{m'})$ for $m' < m$, $m$ and $m'$ large enough. We also establish a subsequence that converges weakly to an element of $L^\infty([0,T_m]; H^m(\dom)).$
\item From density and continuity arguments, we find a subsequence converging to some $\rho^* \in \mathcal{C}^0([0,T_m]; \Hm{m})$. 
\item Through convergence analysis, we show the limit does in fact solve \sysaut\ with appropriate initial conditions and belongs to suitable solution spaces.
\end{enumerate}

\begin{table}[h]
\centering
\begin{tabular}{l | l | p{4.5in} }
\bf Step & \bf Result & \bf Purpose \\
\hline
1 & Proposition \ref{prop:bounda} & establishes time derivative operator for regularized solutions $\rheps$ parameterized by $\eps$ is locally Lipschitz from $H^m(\dom) \to H^m(\dom)$ \\
\hline
& Theorem \ref{thm:localreg} & proves regularized solutions exist locally in time for initial data $\rho_0 \in \Hm{m}$ with $m>d/2$, where that time could depend on $\eps$ and $m$ \\
\hline
\hline
2 & Proposition \ref{prop:Ebound} & shows that for all $\eps>0$, the energy $\frac{1}{2} ||\rheps||_{\Hm{m}}^2$ is uniformly bounded on a time interval \\
\hline
& Proposition \ref{prop:red} & shows all regularized solutions exist up to a time $T_m$ independent of $\eps$ \\
\hline
\hline
3 & Lemma \ref{lem:weakConverge} & shows the sequence $\rheps$ has a subsequence converging weakly in $L^2([0,T_m]; \Hm{m})$ to a function $\rho^*$ in $L^\infty([0,T_m]; \Hm{m})$ \\
\hline
& Lemma \ref{lem:strongConverge} & shows the sequence $\rheps$ also has a subsequence converging strongly in $L^\infty([0,T_m]; \Hm{m'})$ for all $m'<m$ \\
\hline
\hline
4 & Proposition \ref{prop:HmHm} & shows the limit of the subsequence of $\rheps$, $\rho^* \in L^\infty([0,T_m]; \Hm{m})$ \\
\hline 
\hline
5 & Proposition \ref{prop:uniformC} & shows the sequence of derivatives $\rheps_t$ converges uniformly (needed to exchange limit and derivative) \\
\hline
& Theorem \ref{thm:local_exist} & gives the main result, that for $m>d+3$, classical solutions exist locally in time and they are in $\mathcal{C}^0([0,T_m]; \Hm{m}) \cap \mathcal{C}^1([0,T_m]; \Hm{m-2})$
\\
\hline
\end{tabular}
\caption{Summary of key stages in the proof of Theorem \ref{thm:local_exist}.}
\label{tab:exec}
\end{table}

\begin{remark}
While we establish local existence of classical solutions, proving global existence of solutions is beyond the scope of our work. Proving global existence would likely require first establishing global existence to the regularized systems and then showing this global existence is preserved in the limit. Through our work, we only establish local existence of the regularized solutions, with an interval of existence that may depend upon the initial condition (and solution regularity imposed) and we don't identify a natural way to ``glue" local regularized solutions together to establish global regularized solutions.
\end{remark}

\section{Theoretical Background}

\label{sec:theory}

This section provides some key definitions of tools (such as mollifiers) that are used in the proofs; states key results from known literature; and, for the case of our particular model system, various minor lemmas and computations are presented. Additional results relevant in establishing these results and the definitions of different solution spaces, etc., can be found in Appendix \ref{app:nom}. A reader more focused on the main steps in proving Theorem \ref{thm:local_exist} can move to Section \ref{sec:steps} but may wish to refer back to this section to fill in some details.

\subsection{Key Tools}

The two theorems below \cite{bertozzi} are useful for establishing existence of regularized solutions and in extending those solutions.

\begin{lemma}[Picard-Lindeloff for Autonomous ODE in Banach Space]
	\label{thm:picardb}	
	Let $O \subset B$ be an open subset of a Banach space $B$ and let $f: O \to B$ be a mapping that is locally Lipschitz, i.e., for any $v_0 \in O$, there is $L>0$ and a neighborhood $U_0$ of $v_0$ so that $||f(v_1)-f(v_2)||_X \leq L||v_1-v_2||_X$ for all $v_1, v_2 \in U_0.$ Then for any $v_0 \in O$, there is a time $T>0$ so that the initial value problem $v'(t) = f(v)$ with $v(0) = v_0$ has a unique solution $v \in \mathcal{C}^1([0,T); O).$
\end{lemma}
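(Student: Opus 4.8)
The statement is the classical Picard--Lindel\"of (Cauchy--Lipschitz) theorem transplanted to a Banach space, so the natural route is a contraction-mapping argument. The plan is to recast the initial value problem as a fixed-point equation for an integral operator and apply the Banach fixed-point theorem on a suitable complete metric space. The only real work is quantitative: pinning down how small $T$ must be so that the Picard iterate both stays inside $O$ and contracts.

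First I would fix $v_0 \in O$. Since $O$ is open, there is a closed ball $\overline{B}(v_0,r) \subset O$ on which $f$ is Lipschitz with constant $L$; the Lipschitz bound simultaneously furnishes a uniform bound $\|f(v)\| \leq \|f(v_0)\| + L r =: M$ for all $v \in \overline{B}(v_0,r)$. Next I would record the standard equivalence: a continuous $v$ solves $v'(t)=f(v(t))$ with $v(0)=v_0$ if and only if it solves the integral equation
\begin{equation}
v(t) = v_0 + \int_0^t f(v(s))\,\dd s,
\end{equation}
which follows from the fundamental theorem of calculus for Banach-space-valued functions together with continuity of $s \mapsto f(v(s))$. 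This reformulation is exactly what lets me get away with only Lipschitz (rather than $\mathcal{C}^1$) regularity of $f$.

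I would then define the Picard operator $\mathcal{P}$ by $(\mathcal{P}v)(t) := v_0 + \int_0^t f(v(s))\,\dd s$ on the complete metric space $X_T := \mathcal{C}^0([0,T]; \overline{B}(v_0,r))$ equipped with the supremum norm. Two estimates drive the argument: invariance, since $\|(\mathcal{P}v)(t) - v_0\| \leq M T \leq r$ whenever $T \leq r/M$, so that $\mathcal{P}$ maps $X_T$ into itself; and contraction, since $\sup_{t \leq T}\|(\mathcal{P}v_1)(t) - (\mathcal{P}v_2)(t)\| \leq L T \sup_{t \leq T}\|v_1(t) - v_2(t)\|$, so that $\mathcal{P}$ is a contraction once $T < 1/L$. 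Choosing $T = \min\{r/M,\, 1/(2L)\} > 0$, the Banach fixed-point theorem supplies a unique fixed point $v \in X_T$, which solves the integral equation and hence, again by the fundamental theorem of calculus, lies in $\mathcal{C}^1([0,T); O)$ and satisfies the ODE.

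For uniqueness beyond the ball (the fixed-point theorem only gives uniqueness within $X_T$), I would take two solutions $v, w$ with $v(0)=w(0)=v_0$, set $\phi(t) = \|v(t) - w(t)\|$, and apply Gr\"onwall: on any subinterval where both orbits remain in $\overline{B}(v_0,r)$ the integral formulation and the Lipschitz bound give $\phi(t) \leq L \int_0^t \phi(s)\,\dd s$, forcing $\phi \equiv 0$. The main (and essentially the only) obstacle is bookkeeping of the invariance constraint $T \leq r/M$, which guarantees the orbit never leaves $O$ where the hypotheses apply. As a cleaner variant, one may replace the supremum norm by the Bielecki weighted norm $\sup_{t \leq T}\e^{-2Lt}\|v(t)\|$, under which $\mathcal{P}$ contracts for every $T \leq r/M$; this removes the auxiliary $T < 1/L$ restriction and lets the invariance bound alone fix the existence time.
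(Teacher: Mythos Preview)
Your contraction-mapping argument is correct and is the standard proof of this result. Note, however, that the paper does not supply its own proof of this lemma: it is stated as a known result cited from the literature (Majda--Bertozzi, \emph{Vorticity and Incompressible Flow}), so there is no paper proof to compare against. Your write-up is essentially the textbook argument one would find in that reference or any treatment of ODEs in Banach spaces, and the quantitative choices $T \leq r/M$ for invariance and $T < 1/L$ for contraction are exactly right; the Bielecki-norm variant you mention is a nice touch but not needed here.
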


\begin{lemma}[Autonomous Extension]
	\label{thm:extb}	
	Let $O \subset B$ be an open subset of a Banach space $B$ and let $f: O \to B$ be a locally Lipschitz operator. Let $v \in \mathcal{C}^1([0,T); O)$ be the unique solution to the initial value problem $v'(t) = f(v)$ with $v(0) = v_0$ found by Lemma \ref{thm:picardb}. Then either the solution $v(t)$ exists globally in time or $T<\infty$ and $v$ leaves the open set $O$ as $t \uparrow T.$
\end{lemma}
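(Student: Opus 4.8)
The plan is to set up the maximal interval of existence and then prove the stated dichotomy by contradiction, extending the solution past $T$ whenever it fails to leave $O$. First I would define $T$ to be the supremum of all $\tau>0$ for which a $\mathcal{C}^1$ solution exists on $[0,\tau)$; Lemma \ref{thm:picardb} guarantees $T>0$, and its uniqueness clause ensures any two solutions agree on the overlap of their domains, so they glue unambiguously into a single maximal solution $v \in \mathcal{C}^1([0,T); O)$. If $T=\infty$ the first alternative holds and there is nothing to prove, so the substance is the case $T<\infty$, where I must show that $v$ leaves $O$ as $t \uparrow T$, which I interpret as: $v$ does not remain inside any single compact subset of $O$ for all of $[0,T)$ (this captures both blow-up of $\|v\|_B$ and approach to $\partial O$, since compact subsets of the open set $O$ are bounded and bounded away from the boundary).

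The core is the contrapositive. Suppose $T<\infty$ and, contrary to the claim, there is a compact $K \subset O$ with $v(t) \in K$ for every $t \in [0,T)$. A locally Lipschitz map is locally bounded (taking $v_2=v_0$ in the Lipschitz estimate on a bounded neighborhood gives $\|f(v_1)\|_B \leq \|f(v_0)\|_B + L\,\|v_1-v_0\|_B$), and by compactness a finite subcover yields a uniform bound $M := \sup_{w \in K}\|f(w)\|_B < \infty$. Since $v'(t)=f(v(t))$ and $v(t) \in K$, we get $\|v'(t)\|_B \leq M$ on $[0,T)$, hence $\|v(t)-v(s)\|_B \leq M|t-s|$. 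This shows $\{v(t)\}$ is Cauchy as $t \uparrow T$; by completeness of $B$ the limit $v_T := \lim_{t \uparrow T} v(t)$ exists, and since $K$ is closed, $v_T \in K \subset O$.

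I would then extend past $T$ using autonomy. Applying Lemma \ref{thm:picardb} at the interior point $v_T$ yields some $\tau>0$ and a solution $w \in \mathcal{C}^1([0,\tau); O)$ with $w(0)=v_T$. Because the equation is autonomous, the time-shifted function solves the same problem, so I define $\hat v(t)=v(t)$ on $[0,T)$, $\hat v(T)=v_T$, and $\hat v(t)=w(t-T)$ on $[T,T+\tau)$. The junction is $\mathcal{C}^1$: continuity of $f$ (implied by local Lipschitzness) gives $\hat v'(T^-)=f(v_T)=w'(0)=\hat v'(T^+)$, so $\hat v$ is a $\mathcal{C}^1$ solution on the strictly larger interval $[0,T+\tau)$, contradicting the maximality of $T$. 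This contradiction forces $v$ to leave $O$, completing the dichotomy.

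I expect the only genuinely delicate point to be the passage to the limit $v_T$: it hinges on converting the compactness hypothesis into a uniform bound on $f$ along the orbit (local Lipschitz $\Rightarrow$ locally bounded $\Rightarrow$ bounded on $K$), which then produces uniform continuity of $v$ and a Cauchy limit via completeness of $B$. The remaining ingredients — the gluing and the $\mathcal{C}^1$ matching at $T$ — are routine once autonomy is invoked, and are cleanest to verify through the equivalent integral formulation $\hat v(t) = v_0 + \int_0^t f(\hat v(s))\,\dd s$, which both pieces satisfy.
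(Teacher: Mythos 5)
The paper states this lemma without proof, importing it from the Majda--Bertozzi reference, so the comparison here is against the standard continuation argument. Your core mechanism is exactly that standard one, and its internal steps are sound: locally Lipschitz implies locally bounded, a finite subcover bounds $f$ on a compact $K$ by some $M$, hence $\|v(t)-v(s)\|_B \leq M|t-s|$ on $[0,T)$, so $v$ is Cauchy as $t \uparrow T$ with limit $v_T \in K \subset O$ by completeness; restarting via Lemma \ref{thm:picardb} at $v_T$ and gluing through the integral formulation then contradicts maximality. As a proof of the \emph{compact-exhaustion} dichotomy, this is correct.

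The genuine gap is your interpretation of ``$v$ leaves the open set $O$.'' You read it as ``$v$ remains in no single compact subset of $O$,'' justified by the parenthetical that compact subsets of $O$ are bounded and bounded away from $\partial O$ --- but that containment runs the wrong way. In this paper $B = H^m(\mathbb{T}^d)$ is infinite dimensional and $O_K$ is defined by norm inequalities; closed bounded sets at positive distance from the boundary are \emph{not} compact there, so escaping every compact set does not imply that $\|v\|_{H^m(\mathbb{T}^d)}$ approaches $K$ or that $\|1/v\|_{L^\infty(\mathbb{T}^d)}$ approaches $2/\underline{\rho}_0$. Proposition \ref{prop:red} invokes the lemma in precisely that stronger form: it derives a contradiction from ``either $\lim_{t \uparrow T_{K,\eps,m}} \|\rho^{(\eps)}\|_{H^m(\mathbb{T}^d)} = K$ or $\lim_{t \uparrow T_{K,\eps,m}} \|1/\rho^{(\eps)}\|_{L^\infty(\mathbb{T}^d)} = 2/\underline{\rho}_0$,'' and under your compact-set reading the solution could leave every compact set while staying at norm below $K/2$ and bounded below, so that argument would not close. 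Nor is the fix cosmetic in general: for an arbitrary locally Lipschitz $f$ on an infinite-dimensional Banach space the strong dichotomy can fail, since local Lipschitz constants (hence $\sup \|f\|$) may blow up along a bounded orbit away from $\partial O$. The correct repair is to run your Cauchy--limit--restart argument with ``compact $K$'' replaced by ``closed $S \subset O$ with $\mathrm{dist}(S, B \setminus O) > 0$ on which $f$ is bounded,'' and then to verify that boundedness from the structure of the operator rather than from compactness --- which this paper supplies through Remark \ref{rmk:Abd} and Proposition \ref{prop:bounda}, where $\Aopaut$ is bounded and Lipschitz on bounded subsets of $O_K$ bounded away from its defining constraints. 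With that substitution everything else in your write-up (uniform continuity, $v_T \in O$ from the positive distance to the complement, the autonomous time-shift, and the $\mathcal{C}^1$ junction) goes through verbatim, since compactness was used only to bound $f$.
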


The Aubin-Lions-Dubinski{\u\i} Lemma is used in extracting a strongly convergent subsequence of approximators \cite{aubin}.

\begin{lemma}[Aubin-Lions-Dubinski{\u\i}]
	\label{thm:aubin}
	Let $X$, $B$, and $Y$ be Banach spaces with $X \subset \subset B$ (compact embedding) and $B \hookrightarrow Y$ (continuous embedding). For $T>0$, let $U \subset L^p( [0,T] ; X )$ and let $V = \{ \dot u | u \in U \}$ be bounded in $L^p( [0,T]; X)$ and $L^q ( [0,T]; Y)$, respectively. Then if $1 < p < \infty$ and $q=1$ or $p = \infty$ and $q > 1$, $U \subset \subset L^p( [0,T]; B )$.
\end{lemma}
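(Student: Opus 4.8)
The plan is to prove this by the classical route for compactness in Bochner spaces: combine an Ehrling-type interpolation inequality — which exploits the gap between the compact embedding $X \subset\subset B$ and the continuous embedding $B \hookrightarrow Y$ — with a Riesz--Fr\'echet--Kolmogorov (Simon) compactness criterion for subsets of $L^p([0,T];B)$, using the derivative bound on $V$ to control translations in time. Let $M$ denote the uniform bound for $U$ in $L^p([0,T];X)$ and $N$ the uniform bound for $V=\{\dot u\}$ in $L^q([0,T];Y)$.

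First I would establish Ehrling's lemma: for every $\eta>0$ there is a constant $C_\eta$ with $\|v\|_B \le \eta\|v\|_X + C_\eta\|v\|_Y$ for all $v\in X$. The argument is by contradiction — if it failed there would be a sequence $v_n$ with $\|v_n\|_B=1$, $\|v_n\|_X$ bounded, and $\|v_n\|_Y\to 0$; the compactness of $X\subset\subset B$ extracts a $B$-convergent subsequence whose limit has unit $B$-norm yet vanishes in $Y$, contradicting injectivity of $B\hookrightarrow Y$. Next I would derive the time-equicontinuity estimate. Writing $u(t+h)-u(t)=\int_t^{t+h}\dot u(s)\,\dd s$ and measuring in $Y$, H\"older's inequality gives $\|u(t+h)-u(t)\|_Y \le h^{1-1/q}\|\dot u\|_{L^q([t,t+h];Y)}$ when $q>1$; integrating in $t$ bounds $\|u(\cdot+h)-u\|_{L^p([0,T-h];Y)}$ by a power of $h$ times $N$, so these $Y$-translations vanish uniformly over $u\in U$ as $h\downarrow 0$.

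I would then assemble the two ingredients. Applying Ehrling pointwise in time and taking $L^p$ norms yields $\|u(\cdot+h)-u\|_{L^p([0,T-h];B)} \le \eta\,\|u(\cdot+h)-u\|_{L^p([0,T-h];X)} + C_\eta\,\|u(\cdot+h)-u\|_{L^p([0,T-h];Y)}$, where the first term is at most $2\eta M$ by the $L^p(X)$ bound and the second is small by the translation estimate; choosing $\eta$ small first and then $h$ small gives uniform smallness of $B$-translations over $U$. Simultaneously, each time-average $\int_{t_1}^{t_2}u(s)\,\dd s$ is bounded in $X$ uniformly over $u\in U$ (H\"older against the $L^p(X)$ bound), hence precompact in $B$ by the compact embedding. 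Simon's characterization of relatively compact subsets of $L^p([0,T];B)$ — precompactness of time-averages in $B$ together with uniform decay of time-translations — then delivers $U\subset\subset L^p([0,T];B)$.

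The main obstacle is the borderline exponents, which is exactly where this statement refines the classical Aubin--Lions lemma. The clean factor $h^{1-1/q}$ degenerates at $q=1$, so for the case $q=1$ (with $p<\infty$) the translation estimate must instead come from uniform integrability / uniform absolute continuity of the integrals of the family $\{\dot u\}$ in $L^1([0,T];Y)$, which still forces $\|u(\cdot+h)-u\|_{L^p([0,T-h];Y)}\to 0$ uniformly. For the complementary case $p=\infty$ (with $q>1$), the target is genuine equicontinuity in $C([0,T];B)$, so the $L^p$ Riesz--Kolmogorov criterion is replaced by an Arzel\`a--Ascoli argument: the averages give pointwise precompactness in $B$ and the translation estimate gives equicontinuity, now in the sup-norm. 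Reconciling these two regimes under a single conclusion is the delicate part, and splitting the proof along the hypothesis $(1<p<\infty,\,q=1)$ versus $(p=\infty,\,q>1)$ is how I would organize it.
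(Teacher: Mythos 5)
You should first note a mismatch of expectations: the paper does not prove this lemma at all --- it is imported verbatim from the literature (the citation is to Chen--J\"ungel--Liu's note on Aubin--Lions--Dubinski{\u\i} lemmas, which in turn rests on Simon's compactness theorems), so there is no in-paper proof to compare against. Measured against the standard literature proof, your architecture is exactly the classical one: Ehrling's interpolation inequality (your contradiction argument for it is correct), the identity $u(t+h)-u(t)=\int_t^{t+h}\dot u(s)\,\dd s$ to control time translations through the $L^q(Y)$ bound on $V$, Simon's translation--plus--averages criterion for relative compactness in $L^p([0,T];B)$, and an Arzel\`a--Ascoli argument in $C([0,T];B)$ for the branch $p=\infty$, $q>1$. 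That branch is sound as you sketch it: $\sup_t\|u(t+h)-u(t)\|_Y\le h^{1-1/q}N$, Ehrling upgrades this to $B$-equicontinuity, and the averages $\frac1h\int_t^{t+h}u\,\dd s$ (bounded in $X$, hence precompact in $B$) combined with equicontinuity give pointwise precompactness.

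The genuine gap is in the endpoint case $1<p<\infty$, $q=1$. You propose to obtain the uniform decay of $\|u(\cdot+h)-u\|_{L^p([0,T-h];Y)}$ from ``uniform integrability / uniform absolute continuity'' of the family $\{\dot u\}$ in $L^1([0,T];Y)$. But boundedness of a family in $L^1$ does \emph{not} imply its uniform integrability --- take $\dot u_n = n\,\chi_{[0,1/n]}\,y_0$, which is bounded in $L^1([0,T];Y)$ and concentrates --- so this step has no support in the hypotheses and, as stated, fails. The correct mechanism (and what Simon actually uses) needs no integrability refinement: Fubini gives the translation bound in the $L^1$-in-time norm, $\int_0^{T-h}\|u(t+h)-u(t)\|_Y\,\dd t \le h\,\|\dot u\|_{L^1([0,T];Y)} \le hN$, since each $s$ lies in a $t$-window of length at most $h$; then one observes that $U$ is bounded in $C([0,T];Y)$, via $\|u(t)\|_Y \le \frac1T\|u\|_{L^1([0,T];Y)}+\|\dot u\|_{L^1([0,T];Y)}$ together with $\|u\|_{L^1([0,T];Y)} \lesssim \|u\|_{L^p([0,T];X)} \le M$; finally the Lebesgue interpolation $\|w\|_{L^p(Y)} \le \|w\|_{L^\infty(Y)}^{1-1/p}\,\|w\|_{L^1(Y)}^{1/p}$ converts the $O(h)$ smallness in $L^1$ into $O(h^{1/p})$ smallness in $L^p$ uniformly over $U$ --- precisely the input your Ehrling step requires, and note that strictness $p<\infty$ is what keeps the exponent $1/p$ positive (at $p=\infty$, $q=1$ the conclusion genuinely fails, which is why the hypotheses split). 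With this substitution, the rest of your proof closes as written.
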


We also require the extraction of weakly convergent subsequences \cite{brezis2010functional}:

\begin{lemma}[Weak Convergence of Bounded Sequences on Reflexive Banach Spaces]
	\label{thm:alao}
	Assume that $E$ is a reflexive Banach space and let $\{f_n\}$ be a bounded sequence in $E$ for $n=0,1,2,...$. Then there is a subsequence of $\{f_n\}$ converging in the weak topology of $E$.
\end{lemma}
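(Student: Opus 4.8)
The plan is to reduce the problem to a separable closed subspace of $E$ and then run a diagonal extraction against a countable dense family of functionals, invoking reflexivity only at the very end to identify the weak limit. The point is that the weak topology on an infinite-dimensional reflexive space need not be metrizable, so one cannot directly extract a sequential limit from topological compactness of the ball; but once we pass to the closed span of the (countable) sequence the relevant dual becomes separable, and that is all a diagonal argument requires.

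First I would set $M := \overline{\mathrm{span}}\{f_n : n \geq 0\}$, the norm-closure of the linear span of the sequence. Since the sequence is countable, $M$ is a separable Banach space, and, being a closed subspace of the reflexive space $E$, it is itself reflexive (a standard stability property: closed subspaces of reflexive spaces are reflexive). The next ingredient is that a separable reflexive space has a separable dual: since $M$ is separable and reflexive we have $M^{**} = M$ separable, and the standard implication that a normed space whose dual is separable is itself separable, applied to $M^*$ (whose dual is $M^{**} = M$), yields separability of $M^*$. Fix then a countable dense set $\{g_j : j \geq 1\} \subset M^*$.

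Next I would perform the diagonal extraction. For each fixed $j$ the scalar sequence $\{\langle g_j, f_n\rangle\}_n$ is bounded by $\|g_j\|\,\sup_n\|f_n\| < \infty$, hence has a convergent subsequence; passing to successive subsequences and taking the diagonal yields a single subsequence $(f_{n_k})$ for which $\langle g_j, f_{n_k}\rangle$ converges for every $j$. A standard $3\epsilon$ estimate, combining density of $\{g_j\}$ in $M^*$ with the uniform bound $\sup_k\|f_{n_k}\|<\infty$, upgrades this to convergence of $\langle g, f_{n_k}\rangle$ for \emph{every} $g \in M^*$. The limit defines a linear functional $\Phi: M^* \to \mathbb{R}$ with $|\Phi(g)| \leq (\sup_n\|f_n\|)\|g\|$, so $\Phi \in M^{**}$. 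This is the only place reflexivity of $M$ enters: $\Phi = \hat f$ for some $f \in M$, so that $\langle g, f_{n_k}\rangle \to \langle g, f\rangle$ for all $g \in M^*$, i.e.\ $f_{n_k} \rightharpoonup f$ in $\sigma(M, M^*)$. Finally this lifts to $E$ at no cost: given any $\Lambda \in E^*$, its restriction $\Lambda|_M$ lies in $M^*$, whence $\langle \Lambda, f_{n_k}\rangle = \langle \Lambda|_M, f_{n_k}\rangle \to \langle \Lambda|_M, f\rangle = \langle \Lambda, f\rangle$, which is weak convergence in $\sigma(E,E^*)$.

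The main obstacle is conceptual rather than computational: one must avoid pretending the weak topology is sequential or metrizable on all of $E$. The reduction to the separable subspace $M$ is precisely what circumvents this, trading metrizability of the weak topology for a countable dense family of test functionals, while the structural facts (closed subspaces of reflexive spaces are reflexive, and separable reflexive spaces have separable duals) are what make that reduction legitimate. An alternative, more topological route would invoke Kakutani's theorem (the closed ball of a reflexive space is weakly compact) together with the Eberlein--{\v S}mulian theorem (weak compactness equals weak sequential compactness in a Banach space); I prefer the diagonal argument above because it is self-contained and isolates the single essential use of reflexivity.
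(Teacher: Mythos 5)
Your proof is correct, but note that the paper does not actually prove Lemma \ref{thm:alao} at all: it is quoted as a known result from \cite{brezis2010functional} (where it appears as Theorem 3.18), and no proof is transcribed anywhere in the paper or its appendices. The textbook proof behind that citation begins exactly as yours does --- pass to $M = \overline{\mathrm{span}}\{f_n\}$, observe that $M$ is separable and, being a closed subspace of a reflexive space, reflexive, and deduce that $M^*$ is separable --- but then finishes topologically: the closed unit ball of $M$ is weakly compact by Kakutani's theorem, and since $M^*$ is separable the weak topology on that ball is metrizable, so compactness yields sequential compactness directly. You instead replace the compactness-plus-metrizability step with an explicit diagonal extraction against a countable dense family $\{g_j\} \subset M^*$, a $3\epsilon$ density upgrade to convergence of $\langle g, f_{n_k}\rangle$ for every $g \in M^*$, and a single invocation of reflexivity to identify the limit functional $\Phi \in M^{**}$ with a point $f \in M$; the lift back to $\sigma(E,E^*)$ via restriction of functionals is also handled correctly (no Hahn--Banach surjectivity is needed, only that $\Lambda|_M \in M^*$). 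Each structural ingredient you use (closed subspaces of reflexive spaces are reflexive; a space with separable dual is separable, applied to $M^*$ via $M^{**}=M$) is standard and correctly deployed, and your caution about the non-metrizability of the weak topology on all of $E$ is exactly the right conceptual point. What your route buys is self-containedness --- it avoids Banach--Alaoglu/Tychonoff and Eberlein--\v{S}mulian entirely --- at the modest cost of re-deriving sequential compactness by hand; the citation route is shorter if one already has the weak-$*$ compactness machinery available, which is presumably why the paper simply cites it.
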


\begin{remark}
In our application, we will be working primarily with Hilbert spaces, which are reflexive.
\end{remark}

\subsection{Basic Tools}

Let $\epsilon>0$. For $1 \leq p \leq \infty$, we define the mollification operator $\Jeps$ acting on a function $f \in L^p(\dom)$ via \begin{equation}(\Jeps [f])(x) = \sum_{k \in \mathbb{Z}^d} \e^{-\epsilon |k|^2 + 2 \pi i k \cdot x} \hat{f}(k) \label{eq:moll} \end{equation} where the discrete Fourier transform of $f$ at $k \in \mathbb{Z}^d$ is \begin{equation} \hat{f}(k) = \int_\dom \e^{-2 \pi i k \cdot x} f(x) \dd x. \label{eq:fhat} \end{equation}

The mollifier $\Jeps$ has many useful properties. We summarize important properties below \cite{greer} and transcribe proofs in section \ref{app:greer} of the Appendix.

\begin{lemma}[Mollifier Properties]
	\label{lem:moll}
	Let $d$ be the number of spatial dimensions. Given the $\Jeps$ operator defined in equation \eqref{eq:moll} we have:	
	\begin{enumerate}
		\item Let $\ell > d/2.$ Then for all $v \in \mathcal{C}^{\ell}(\dom)$, 
 $\Jeps [v] \rightarrow v$ uniformly and \\ $||\Jeps [v]||_{L^\infty(\dom)} \leq ||v||_{L^\infty(\dom)}.$ 
		\item Mollifiers commute with weak derivatives: $\forall m \in \mathbb{N} \cup \{0\}, \forall v \in H^m, \forall |\alpha| \leq m,$ $$\partial^\alpha ( \Jeps [v]) = \Jeps [\partial^\alpha v].$$
		\item For all $v, w \in \Lp{2}$, $$\int_\dom (\Jeps [v])w \dd x = \int_\dom (\Jeps [w]) v \dd x.$$ \\
		\item For all $f \in \Hm{m}$, 
		\begin{align*}
		\lim_{\epsilon \downarrow 0} ||\Jeps [f] - f||_{\Hm{m}} &= 0 \\
		||\Jeps [f] - f||_{\Hm{m-1}} &\leq \epsilon ||f||_{\Hm{m}}.
		\end{align*}
		\item Let $m > d/2$. For all $f \in \Hm{m}, \nu \in \mathbb{N} \cup \{0\}, \ell \in \{0,1,...,\nu \}, \epsilon > 0$,
		\begin{align*}
		||\Jeps [f]||_{\Hm{m+\nu}} &\lesssim_{m,\nu} \epsilon^{-\nu} ||f||_{\Hm{m}} \\
		||\Jeps [D^\nu f]||_{\Lp{\infty}} &\lesssim_{\nu} \epsilon^{m-\nu-d/2} ||f||_{\Hm{\ell}}. 
		\end{align*}
	\end{enumerate}
\end{lemma}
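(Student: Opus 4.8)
The plan is to treat every claim through the single observation that $\Jeps$ is the Fourier multiplier operator with symbol $\widehat{m}_\eps(k) = \e^{-\eps|k|^2}$; equivalently, it is convolution on $\dom$ against the periodized heat kernel $G_\eps(x) = \sum_{k \in \mathbb{Z}^d} \e^{-\eps|k|^2} \e^{2\pi i k \cdot x}$. First I would record the basic features of this symbol: it is real, even in $k$, satisfies $0 < \e^{-\eps|k|^2} \le 1$, and (by Poisson summation, using that the continuous Fourier transform of a Gaussian is a positive Gaussian) $G_\eps \ge 0$ with $\int_\dom G_\eps \, \dd x = \widehat{G_\eps}(0) = 1$. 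Throughout, Parseval's identity lets me rewrite each Sobolev norm as a weighted $\ell^2$ sum over $k \in \mathbb{Z}^d$ with weight $(1+|k|^2)^m$, which converts every assertion into an elementary statement about the multiplier tested against polynomial weights.

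With this setup the two algebraic properties are immediate. For Property 2, differentiation and mollification act on the $k$-th Fourier coefficient by multiplication by $(2\pi i k)^\alpha$ and by $\e^{-\eps|k|^2}$ respectively; these scalar factors commute, so $\partial^\alpha \Jeps[v]$ and $\Jeps[\partial^\alpha v]$ have identical Fourier coefficients. For Property 3, Parseval turns $\int_\dom (\Jeps[v]) w \, \dd x$ into a sum weighted by the symbol, and since $\e^{-\eps|k|^2}$ is real and even the sum is unchanged under interchanging the roles of $v$ and $w$. For the first half of Property 1, Young's inequality with the positive, unit-mass kernel gives $\|\Jeps[v]\|_{\Lp{\infty}} = \|G_\eps * v\|_{\Lp{\infty}} \le \|G_\eps\|_{\Lp{1}} \|v\|_{\Lp{\infty}} = \|v\|_{\Lp{\infty}}$; the uniform convergence $\Jeps[v] \to v$ follows because $\ell > d/2$ makes the Fourier coefficients of $v \in \mathcal{C}^\ell(\dom) \subset \Hm{\ell}$ absolutely summable (Cauchy--Schwarz against the summable weight $(1+|k|^2)^{-\ell}$), so that $|\Jeps[v] - v| \le \sum_k (1 - \e^{-\eps|k|^2}) |\widehat{v}(k)|$ has an $\eps$-independent summable majorant while each summand tends to $0$, and the limit passes inside by dominated convergence for series.

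The quantitative estimates in Properties 4 and 5 are where the real work lies, but each reduces to a pointwise bound on the symbol. Writing the squared norms as weighted sums, $\|\Jeps[f] - f\|_{\Hm{s}}^2 = \sum_k (1+|k|^2)^s (1 - \e^{-\eps|k|^2})^2 |\widehat{f}(k)|^2$, the convergence in $\Hm{m}$ is again dominated convergence (the symbol factor is bounded by $1$ and vanishes as $\eps \downarrow 0$), while the rate follows from the elementary inequalities $0 \le 1 - \e^{-\eps|k|^2} \le \min\{1, \eps|k|^2\}$ together with the interpolation $\min\{1,a\} \le a^{\theta}$ for $\theta \in [0,1]$, which trades one polynomial weight for the appropriate power of $\eps$. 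For the gain-of-regularity bounds I would use $\sup_{t \ge 0} t^{\nu} \e^{-t} = (\nu/\e)^{\nu}$ to estimate $(1+|k|^2)^{\nu} \e^{-2\eps|k|^2} \lesssim_{\nu} \eps^{-\nu}$ pointwise, giving $\|\Jeps[f]\|_{\Hm{m+\nu}} \lesssim_{m,\nu} \eps^{-\nu} \|f\|_{\Hm{m}}$ directly from the weighted sum; the $\Lp{\infty}$ bound then comes from Cauchy--Schwarz on the Fourier series (or Sobolev embedding) together with comparison of the Gaussian sum $\sum_k |k|^{2\nu} \e^{-2\eps|k|^2}$ to its integral, which scales like a negative power of $\eps$ determined by the dimension $d$. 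The main obstacle I anticipate is bookkeeping: matching the exact powers of $\eps$ against the polynomial weights so that the stated exponents come out, and in particular being careful about the scaling convention relating $\eps$ in the symbol to the claimed rates, as well as justifying the positivity and unit mass of $G_\eps$ that underpin the $\Lp{\infty}$ contraction. Once the symbol inequalities are pinned down, each estimate is a one-line consequence of Parseval.
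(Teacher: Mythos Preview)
Your proposal is correct and follows essentially the same Fourier-multiplier/Parseval route as the paper: both reduce every item to pointwise bounds on the symbol $\e^{-\eps|k|^2}$ against polynomial weights, use absolute summability of the Fourier series for the uniform convergence in Property~1, and handle the $\Lp{\infty}$ estimate in Property~5 via Cauchy--Schwarz on the Fourier side together with a bound on the Gaussian sum. The only cosmetic difference is your justification of the $\Lp{\infty}$ contraction in Property~1 via Young's inequality with the nonnegative unit-mass periodized heat kernel, whereas the paper phrases the same fact as the maximum principle for the heat equation on $\dom$ at time $\eps$; and your caution about the $\eps$-scaling is well placed, since the paper's proof in fact uses $\e^{-\eps^2|k|^2}$ while the definition uses $\e^{-\eps|k|^2}$.
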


Often intermediate calculations make use of Sobolev spaces \cite{crime,maz2013sobolev}:

\begin{lemma}[Sobolev Inequalities and $L^p$ properties] $ $
	\label{lem:sobin}
	Let $\Omega$ be a subset of $\mathbb{R}^d$.
	\begin{enumerate}
		\item For all $m > d/2$, $H^m(\Omega)$ is a Banach algebra, i.e., for all $u, v \in H^m(\Omega)$:
		$$||u v||_{H^m(\Omega)} \lesssim ||u||_{H^m(\Omega)} ||v||_{H^m(\Omega)}.$$
		\item (Gagliardo-Nirenberg Interpolation Inequality): let $f : \Omega \to \mathbb{R}$ and let $1 \leq q, r \leq \infty$. Let $\alpha$ be a real number and $j \in \mathbb{N}$ such that
		\begin{align*}
		\frac{1}{p} &= \frac{j}{d} + (\frac{1}{r} - \frac{m}{d}) \alpha + \frac{1 - \alpha}{q} \\
		\frac{j}{m} &\leq \alpha \leq 1.
		\end{align*}
		Then there exists $C>0$ so that $$||D^j f||_{L^p(\Omega)} \leq C ||D^m f||_{L^r(\Omega)}^\alpha ||f||_{L^q(\Omega)}^{1-\alpha}.$$
		\item (Generalized H\"{o}lder's inequality): if $f_i \in L^{p_i}(\Omega)$ for $i=1,...,N$ and $\frac{1}{r} = \sum_{i=1}^N \frac{1}{p_i}$ then $||\prod_{i=1}^N f_i||_{L^r(\Omega)} \leq \prod_{i=1}^N ||f_i||_{L^{p_i}(\Omega)}.$
	\item When $m > m'$, we have that $H^m(\Omega) \subset \subset H^{m'}(\Omega)$ is a compact embedding, i.e., every bounded sequence in $H^m(\Omega)$ has an $H^{m'}(\Omega)$-convergent subsequence.
	\item (H\"{o}lder Embedding) The space $H^m(\Omega) \subset \mathcal{C}^{k,\alpha}(\Omega)$ for $m \geq \alpha + k + d/2.$
	\end{enumerate}
	
\end{lemma}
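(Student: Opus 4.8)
The statement to be established is the collection of Sobolev inequalities and $L^p$ properties over $\Omega \subset \mathbb{R}^d$. These are all classical results, so the plan is to assemble them from standard tools rather than invent new machinery; the organizing principle is that items (2)--(5) are the genuine analytic content, while (1) is a corollary of (2) together with the embedding in (5). I would treat (5), the H\"older embedding $H^m(\Omega) \subset \mathcal{C}^{k,\alpha}(\Omega)$ for $m \geq \alpha+k+d/2$, as the foundational fact, proving it first via the Fourier-analytic argument on the torus: for $f \in H^m$ with $m > k + d/2$, the coefficients $\hat f(\xi)$ decay fast enough that $\sum_\xi |\xi|^{k} |\hat f(\xi)|$ converges by Cauchy--Schwarz against $\sum_\xi |\xi|^{2k}(1+|\xi|^2)^{-m} < \infty$, giving absolutely convergent derivative series up to order $k$ and the stated H\"older modulus on the remainder. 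I would then deduce the Banach algebra property (1) from (5): writing $D^\alpha(uv) = \sum_{\beta \leq \alpha} \binom{\alpha}{\beta} D^\beta u\, D^{\alpha-\beta} v$ via Leibniz, each product term is controlled in $L^2$ by putting the lower-order factor in $L^\infty$ (via the embedding, since $m > d/2$) and the higher-order factor in $L^2$, then interpolating; summing over $|\alpha| \leq m$ yields $\|uv\|_{H^m} \lesssim \|u\|_{H^m}\|v\|_{H^m}$.

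For the Gagliardo--Nirenberg inequality (2), I would state the scaling-invariant form and reduce to it by the standard two-step argument: first establish the base case $j=1$, $m=1$ (or the Nirenberg $L^p$--$L^q$ interpolation for a single derivative) on $\reals^d$, then iterate on the order of differentiation and invoke an extension operator to transfer from $\reals^d$ to the bounded domain $\Omega$ (on $\Td$ this transfer is automatic by periodicity). The dimensional-analysis constraint relating $p, r, q, m, j, \alpha$ is exactly what makes the estimate scale-invariant, and I would verify that the hypotheses as stated are precisely the admissibility condition. The generalized H\"older inequality (3) follows by induction on $N$ from the two-factor H\"older inequality, applied with exponents $r$ and $r' = (\sum_{i=2}^N 1/p_i)^{-1}$ and so on; this step is elementary and I would dispatch it quickly. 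The compact embedding (4), $H^m \subset\subset H^{m'}$ for $m > m'$, I would obtain from Rellich--Kondrachov: boundedness in $H^m$ gives, after passing to a subsequence, weak convergence in $H^m$ and, via the compactness of $H^m \hookrightarrow H^{m'}$ on a bounded Lipschitz domain (again immediate on the torus through the Fourier tail estimate, since a ball in $H^m$ is totally bounded in $H^{m'}$), strong $H^{m'}$-convergence of that subsequence.

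The step I expect to demand the most care is the Gagliardo--Nirenberg inequality (2), both because its admissibility condition has several exceptional cases (for instance when $r > 1$ and $m - j - d/r$ is a nonnegative integer, where the upper endpoint $\alpha = 1$ must be excluded) and because transferring the sharp inequality from $\reals^d$ to a general bounded $\Omega$ requires either a controlled extension operator or an explicit use of the periodic structure. Since the paper applies this lemma only on $\dom$, I would emphasize the toroidal case, where every one of these facts admits a clean Fourier-series proof and the extension subtleties evaporate; for a general $\Omega$ I would simply cite the standard references \cite{maz2013sobolev} and note that the domain is assumed sufficiently regular. The remaining items are then either direct Fourier estimates or routine consequences of the two-factor H\"older inequality, so the bulk of the work concentrates in establishing (5) and (2) cleanly and letting (1), (3), (4) follow.
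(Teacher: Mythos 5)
The paper never actually proves this lemma: it is stated as imported background with citations \cite{crime,maz2013sobolev}, and only its consequences (Corollaries \ref{cor:hjm} and \ref{cor:okprod} in the appendix) receive proofs. So your proposal is not diverging from an argument in the paper --- it is supplying one where the author chose to cite. As a reconstruction of the classical proofs it is sound in outline: the Fourier-series route to the H\"older embedding on $\dom$, induction from two-factor H\"older for item (3), total boundedness of $\Hm{m}$-balls in $\Hm{m'}$ via Fourier tail estimates for item (4), and scaling plus an extension operator (or periodicity) for Gagliardo--Nirenberg are all the standard arguments; you also correctly flag the Gagliardo--Nirenberg endpoint cases and the domain-regularity caveat --- the lemma's blanket ``$\Omega$ a subset of $\mathbb{R}^d$'' is indeed too generous for (4) and (5) on arbitrary domains, and the paper only ever applies the lemma on the torus, where your Fourier proofs apply verbatim.

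One step of your outline is stated loosely enough that it would fail if executed as written: for the Banach algebra property (1) you propose to control each Leibniz term $\partial^\beta u \, \partial^{\alpha-\beta} v$ by ``putting the lower-order factor in $L^\infty$.'' For intermediate orders --- say $|\beta| \approx m/2$ with $d$ large, so that $|\beta| > m - d/2$ --- neither factor embeds in $\Lp{\infty}$, and the estimate as described breaks. The standard repair, which your trailing ``then interpolating'' gestures at but does not spell out, is to use item (2) with $q=\infty$, $r=2$, $\alpha = j/m$ to get $||D^j w||_{L^{2m/j}(\Omega)} \lesssim ||w||_{H^m(\Omega)}$ for $m>d/2$ (exactly the paper's Corollary \ref{cor:hjm}), then apply item (3) with exponents $2m/|\beta|$ and $2m/(|\alpha|-|\beta|)$, whose reciprocals sum to at most $1/2$. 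In particular (1) is a consequence of (2) and (3), not of (5), so your stated dependency ordering should be adjusted. A second, smaller joint of the same kind: absolute convergence of $\sum_{\xi} |\xi|^{j} |\hat f(\xi)|$ yields $\mathcal{C}^k$ control but not the H\"older seminorm in the borderline case $m = k+\alpha+d/2$; there you need the frequency-splitting bound $|\e^{2\pi i \xi \cdot x} - \e^{2\pi i \xi \cdot y}| \leq \min(2, 2\pi|\xi|\,|x-y|)$. Both repairs are routine with tools you already invoke, so these are loose joints rather than fatal gaps, but they should be tightened before the sketch counts as a proof.
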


\begin{lemma}
	\label{lem:compm}
	Let $K>0$. Let $f : \mathcal{D} \subset \mathbb{R} \to \mathbb{R}$ and $g : \Omega \subset \mathbb{R}^d \to \mathcal{D}$ with $f \in \mathcal{C}^m(\mathcal{D})$, $||f||_{\mathcal{C}^m(\mathcal{D})} \leq K$, $\Omega$ bounded, and $g \in H^m(\Omega)$ for an integer $m >d/2$. Suppose that $f(0) = 0$ then 
\begin{enumerate}
\item $$||f \circ g||_{H^m(\Omega)} \lesssim_f ||g||_{H^m(\Omega)} + ||g||_{H^m(\Omega)}^{m}.$$ 
\item Furthermore, if $m>d+2$, then $$||f \circ g||_{H^m(\Omega)} \lesssim_f (1 + \ul{B} ) ||g||_{H^m(\Omega)} \lesssim (1 + \bar B) ||g||_{\Hm{m}}$$ where $\ul{B} = ||g||_{\mathcal{C}^{b^*}(\Omega)}^{m-1} \lesssim ||g||_{H^m(\Omega)}^{m-1} < \infty$, $\bar {B} = ||g||_{\mathcal{C}^{b^*}(\Omega)}^{m+1} \lesssim ||g||_{H^m(\Omega)}^{m+1} < \infty$, and $b^*$ is the largest integer less than or equal to $m/2+1.$
\end{enumerate} 
\end{lemma}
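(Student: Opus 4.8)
\section*{Proof proposal}

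The plan is to treat this as a \emph{Moser-type composition estimate} and prove it by combining the Fa\`a di Bruno formula with the interpolation and embedding results of Lemma~\ref{lem:sobin}. Since $\|f\circ g\|_{H^m(\Omega)}^2=\sum_{|\alpha|\le m}\|\partial^\alpha(f\circ g)\|_{L^2(\Omega)}^2$, it suffices to bound $\|\partial^\alpha(f\circ g)\|_{L^2(\Omega)}$ for each multi-index with $|\alpha|\le m$. For $\alpha=0$ I would use $f(0)=0$ together with $\|f'\|_{L^\infty}\le K$ to get the pointwise bound $|f(g(x))|\le K|g(x)|$, whence $\|f\circ g\|_{L^2(\Omega)}\le K\|g\|_{L^2(\Omega)}\le K\|g\|_{H^m(\Omega)}$ (finite since $\Omega$ is bounded). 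For $1\le|\alpha|\le m$ the Fa\`a di Bruno formula writes $\partial^\alpha(f\circ g)$ as a finite sum of terms of the form $(f^{(k)}\circ g)\prod_{j=1}^k\partial^{\beta_j}g$, where $1\le k\le|\alpha|$, each $|\beta_j|\ge1$, and $\sum_{j=1}^k|\beta_j|=|\alpha|$. The factor $f^{(k)}\circ g$ is bounded in $L^\infty$ by $\|f\|_{\mathcal{C}^m(\mathcal{D})}\le K$, so the task reduces to estimating $\|\prod_{j=1}^k\partial^{\beta_j}g\|_{L^2(\Omega)}$.

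For part (1) I would apply the generalized H\"older inequality (Lemma~\ref{lem:sobin}.3) with exponents $p_j=2|\alpha|/|\beta_j|$, which satisfy $\sum_j p_j^{-1}=\tfrac12$, giving $\|\prod_j\partial^{\beta_j}g\|_{L^2}\le\prod_j\|D^{|\beta_j|}g\|_{L^{p_j}}$. To each factor I apply the Gagliardo--Nirenberg inequality (Lemma~\ref{lem:sobin}.2) with top order $|\alpha|$, $r=2$, $q=\infty$, and interpolation parameter $\theta_j=|\beta_j|/|\alpha|$; this choice exactly satisfies the index identity and the admissibility range $|\beta_j|/|\alpha|\le\theta_j\le1$ (with equality on the lower end), yielding $\|D^{|\beta_j|}g\|_{L^{p_j}}\lesssim\|D^{|\alpha|}g\|_{L^2}^{\theta_j}\|g\|_{L^\infty}^{1-\theta_j}$. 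Multiplying over $j$ and using $\sum_j\theta_j=1$ collapses the product to $\|D^{|\alpha|}g\|_{L^2}\,\|g\|_{L^\infty}^{k-1}\le\|g\|_{H^m(\Omega)}\,\|g\|_{L^\infty}^{k-1}$, and the embedding $\|g\|_{L^\infty}\lesssim\|g\|_{H^m(\Omega)}$ (Lemma~\ref{lem:sobin}.5, valid since $m>d/2$) turns each term into $\|g\|_{H^m(\Omega)}^k$ with $1\le k\le m$. Summing the finitely many terms and using $\sum_{k=1}^m t^k\lesssim t+t^m$ gives the stated $\|g\|_{H^m(\Omega)}+\|g\|_{H^m(\Omega)}^m$ bound.

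For part (2), under $m>d+2$, I would instead distribute the derivatives asymmetrically: in each product $\prod_{j=1}^k\partial^{\beta_j}g$ pick a factor of largest order (say $\beta_1$), keep it in $L^2$, and place the remaining $k-1$ factors in $L^\infty$, so that $\|(f^{(k)}\circ g)\prod_j\partial^{\beta_j}g\|_{L^2}\le K\|g\|_{H^m(\Omega)}\prod_{j\ge2}\|D^{|\beta_j|}g\|_{L^\infty}$. The key combinatorial observation is that, $\beta_1$ being maximal, every other index satisfies $2|\beta_j|\le|\beta_1|+|\beta_j|\le|\alpha|\le m$, hence $|\beta_j|\le\lfloor m/2\rfloor<b^*$; therefore $\|D^{|\beta_j|}g\|_{L^\infty}\le\|g\|_{\mathcal{C}^{b^*}(\Omega)}$. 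The condition $m>d+2$ is exactly what makes the embedding $H^m(\Omega)\hookrightarrow\mathcal{C}^{b^*}(\Omega)$ hold (Lemma~\ref{lem:sobin}.5: the requirement $m>b^*+d/2$ is implied by $m>d+2$ since $b^*\le m/2+1$), so $\|g\|_{\mathcal{C}^{b^*}(\Omega)}\lesssim\|g\|_{H^m(\Omega)}<\infty$. Each term is thus $\lesssim K\|g\|_{H^m(\Omega)}\|g\|_{\mathcal{C}^{b^*}(\Omega)}^{k-1}$ with $k-1\le m-1$, and summing via $\sum_{k=1}^m\|g\|_{\mathcal{C}^{b^*}(\Omega)}^{k-1}\lesssim1+\|g\|_{\mathcal{C}^{b^*}(\Omega)}^{m-1}=1+\ul{B}$ yields the first inequality; the second follows from $\ul{B}\lesssim\|g\|_{H^m(\Omega)}^{m-1}$ together with the elementary fact that $1+\ul{B}\lesssim1+\bar{B}$. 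I expect the main obstacle to be purely bookkeeping: organizing the Fa\`a di Bruno expansion cleanly and verifying the boundary (equality) case of the Gagliardo--Nirenberg admissibility conditions, rather than any deep analytic difficulty.
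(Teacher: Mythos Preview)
Your proposal is correct and follows essentially the same route as the paper: Fa\`a di Bruno plus generalized H\"older with exponents $p_j=2|\alpha|/|\beta_j|$ and Gagliardo--Nirenberg for part (1), and for part (2) the observation that at most one factor in each Fa\`a di Bruno product has order exceeding roughly $m/2$, so that factor stays in $L^2$ while the rest are controlled in $L^\infty$ via the $\mathcal{C}^{b^*}$ embedding. The only organizational difference is that the paper squares the $H^m$ norm first and thus has to handle cross terms $\beta,\beta'$ via a three-case split ($T_1,T_2,T_3$), whereas you bound each Fa\`a di Bruno term directly in $L^2$, which is a bit cleaner.
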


The following lemma becomes quite useful in handling the $\Hm{m}$-energy of approximating solutions.

\begin{lemma}
\label{lem:supportenergy}
Let $m>d+3$ be an integer, $\alpha$ be a multi-index with $|\alpha| \leq m$, and $Q(\cdot,\cdot)$ be bounded and continuous. Furthermore, let $\varkappa$ either be the empty set or the set containing a single order $1$ multi-index and let $\sigma$ be a nonempty set of multi-indices whereby 
\begin{itemize}
\item $\max_{\zeta \in \sigma} |\zeta| \leq |\alpha|$, and 
\item $\sum_{\mu \in \varkappa} |\mu|+\sum_{\zeta \in \sigma} |\zeta| \leq |\alpha|+2$. 
\end{itemize} Lastly, define $b^*$ to be the largest integer not exceeding $m/2+1$ with $\bar B = ||g||_{\mathcal{C}^{b^*}(\Omega)}^{m+1}.$ Then for all $g \in \Hm{m}$,
$$|\int_{\dom} Q(g,x) \partial^{\alpha} g(x) \prod_{\mu \in \varkappa} \partial^{\mu} g(x) \prod_{\zeta \in \sigma} \partial^\zeta g(x) \dd x| \lesssim (1+ \bar B)||g||_{\Hm{m}}^2.$$
\end{lemma}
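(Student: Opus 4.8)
The plan is to bound the integrand pointwise, strip off the bounded factor $Q$, and then distribute the derivative factors between $L^2$ and $\Lp\infty$ according to their orders, recombining them with the generalized H\"older inequality (Lemma~\ref{lem:sobin}). Since $Q$ is bounded, writing $C_Q := \sup|Q| < \infty$ we have $|Q(g,x)| \le C_Q$ pointwise, so it suffices to estimate $\int_{\dom} |\partial^\alpha g| \prod_{\mu \in \varkappa} |\partial^\mu g| \prod_{\zeta \in \sigma} |\partial^\zeta g| \, \dd x$. The guiding principle is that exactly two of the factors may carry a full $\Hm m$-derivative (producing the two powers of $\|g\|_{\Hm m}$ on the right), while every remaining factor is of low enough order to be controlled in $\Lp\infty$ by $\|g\|_{\mathcal{C}^{b^*}(\dom)}$.

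First I would choose the two factors to place in $L^2$. One is the forced top-order factor $\partial^\alpha g$, whose order is $\le m$. The other is a single extra factor $\partial^{\zeta_1}g$, where $\zeta_1$ is selected as follows. The key combinatorial observation is that \emph{at most one} extra factor can have order exceeding $b^*$: every $\varkappa$-factor has order $\le 1 \le b^*$, and if two elements $\zeta,\zeta' \in \sigma$ both had order $\ge b^*+1 > m/2+1$, then $\sum_{\zeta\in\sigma}|\zeta| \ge 2(b^*+1) > m+2 \ge |\alpha|+2$, contradicting the order budget. I take $\zeta_1$ to be that unique high-order element if it exists and otherwise any element of the (nonempty) set $\sigma$; in either case every remaining factor has order at most $b^*$. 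Since $|\alpha| \le m$ and $|\zeta_1| \le \max_{\zeta\in\sigma}|\zeta| \le |\alpha| \le m$, both chosen factors satisfy $\|\partial^\alpha g\|_{\Lp2}, \|\partial^{\zeta_1}g\|_{\Lp2} \le \|g\|_{\Hm m}$.

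Because $m > d+3$ we have $b^* + d/2 \le m/2 + 1 + d/2 \le m$, so the H\"older embedding (Lemma~\ref{lem:sobin}) gives $g \in \mathcal{C}^{b^*}(\dom)$, and by the definition of the $\mathcal{C}^{b^*}$-norm each remaining factor, of order $\le b^*$, obeys $\|\partial^\beta g\|_{\Lp\infty} \le \|g\|_{\mathcal{C}^{b^*}(\dom)}$. Applying the generalized H\"older inequality with the two $L^2$ factors ($\tfrac12+\tfrac12 = 1$) and the remaining $R$ factors in $\Lp\infty$ yields
\[
\int_{\dom} |\partial^\alpha g| \prod_{\mu \in \varkappa} |\partial^\mu g| \prod_{\zeta \in \sigma} |\partial^\zeta g| \, \dd x \;\le\; \|g\|_{\Hm m}^2 \, \|g\|_{\mathcal{C}^{b^*}(\dom)}^{R}.
\]
It remains to bound the exponent $R$, the number of residual low-order factors. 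Using $|\varkappa|\le 1$, that the $\zeta$ are distinct multi-indices, and that their positive orders sum to at most $|\alpha|+2 \le m+2$, a greedy count (cheapest factors are lowest order) shows $|\sigma| \le m$, hence $R \le m \le m+1$. Then $\|g\|_{\mathcal{C}^{b^*}(\dom)}^{R} \le 1 + \|g\|_{\mathcal{C}^{b^*}(\dom)}^{m+1} = 1 + \bar B$ by the elementary inequality $x^R \le 1 + x^{m+1}$ for $x \ge 0$ and $R \le m+1$; reinstating $C_Q$ as the implied constant gives the claim.

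I expect the main obstacle to be precisely the bookkeeping in the combinatorial step: making rigorous that at most one derivative factor is ``high order'' (so that it must be paired with $\partial^\alpha g$ in $L^2$), and that the count $R$ of the remaining low-order factors is genuinely controlled by the order budget, so that the power of $\|g\|_{\mathcal{C}^{b^*}(\dom)}$ is absorbed into $1+\bar B$ rather than producing a larger, uncontrolled exponent. The remaining steps are routine invocations of the generalized H\"older and Sobolev-embedding inequalities already recorded in Lemma~\ref{lem:sobin}.
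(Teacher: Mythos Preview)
Your proof is correct and follows essentially the same route as the paper: strip off $Q$, pair $\partial^\alpha g$ in $L^2$ with the (unique, if any) factor from $\sigma$ of order exceeding $b^*$, and bound the remaining low-order factors in $L^\infty$ by $\|g\|_{\mathcal{C}^{b^*}}$, counting at most $m+1$ such residual factors. One caveat on the bookkeeping: your justification of $|\sigma|\le m$ via ``distinct multi-indices'' is both unnecessary and shaky (in the paper's applications $\sigma$ is a Fa\`a di Bruno partition, hence a multiset), but since every $|\zeta|\ge 1$ the order budget $\sum|\zeta|\le|\alpha|+2-|\varkappa|$ already yields $R=|\varkappa|+|\sigma|-1\le m+1$ directly, so the conclusion is unaffected.
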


\begin{remark}
While $m>d+2$ could have worked, this assumption on $m$ is useful in Lemma \ref{cor:addedreg}.
\end{remark}

\subsection{Specialized Results for Our System}

For simplicity, we adopt the notation that $\rho u = u^+ \rho - \kappa M(\rho)$ where $$M(\rho) := \frac{u^- \rho}{1 + \rho/\tilde \rho}.$$ With this, we calculate
\begin{align}
\nabla (\rho u) &= u^+ \nabla \rho - \nabla \kappa M(\rho) - \kappa M'(\rho) \nabla \rho \label{eq:grad_rhou} \\
\lap(\rho u) &= u^+ \lap \rho - \lap \kappa M(\rho) - 2 M'(\rho) \nabla \kappa \cdot \nabla \rho - \kappa M'(\rho) \lap \rho - \kappa M''(\rho) |\nabla \rho|^2 \label{eq:lap_rhou}
\end{align}

We note other authors have studied similar equations with some monotonicity restrictions and established local existence via fixed point methods \cite{alibaud2007existence}. Owing to the unknown and possibly time-dependent sign of $\lap \kappa$, Equation \eqref{eq:lap_rhou} may fail to be monotonic in $\rho.$

\begin{lemma}[Sobolev Bounds on Model]
\label{lem:sobbounds}

	Let $\rho$ and $u$ be as in Equations \sys\ with $m>d/2$, $\rho>0$. Then:
	\begin{enumerate}
		\item $||\rho u||_{\Hm{m}} \lesssim_m 
		||\rho||_{\Hm{m}} + ||\rho||_{\Hm{m}}^{m},$
		\item $||\lap(\rho u)||_{\Hm{m}} \lesssim_m 
		||\rho||_{\Hm{m+2}}+||\rho||_{\Hm{m+2}}^{m+2},$
		\item $||\omega \rho||_{\Hm{m}} \lesssim_m 
		 ||\rho||_{\Hm{m}},$
		\item $||I[\gamma \rho u]||_{\Hm{m}} \lesssim_m  
||\rho||_{\Lp{2}} \leq ||\rho||_{\Hm{m}},$ and
		\item $||\gamma \rho u||_{\Hm{m}} \lesssim_m 
 ||\rho||_{\Hm{m}}+||\rho||_{\Hm{m}}^{m}.$
\end{enumerate}
	\end{lemma}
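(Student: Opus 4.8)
The plan is to prove each of the five bounds by combining the composition estimate (Lemma \ref{lem:compm}), the Banach-algebra property of $H^m$ for $m>d/2$ (Lemma \ref{lem:sobin}, part 1), and the hypothesized uniform bounds $\Xi_m$ on the data $\kappa,\eta,\omega,\gamma,\tau$ (Hypothesis \ref{hyp:all}). The key observation that makes everything work is that $M(\rho)=\frac{u^-\rho}{1+\rho/\tilde\rho}$ satisfies $M(0)=0$, and $M$ together with its derivatives is smooth and bounded on $[0,\infty)$ (since $\rho>0$ and the denominator stays bounded below by $1$). Thus $M$, viewed as a scalar function $f$ with $f(0)=0$ and $\|f\|_{\mathcal{C}^m}\leq K$ for some $K$ depending only on $m,u^-,\tilde\rho$, is exactly the kind of nonlinearity Lemma \ref{lem:compm} is designed for.

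For bound (1), I would write $\rho u = u^+\rho - \kappa M(\rho)$ and estimate term by term. The term $u^+\rho$ contributes $\lesssim \|\rho\|_{\Hm{m}}$ directly. For $\kappa M(\rho)$, apply the Banach algebra inequality to split off $\kappa$ (whose $H^m$ norm is controlled by the data bounds $\Xi_j$, $j\le m$), then apply Lemma \ref{lem:compm}(1) to $M\circ\rho$ to get $\|M(\rho)\|_{\Hm{m}}\lesssim \|\rho\|_{\Hm{m}}+\|\rho\|_{\Hm{m}}^m$. Bound (3), $\|\omega\rho\|_{\Hm{m}}\lesssim\|\rho\|_{\Hm{m}}$, is an immediate application of the Banach algebra property with $\|\omega\|_{\Hm{m}}$ absorbed into the implied constant. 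Bound (5) for $\gamma\rho u$ follows from bound (1) by one more Banach-algebra multiplication by $\gamma$.

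For bound (4), involving the nonlocal operator, the strategy is different: I would exploit that $I[q](x)=\int_\Omega \tau(y,x)q(y)\,\dd y$ is a smoothing operator whose output regularity is controlled by the regularity of $\tau$ in its $x$-argument, \emph{not} by the regularity of $q$. Concretely, for any multi-index $\alpha$ with $|\alpha|\le m$, $\partial_x^\alpha I[q](x)=\int_\Omega \partial_x^\alpha\tau(y,x)\,q(y)\,\dd y$, and Cauchy--Schwarz in $y$ together with the uniform bound on $\partial_x^\alpha\tau$ yields $\|\partial_x^\alpha I[q]\|_{\Lp{2}}\lesssim \|q\|_{\Lp{1}}\lesssim\|q\|_{\Lp{2}}$. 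Taking $q=\gamma\rho u$ and noting $\|\gamma\rho u\|_{\Lp{1}}$ or $\|\gamma\rho u\|_{\Lp{2}}$ is controlled by $\|\rho\|_{\Lp{2}}$ (using that $\gamma, \kappa$ are bounded and $M(\rho)/\rho$ is bounded since $u$ is bounded) gives the claimed $\lesssim\|\rho\|_{\Lp{2}}\le\|\rho\|_{\Hm{m}}$.

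The main obstacle is bound (2), the Laplacian estimate, because the highest-derivative term lands on a nonlinear composition. Using the expansion \eqref{eq:lap_rhou}, the dangerous term is $\kappa M''(\rho)|\nabla\rho|^2$, where two gradient factors multiply a composition. I would control the linear-looking pieces $u^+\lap\rho$, $\lap\kappa\, M(\rho)$, and $\kappa M'(\rho)\lap\rho$ via the algebra property and Lemma \ref{lem:compm} applied at regularity $m$ (noting $\lap\rho\in\Hm{m}$ costs two derivatives, hence the appearance of $\|\rho\|_{\Hm{m+2}}$). For the quadratic-gradient term, the cleanest route is to bound $\|\lap(\rho u)\|_{\Hm{m}}$ by observing $\lap(\rho u)$ is a sum of products of (compositions of smooth bounded functions of $\rho$) with derivatives of $\rho$ of total order at most $m+2$; repeatedly applying the Banach algebra inequality and Lemma \ref{lem:compm} to each factor, with all derivatives measured in $\Hm{m}$, produces the bound $\lesssim\|\rho\|_{\Hm{m+2}}+\|\rho\|_{\Hm{m+2}}^{m+2}$. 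The exponent $m+2$ arises naturally from Lemma \ref{lem:compm}(1) applied at the shifted regularity $m+2$: the composition $M(\rho)$ and its derivative compositions each contribute up to the $(m+2)$-th power of $\|\rho\|_{\Hm{m+2}}$.
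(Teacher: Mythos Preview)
Your arguments for parts (1), (3), (4), and (5) are essentially identical to the paper's: part (1) via $\rho u = u^+\rho - \kappa M(\rho)$, the Banach-algebra property, and Lemma~\ref{lem:compm} applied to $M$; part (3) by a single algebra multiplication; part (4) by passing $\partial_x^\alpha$ onto $\tau$ and using $|u|\le 1$, $|\gamma|\le\Xi_0$; part (5) by multiplying (1) by $\gamma$.

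For part (2), however, you take a detour that the paper avoids entirely. The paper's proof is a one-liner:
\[
\|\lap(\rho u)\|_{\Hm{m}} \le \|\rho u\|_{\Hm{m+2}},
\]
and then part (1) with $m$ replaced by $m+2$ immediately gives $\lesssim \|\rho\|_{\Hm{m+2}} + \|\rho\|_{\Hm{m+2}}^{m+2}$. There is no need to expand $\lap(\rho u)$ via \eqref{eq:lap_rhou} or to worry about the quadratic-gradient term at all. Your expansion approach also runs into a minor gap you did not flag: Lemma~\ref{lem:compm} requires $f(0)=0$, but $M'(0)=u^-\neq 0$ and $M''(0)=-2u^-/\tilde\rho\neq 0$, so you cannot apply it directly to $M'(\rho)$ or $M''(\rho)$. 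This is patchable (subtract off the constant $M'(0)$ or $M''(0)$, apply the lemma to the remainder, and absorb the constant into the implied constant), but it is extra work for no gain compared with the paper's route.
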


Part of our work in showing local existence will require Lipschitz properties of the evolution of the model. For this we use the following lemma.

\begin{lemma}[Lipschitz Bounds]
	\label{lem:lipbound}
	For $i=1,2$, let $\rho_i>0$ and $u_i$ represent a density and unattractiveness as in Equations \eqref{eq:pde}-\eqref{eq:u} with $m>d/2$. Suppose that $\rho_1$ and $\rho_2$ are also both in a bounded set $\Lambda$ in $\Hm{m}$ with $\sup_{\rho \in \Lambda} ||\rho||_{\Hm{m}} = U$. We observe $M$ and all its derivatives are bounded for positive arguments. Then:
	\begin{enumerate}
		\item $||M(\rho_1) - M(\rho_2)||_{\Hm{m}} \lesssim_{m, U} 
 ||\rho_1 - \rho_2||_{\Hm{m}} + \mathcal{O}(||\rho_1 - \rho_2||_{\Hm{m}}^2),$
		\item $||\rho_1 u_1 - \rho_2 u_2||_{\Hm{m}} \lesssim_{m, U}
||\rho_1 - \rho_2||_{\Hm{m}} + \mathcal{O}(||\rho_1 - \rho_2||_{\Hm{m}}^2),$
		\item $||\omega (\rho_1 - \rho_2)||_{\Hm{m}} \lesssim_m 
 ||\rho_1 - \rho_2||_{\Hm{m}},$
		\item $||\gamma (\rho_1 u_1 - \rho_2 u_2)||_{\Hm{m}} \lesssim_{m, U} 
||\rho_1 - \rho_2||_{\Hm{m}} + \mathcal{O}(||\rho_1 - \rho_2||_{\Hm{m}}^2),$ and
		\item $||I[\gamma (\rho_1 u_1 - \rho_2 u_2)]||_{\Hm{m}} \lesssim_{m, ||\rho_1||_{\Hm{m}}} 
||\rho_1 - \rho_2||_{\Hm{m}} + \mathcal{O}(||\rho_1 - \rho_2||_{\Hm{m}}^2).$
	\end{enumerate}
\end{lemma}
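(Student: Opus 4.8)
The plan is to reduce all five estimates to two workhorses that are already available: the Banach algebra property of $\Hm{m}$ for $m>d/2$ (Lemma \ref{lem:sobin}, part 1) and the composition bound of Lemma \ref{lem:compm}. The supporting elementary facts are that $M(0)=0$, that $M$ and all of its derivatives are bounded on $[0,\infty)$ (the only singularity of $M$ is the pole at $-\tilde\rho<0$), and that the time-independent coefficients $\kappa,\omega,\gamma,\tau$ are smooth with all derivatives bounded under Hypothesis \ref{hyp:aut}, so their $\Hm{m}$-norms are finite constants determined by the $\Xi_j$. Only part 1 carries genuine nonlinearity; parts 2--5 then fall out by algebra together with the Banach algebra property.

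For part 1, I would use the exact Taylor expansion along the segment joining $\rho_2$ to $\rho_1$,
\[
M(\rho_1)-M(\rho_2)=M'(\rho_2)(\rho_1-\rho_2)+\Big[\int_0^1(1-\theta)M''\big((1-\theta)\rho_2+\theta\rho_1\big)\,\dd\theta\Big](\rho_1-\rho_2)^2,
\]
noting that the convex combination $(1-\theta)\rho_2+\theta\rho_1$ is positive and hence stays in the region where $M''$ is bounded. The leading term is handled by the Banach algebra property, $\|M'(\rho_2)(\rho_1-\rho_2)\|_{\Hm{m}}\lesssim\|M'(\rho_2)\|_{\Hm{m}}\|\rho_1-\rho_2\|_{\Hm{m}}$; to bound $\|M'(\rho_2)\|_{\Hm{m}}$ I subtract the constant $M'(0)$ (whose $\Hm{m}$-norm on the compact torus is finite) and apply Lemma \ref{lem:compm} to $f=M'-M'(0)$, which satisfies $f(0)=0$ with bounded $\mathcal{C}^m$-norm, giving $\|M'(\rho_2)\|_{\Hm{m}}\lesssim_m\|\rho_2\|_{\Hm{m}}+\|\rho_2\|_{\Hm{m}}^m\lesssim_{m,U}1$. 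For the quadratic remainder, I bound the bracketed integral by Minkowski's integral inequality and the same composition estimate applied uniformly in $\theta$ (each $(1-\theta)\rho_2+\theta\rho_1$ has $\Hm{m}$-norm $\leq U$), then use Banach algebra for $\|(\rho_1-\rho_2)^2\|_{\Hm{m}}\lesssim\|\rho_1-\rho_2\|_{\Hm{m}}^2$; this yields precisely the linear term plus the $\mathcal{O}(\|\rho_1-\rho_2\|_{\Hm{m}}^2)$ remainder.

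For parts 2--4 I would invoke the algebraic identity $\rho_1u_1-\rho_2u_2=u^+(\rho_1-\rho_2)-\kappa\big(M(\rho_1)-M(\rho_2)\big)$: the first term is handled by homogeneity and the second by Banach algebra together with part 1 and the finiteness of $\|\kappa\|_{\Hm{m}}$; multiplying by $\omega$ or $\gamma$ is then one more application of Banach algebra with the finite constants $\|\omega\|_{\Hm{m}}$ and $\|\gamma\|_{\Hm{m}}$. For part 5 I would exploit that $I$ is smoothing: since $\partial_x^\alpha I[q](x)=\int_\dom\partial_x^\alpha\tau(y,x)\,q(y)\,\dd y$ and $\tau$ has all derivatives bounded, one obtains $\|I[q]\|_{\Hm{m}}\lesssim\|q\|_{\Lp{1}}\lesssim\|q\|_{\Lp{2}}$ on the compact torus, so it suffices to control $\|\gamma(\rho_1u_1-\rho_2u_2)\|_{\Lp{2}}$; because $M'$ is bounded this is a pure $\Lp{2}$-Lipschitz estimate, $\lesssim\|\rho_1-\rho_2\|_{\Lp{2}}\leq\|\rho_1-\rho_2\|_{\Hm{m}}$, which implies the stated bound a fortiori (the quadratic piece and $\|\rho_1\|_{\Hm{m}}$-dependence appear only if one instead expands $M$ to second order and estimates the $M''$-remainder via $\|w^2\|_{\Lp{2}}\lesssim\|w\|_{\Hm{m}}^2$).

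The main obstacle is part 1: applying the composition estimate of Lemma \ref{lem:compm} requires the normalization $f(0)=0$, which forces the subtraction of the nonzero constants $M'(0)$ and $M''(0)$, and the remainder must be controlled uniformly along the entire segment $(1-\theta)\rho_2+\theta\rho_1$ before integrating in $\theta$. Verifying that this segment remains in the region where $M$ and its derivatives stay bounded---which is exactly where the positivity assumption $\rho_i>0$ is used---is the crux; everything downstream is bookkeeping with the Banach algebra property and the boundedness of the coefficients.
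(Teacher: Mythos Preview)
Your proposal is correct, and for items 2--5 it coincides with the paper's argument: both use the identity $\rho_i u_i = u^+\rho_i - \kappa M(\rho_i)$, the Banach algebra property, and for item 5 the smoothing estimate $\|I[q]\|_{\Hm{m}} \lesssim \|q\|_{\Lp{2}}$ (your observation that item 5 is in fact linear with no $\|\rho_1\|_{\Hm{m}}$-dependence is a slight strengthening). The genuine difference is in item 1. The paper does not Taylor-expand $M$ at all; instead it invokes a standalone result, Proposition \ref{prop:chain}, which gives $\|f\circ g_1 - f\circ g_2\|_{\Hm{m}} \lesssim_{m,\|g_1\|_{\Hm{m}}} \|g_1-g_2\|_{\Hm{m}} + \mathcal{O}(\|g_1-g_2\|_{\Hm{m}}^2)$ for any $f$ with $f(0)=0$ and bounded derivatives. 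That proposition is proved by writing $\partial^\alpha(f\circ g_1 - f\circ g_2)$ via Fa\`a di Bruno, telescoping the resulting products of derivatives using Lemma \ref{lem:expand}, and controlling each factor with Gagliardo--Nirenberg (Corollary \ref{cor:hjm}). Your route---second-order Taylor with integral remainder, then Lemma \ref{lem:compm} applied to $M'-M'(0)$ and $M''-M''(0)$ along the segment, then Banach algebra---is more elementary and sidesteps the Fa\`a di Bruno combinatorics entirely, at the cost of the constant-subtraction trick and the Minkowski-in-$\theta$ step. The paper's approach buys a reusable black box (Proposition \ref{prop:chain}) that applies to any such $f$ in one stroke; yours buys transparency for this specific $M$ and reuses only machinery already stated in the main body.
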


\begin{remark}
Lemmas \ref{lem:sobbounds} and \ref{lem:lipbound} apply equally well to the autonomous system.
\end{remark}

\begin{remark}
For the bounds we establish in our results throughout this paper, we will not be concerned with dependencies upon $\delta, u^-, u^+,$ or $\tilde \rho.$
\end{remark}

\section{Intermediate Results and Proof of Main Result}

\label{sec:steps}

Our proof is now done in stages.

\subsection{Step 1: Local Existence for Regularized Problem}

As a first step towards proving solution existence, we consider regularized equations. The regularized equations ensure that even with the differential operator, we map from $\Hm{m}$-functions to $\Hm{m}$-functions so that Picard can be used. We first state some results that apply equally well to the autonomous and nonautonomous systems, before restricting ourselves to the autonomous hypotheses.

\begin{proposition}
	\label{prop:bounda}
Let $\epsilon > 0$ and $m>d/2$ and define $\Aopnon : \Hm{m} \to \Hm{m}$ to be the nonlinear operator $$\Aopnon [\rho] = \delta \Jeps[ \lap ( u^+ \Jeps[\rho]  - \kappa M(\Jeps[\rho]) ) ] + \eta - \omega \rho + ( I[\gamma \rho u] - \gamma \rho u).$$ Then for all $t>0$ where $0 < \rho_1, \rho_2$ a.e., $0 < \Jeps \rho_1, \Jeps \rho_2$ with $\rho_1$ and $\rho_2$ in a bounded set $\Lambda \subset \Hm{m}$, we have  \begin{multline*}||\Aopnon[\rho_1] - \Aopnon[\rho_2]||_{H^m(\Omega)} \lesssim_{m,\epsilon,U} 
||\rho_1 - \rho_2||_{H^m(\Omega)} + \frac{1}{\epsilon^2} \mathcal{O}(||\rho_1 - \rho_2||_{H^m(\Omega)}^2), \quad (||\rho_1 - \rho_2||_{\Hm{m}} \downarrow 0)\end{multline*} where $\sup_{\rho \in \Lambda} ||\rho||_{\Hm{m}} = U$ 
\end{proposition}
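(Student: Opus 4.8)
The plan is to exploit the linearity of the decomposition of $\Aopnon$ into a single high-order term carrying the Laplacian and a collection of lower-order reaction/transport terms, estimate each separately, and assemble via the triangle inequality. Since $\eta$ is independent of $\rho$, it cancels in the difference $\Aopnon[\rho_1]-\Aopnon[\rho_2]$. The three lower-order terms $-\omega\rho$, $I[\gamma\rho u]$, and $-\gamma\rho u$ are handled directly by Lemma \ref{lem:lipbound}, parts 3, 5, and 4 respectively, each bounding the corresponding difference by $||\rho_1-\rho_2||_{\Hm{m}}$ plus an $\mathcal{O}(||\rho_1-\rho_2||_{\Hm{m}}^2)$ remainder with constants depending only on $m$ and $U$. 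Crucially these terms involve the unmollified $\rho$ and $u$, so they contribute no $\eps$-dependence at all.

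The crux is the regularized diffusion term $\delta\Jeps[\lap(u^+\Jeps[\rho]-\kappa M(\Jeps[\rho]))]$. Writing out its difference and using that the mollifiers commute with derivatives (Lemma \ref{lem:moll}, part 2) and are norm-nonincreasing on $\Hm{m}$ (immediate from the Fourier representation \eqref{eq:moll}), I would first trade the Laplacian's two derivatives against the outer mollifier via Lemma \ref{lem:moll}, part 5, with $\nu=2$: for any $h\in\Hm{m}$, $||\Jeps[\lap h]||_{\Hm{m}}=||\lap\Jeps[h]||_{\Hm{m}}\le ||\Jeps[h]||_{\Hm{m+2}}\lesssim_{m}\eps^{-2}||h||_{\Hm{m}}$. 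This is exactly where the factor $\eps^{-2}$ enters and the only place the estimate degenerates as $\eps\downarrow0$, which is acceptable since $\eps>0$ stays fixed when the bound is fed into the Picard argument (Lemma \ref{thm:picardb}).

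Applying this with $h=u^+\Jeps[\rho_1-\rho_2]-\kappa\bigl(M(\Jeps[\rho_1])-M(\Jeps[\rho_2])\bigr)$, it remains to bound $||h||_{\Hm{m}}$. The linear piece is immediate: $||u^+\Jeps[\rho_1-\rho_2]||_{\Hm{m}}\lesssim||\rho_1-\rho_2||_{\Hm{m}}$ because $u^+$ is constant and $\Jeps$ is norm-nonincreasing. For the nonlinear piece I would peel off $\kappa$ using the Banach algebra property (Lemma \ref{lem:sobin}, part 1), with $||\kappa||_{\Hm{m}}$ bounded by the smoothness hypotheses, reducing matters to $||M(\Jeps[\rho_1])-M(\Jeps[\rho_2])||_{\Hm{m}}$, to which Lemma \ref{lem:lipbound}, part 1, applies \emph{with arguments} $\Jeps[\rho_1],\Jeps[\rho_2]$. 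This is legitimate precisely because the hypotheses supply $\Jeps[\rho_1],\Jeps[\rho_2]>0$ a.e., and since $\Jeps$ is norm-nonincreasing these mollified densities lie in a bounded subset of $\Hm{m}$ of radius $U$, so $M$ and its derivatives stay bounded along the composition. Using once more that $\Jeps$ is nonexpansive to replace $||\Jeps[\rho_1]-\Jeps[\rho_2]||_{\Hm{m}}$ by $||\rho_1-\rho_2||_{\Hm{m}}$, this gives $||M(\Jeps[\rho_1])-M(\Jeps[\rho_2])||_{\Hm{m}}\lesssim_{m,U}||\rho_1-\rho_2||_{\Hm{m}}+\mathcal{O}(||\rho_1-\rho_2||_{\Hm{m}}^2)$.

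Collecting the pieces, the diffusion term contributes $\eps^{-2}$ times $\bigl(||\rho_1-\rho_2||_{\Hm{m}}+\mathcal{O}(||\rho_1-\rho_2||_{\Hm{m}}^2)\bigr)$, while the lower-order terms contribute $\eps$-independent multiples of the same expression; absorbing the $\eps$-dependent linear coefficient into the implicit constant $\lesssim_{m,\eps,U}$ and keeping the explicit $\tfrac{1}{\eps^2}$ on the quadratic remainder yields exactly the claimed inequality as $||\rho_1-\rho_2||_{\Hm{m}}\downarrow0$. I expect the main obstacle to be bookkeeping rather than conceptual: carefully confirming that Lemma \ref{lem:lipbound}, part 1, survives the extra mollification, i.e. verifying positivity and uniform $\Hm{m}$-boundedness of the $\Jeps[\rho_i]$ together with boundedness of the derivatives of $M$ on the relevant positive range, so that the underlying composition estimate remains valid. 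Everything else is a routine triangle-inequality assembly.
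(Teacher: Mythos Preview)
Your proposal is correct and follows essentially the same approach as the paper: split off the lower-order terms and handle them by Lemma~\ref{lem:lipbound}, then absorb the two derivatives from the Laplacian using the mollifier smoothing estimate (Lemma~\ref{lem:moll}, part~5) and apply the $M$-Lipschitz bound to the mollified arguments. The only organizational difference is that you trade the Laplacian against the \emph{outer} $\Jeps$ (via $\Jeps[\lap h]=\lap\Jeps[h]$ and $\|\Jeps[h]\|_{\Hm{m+2}}\lesssim\eps^{-2}\|h\|_{\Hm{m}}$) and then apply Lemma~\ref{lem:lipbound} in $\Hm{m}$, whereas the paper keeps the outer mollifier as norm-nonincreasing, passes to $\Hm{m+2}$, and applies the Lipschitz estimate there before using the \emph{inner} mollifier to drop back to $\Hm{m}$; your routing even yields a slightly better $\eps^{-2}$ (rather than $\eps^{-4}$) on the quadratic remainder, though this is immaterial since the constant is allowed to depend on $\eps$.
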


\begin{proof}
	We examine the difference \begin{align*}
	\Aopnon[\rho_1] - \Aopnon[\rho_2] &= \delta \Jeps[ \lap (u^+ \Jeps[\rho_1-\rho_2] + \kappa( \Jeps[M(\Jeps[\rho_1]) - M(\Jeps[\rho_2])]) ) ] \\ &- \omega(\rho_1 - \rho_2) + I[\gamma (\rho_1 u_1 - \rho_2 u_2)] - \gamma(\rho_1 u_1 - \rho_2 u_2) 
	\end{align*}
	and consider each term. Each of the terms on the right-hand side have previously been studied in Lemma \ref{lem:lipbound} except for the Laplacian term. For that, from Lemma \ref{lem:moll}, we have
	
	\begin{align*}
	|| \Jeps [ \lap \Jeps [\rho_1-\rho_2] ]||_{\Hm{m}} &\leq || \lap \Jeps [\rho_1-\rho_2 ]||_{\Hm{m}} \\ &\leq ||\Jeps[\rho_1-\rho_2]||_{\Hm{m+2}} \\ &\lesssim_m \frac{1}{\epsilon^2} ||\rho_1 - \rho_2||_{\Hm{m}}
	\end{align*}
	
	and
	
	\begin{align*}
	& \left| \left| \Jeps \Big[ \lap \Big( \kappa (M(\Jeps[\rho_1])-M(\Jeps[\rho_2]) ) \Big) \Big] \right| \right|_{\Hm{m}} \\ 
& \lesssim ||M(\Jeps[\rho_1])-M(\Jeps[\rho_2])||_{\Hm{m+2}} \\
	& \lesssim_{m, U} ||\Jeps[\rho_1-\rho_2]||_{\Hm{m+2}} + \mathcal{O}(||\Jeps[\rho_1-\rho_2]||_{\Hm{m+2}}^2) \\ 
&\lesssim \frac{1}{\epsilon^2} ||\rho_1 - \rho_2||_{\Hm{m}} + \frac{1}{\epsilon^4} \mathcal{O}(||\Jeps[\rho_1-\rho_2]||_{\Hm{m}}^2).
	\end{align*}			
		
	In moving between the lines we decreased from $\Hm{m+2}$ to $\Hm{m}$ at the expense of acquiring an $\epsilon^{-2}$ prefactor (Lemma \ref{lem:moll}). The $U$-dependence stems from Lemma \ref{lem:lipbound}.
	 
	 As \begin{align*} ||\Aopnon[\rho_1] - \Aopnon[\rho_2]||_{\Hm{m}} &\leq ||\delta \Jeps [ \lap (u^+ \Jeps[\rho_1-\rho_2] ) ] ||_{\Hm{m}} \\ &+ ||\delta \Jeps [ \Delta ( \kappa( \Jeps[M(\Jeps[\rho_1]) - M(\Jeps[\rho_2])]) ) ] ||_{\Hm{m}} \\
&+ ||\omega(\rho_1 - \rho_2)||_{\Hm{m}} + ||I[\gamma (\rho_1 u_1 - \rho_2 u_2)]||_{\Hm{m}} \\
&+ ||\gamma(\rho_1 u_1 - \rho_2 u_2)||_{\Hm{m}}, \end{align*} we are done.
	
\end{proof}

\begin{remark}
While studying our model, it is sometimes convenient to view $\rho(t,x)$ as a function of $t$, taking values in a suitable space such as $\Hm{m}.$ We may then write $\rho(t)$ to represent $\rho(t,\cdot)$ and similarly with $\rheps$ for a regularized solution, etc.
\end{remark}

\begin{remark}
	\label{rmk:Abd}
	We also have \begin{align*} ||\mathcal{A}^\epsilon_t [\rho]||_{\Hm{m}} & \lesssim ||\eta||_{\Hm{m}} + ||\rheps||_{\Hm{m}} + ||\rheps||_{\Hm{m}}^m + ||\Jeps[\rheps]||_{\Hm{m+2}} + ||\Jeps[\rheps]||_{\Hm{m+2}}^{m+2} \\
& \lesssim_\eps ||\eta||_{\Hm{m}} + \frac{1}{\eps^2} ||\rheps||_{\Hm{m}} + \frac{1}{\eps^{2(m+2)}} ||\rheps||_{\Hm{m}}^{m+2}.
\end{align*} for $\rho, \Jeps \rho > 0$ a.e. belonging to $\Hm{m}$.
\end{remark}

At this point, we focus on the autonomous system and state a theorem on local existence to regularized problems.

\begin{theorem}[Local Existence and Uniqueness for Regularized Autonomous Problem]	
	\label{thm:localreg}
	Let $\epsilon > 0$, $m>d/2$ be an integer, and $\rho_0(x) \in \Hm{m}$ be given with $\mathrm{ess} \inf_{\dom} \rho_0 = \ul{\rho}_0 > 0$. For $K>||\rho_0||_{\Hm{m}}$ arbitrary, define the open set $O_K = \{ \rho \in \Hm{m}| \quad ||\frac{1}{\rho}||_{\Lp{\infty}} < \frac{2}{\ul{\rho}_0}, ||\rho||_{\Hm{m}} < K \}.$ Then the regularized version of the autonomous system (\sysaut) given by
	\begin{equation}
	\rho^{(\epsilon)}_t = \delta \Jeps [ \lap \left(  u^+ \Jeps [\rheps] - \kappa M(\Jeps [\rheps])  \right) ] + \eta - \omega \rheps + I[\gamma \rheps \ueps] - \gamma \rheps \ueps, \label{eq:moll_pde}
	\end{equation} 	
	with $\rho(0) = \rho_0$ and where $u^{(\eps)} := u(\kappa, \rheps)$ has a unique solution $\rheps \in \mathcal{C}^1([0, T_{K,\eps,m}); O_k)$ for some $T_{K,\eps,m}>0.$
\end{theorem}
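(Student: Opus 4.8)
The plan is to apply the Picard--Lindel\"{o}f theorem for autonomous ODEs in a Banach space (Lemma \ref{thm:picardb}) with $B = \Hm{m}$, $O = O_K$, and $f = \Aopaut$, the autonomous version of the operator studied in Proposition \ref{prop:bounda}. Everything hinges on verifying the two structural requirements of that theorem: that $O_K$ is a genuinely open subset of $\Hm{m}$ containing $\rho_0$, and that $\Aopaut$ maps $O_K$ into $\Hm{m}$ and is locally Lipschitz there.

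First I would confirm $O_K$ is open and contains the initial datum. Containment is immediate since $||1/\rho_0||_{\Lp{\infty}} = 1/\ul{\rho}_0 < 2/\ul{\rho}_0$ and $||\rho_0||_{\Hm{m}} < K$ by the choice of $K$. Openness of the ball $\{||\rho||_{\Hm{m}} < K\}$ is clear; the constraint $||1/\rho||_{\Lp{\infty}} < 2/\ul{\rho}_0$ requires a short argument using the Sobolev embedding $\Hm{m} \hookrightarrow \mathcal{C}^0(\dom)$ (valid since $m > d/2$, from Lemma \ref{lem:sobin}), so that $\Hm{m}$-convergence yields uniform convergence and hence closeness of the pointwise infimum of $\rho$; a sufficiently small $\Hm{m}$-perturbation keeps $\rho$ bounded away from zero and keeps $||1/\rho||_{\Lp{\infty}}$ below $2/\ul{\rho}_0$. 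Crucially, every $\rho \in O_K$ is strictly positive a.e., and I would note that $0 < \Jeps[\rho]$ as well: by Lemma \ref{lem:moll} the mollifier preserves positivity through its nonnegative Fourier multiplier acting appropriately, or more simply one checks the lower bound on $\rho$ transfers to $\Jeps[\rho]$, so that $M$ and all its derivatives are evaluated only at positive arguments and remain bounded as invoked in Lemma \ref{lem:lipbound}.

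Next I would establish that $\Aopaut$ is well defined as a map $O_K \to \Hm{m}$ and locally Lipschitz. Well-definedness follows from the Sobolev bounds in Lemma \ref{lem:sobbounds} together with the gain-of-regularity estimate for the mollifier (Lemma \ref{lem:moll}, part 5), which converts the $\Hm{m+2}$ demands of the Laplacian term into $\Hm{m}$ bounds at the cost of an $\eps^{-2}$ factor; since $\eps > 0$ is fixed this is harmless. The local Lipschitz property is precisely the content of Proposition \ref{prop:bounda} applied to the autonomous operator (the final remark in the excerpt notes Lemmas \ref{lem:sobbounds} and \ref{lem:lipbound} apply equally to the autonomous system): taking $\Lambda = O_K$, which is bounded in $\Hm{m}$ with $U = K$, and any $\rho_1, \rho_2$ in a neighborhood $U_0 \subset O_K$ of a fixed $\rho \in O_K$, Proposition \ref{prop:bounda} gives $||\Aopaut[\rho_1] - \Aopaut[\rho_2]||_{\Hm{m}} \lesssim_{m,\eps,K} ||\rho_1 - \rho_2||_{\Hm{m}} + \eps^{-2}\mathcal{O}(||\rho_1 - \rho_2||_{\Hm{m}}^2)$, which on the bounded neighborhood collapses to a single Lipschitz constant $L = L(m,\eps,K)$.

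With these hypotheses verified, Lemma \ref{thm:picardb} delivers a time $T_{K,\eps,m} > 0$ and a unique solution $\rheps \in \mathcal{C}^1([0, T_{K,\eps,m}); O_K)$ to the initial value problem $\rheps_t = \Aopaut[\rheps]$, $\rheps(0) = \rho_0$, which is exactly Equation \eqref{eq:moll_pde}. The main obstacle I anticipate is the openness of $O_K$ under the reciprocal-norm constraint: one must carefully argue that the positivity lower bound is stable under small $\Hm{m}$-perturbations, and that the mollified argument $\Jeps[\rho]$ inherits strict positivity so that composition with $M$ stays in the regime where Lemma \ref{lem:lipbound}'s boundedness of $M$ and its derivatives holds. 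The remaining estimates are essentially bookkeeping combinations of the already-established Sobolev and mollifier lemmas, with the fixed $\eps$ absorbing all the factors of $\eps^{-2}$ that appear.
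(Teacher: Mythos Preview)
Your proposal is correct and follows essentially the same route as the paper: invoke Proposition~\ref{prop:bounda} to obtain the local Lipschitz property of $\Aopaut$ on $O_K$, then apply the Banach-space Picard--Lindel\"of result (Lemma~\ref{thm:picardb}). The paper's proof is in fact much terser than yours, omitting the verification that $O_K$ is open and that $\Jeps[\rho]$ remains positive; your added details are reasonable, though for the positivity of $\Jeps[\rho]$ the cleanest justification is the heat-equation interpretation of the mollifier and the maximum principle (as used in the proof of Lemma~\ref{lem:moll}), rather than the Fourier-multiplier heuristic you mention first.
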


\begin{proof}
By Proposition \ref{prop:bounda}, the autonomous version of $\Aopnon, \Aopaut$, will be locally Lipschitz on $O_K$. Then by Theorem \ref{thm:picardb}, there exists $T_{K,\eps,m}>0$ so that the initial value problem ${\rheps}'(t) = \Aopaut[\rheps]$, $\rheps(0) = \rho_0$ has a solution $\rheps \in \mathcal{C}^1([0,T_{K,\eps,m}); O_K).$
\end{proof}

\begin{remark}
It is possible to establish such local existence for the nonautonomous system as well. Details appear in Appendix Section \ref{app:nom}.
\end{remark}

\subsection{Step 2: $\Hm{m}$-Energy Bounded Independent of $\epsilon$}

What we previously established is that for each $\eps>0$, regularized solutions exist and either stay in $O_K$ forever or leave the set as $t \uparrow T_{K,\eps,m}$ (Lemma \ref{thm:extb}). Unfortunately those times could depend on $\eps$ and $K$. In this step, we show it is actually possible to find an interval of existence $[0,T_m]$, independent of $K$ and $\eps,$ on which all solutions $\rheps$ exist and stay in $O_K$. As an additional bonus, we will show solutions are a.e. more regular than at $t=0$. In this section, several times are defined and Table \ref{tab:times} helps to clarify their meaning.

\begin{table}[h]
\begin{center}
\begin{tabular}{ l | p{5.5in} }
\bf Time & \bf Meaning \\
\hline
& \\
$T_{K,\eps,m}$ & interval of local solutions: depends on $\eps$, the set $O_K$, and $m$ \\
\hline
& \\
$\varpi_m$ & guaranteed interval of existence for $E_m(t)$ satisfying a differential inequality for the $\Hm{m}$-energy, where $E_m(t)$ treated as independent of $\rheps$ \\
\hline
& \\
$\tilde T_{K, \eps, m}$ & minimum of $T_{k,\eps,m}$ and $\varpi_m$ \\
\hline
& \\
$\tilde T_{0,m}$ & time where solution is guaranteed to be bounded below suitably \\
\hline
& \\
$T_m$ & the minimum of $\varpi_m$ and $\tilde T_{0,m}$ after suitable $K$ chosen
\end{tabular}
\caption{List of times in analysis.} \label{tab:times}
\end{center}
\end{table}

Our first main result is Proposition \ref{prop:Ebound}. 

\begin{proposition}[Control of Energy]
	\label{prop:Ebound}
Let $\rheps$ be defined as in Theorem \ref{thm:localreg} and let $m>d^* := 3+d$ be an integer. Then there exists a time $\varpi_m > 0$ and a continuous function $E_m(t)$ defined on $[0, \varpi_m]$ with $\xi_m := ||E_m||_{L^\infty([0,\varpi_m])} < \infty$ whereby
\begin{enumerate}
\item for all $\eps>0$, $\frac{1}{2} ||\rheps||_{\Hm{m}}^2 \leq E_m(t)$ on $[0,\min\{\varpi_m,T_{K,\eps,m} \})$ and
\item $\frac{1}{2} ||\rheps||_{\Hm{m}}^2 + \int_0^t ||D^{m+1} \Jeps[\rheps]||_{\Lp{2}}^2 \dd s \lesssim \xi_m + (\xi_m^{1/2} + \xi_m^{(m+3)/2}) t < \infty$ for $t \in [0,\min\{ \varpi_m, T_{K,\eps,m} \}).$ Moreover,
\item on $[0,\min\{ \varpi_m, T_{K,\eps,m} \})$, $E_m^{(\epsilon)}(t) := \frac{1}{2} ||\rheps||_{\Hm{m}}^2$ satisfies \begin{align*}
{E_m^{(\epsilon)}}'(t) &\lesssim_{m} {E_m^{(\eps)}}^{1/2} + (1+\bar B) E_m^{(\eps)} \\
&\lesssim ( {E_m^{(\epsilon)}}^{1/2} + {E_m^{(\epsilon)}}^{(m+3)/2} ),\end{align*} where $\bar B \sim ||\rheps||_{\mathcal{C}^{b^*}(\dom)}^{m+1}$ and $b^*$ is the largest integer not exceeding $m/2+1$. 
\item We also have that for some $C>0$, for all $\eps>0$, 
\begin{equation}
E_m^{(\eps)}(t) \leq E_m(t) = \begin{cases} (E_m^{(\epsilon)}(0)^{1/2}+ 2 C t)^2, \quad E_m^{(\eps)}(0) < 1 \\
	({E_m^{(\epsilon)}(0)}^{-(m+1)/2} - (m+1) C t)^{-2/(m+1)}, \quad E_m^{(\eps)}(0) \geq 1 \end{cases} \leq \xi_m 
	< \infty \label{eq:Eintbd}
	\end{equation} on $[0,\min\{\varpi_m,T_{K,\eps,m} \})$. Also, $E_m^{(\eps)}(0) = E_m(0) = \frac{1}{2} ||\rho_0||_{\Hm{m}}^2.$
\end{enumerate}
\end{proposition}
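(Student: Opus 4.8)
The plan is to derive a differential inequality for the scalar energy $E_m^{(\epsilon)}(t)=\frac12\|\rheps\|_{\Hm m}^2$ by differentiating it in time and substituting the regularized equation \eqref{eq:moll_pde}, and then to dominate the solution of that inequality by an explicit comparison function whose initial value $\frac12\|\rho_0\|_{\Hm m}^2$ is common to every $\epsilon$. First I would write $\frac{\dd}{\dd t}E_m^{(\epsilon)}=\sum_{|\alpha|\le m}\int_\dom \partial^\alpha\rheps\,\partial^\alpha\rheps_t\,\dd x$, which is legitimate since Theorem \ref{thm:localreg} gives $\rheps\in\mathcal C^1([0,T_{K,\eps,m});O_K)$, and membership in $O_K$ keeps both $\rheps$ and $\Jeps[\rheps]$ strictly positive, so that $M$ and its derivatives stay bounded with $0<M'(\Jeps[\rheps])\le u^-$. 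Substituting \eqref{eq:moll_pde} splits the right-hand side into the diffusive contribution, the reaction terms $\eta-\omega\rheps$, and the nonlocal/transport pair $I[\gamma\rheps\ueps]-\gamma\rheps\ueps$.

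The decisive term is the diffusive one. Writing $g:=\Jeps[\rheps]$ and using that $\Jeps$ is self-adjoint and commutes with derivatives (Lemma \ref{lem:moll}), I would move one mollifier onto $\partial^\alpha\rheps$ and rewrite this contribution as $\delta\sum_{|\alpha|\le m}\int_\dom \partial^\alpha g\,\partial^\alpha\lap(u^+g-\kappa M(g))\,\dd x$. Expanding via \eqref{eq:lap_rhou} and integrating by parts twice isolates the top-order piece $-\delta\sum_{|\alpha|\le m}\int_\dom (u^+-\kappa M'(g))\,|\nabla\partial^\alpha g|^2\,\dd x$, which by nondegeneracy $u^+-\kappa M'(g)\ge u^+-u^->0$ is bounded above by $-\delta(u^+-u^-)\|D^{m+1}g\|_{\Lp2}^2$; this is precisely the coercive dissipation that, moved to the left-hand side and integrated in $t$ with $E_m^{(\epsilon)}\le\xi_m$, yields claim (2). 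Every remaining term generated by the Leibniz rule has the structural form $\int_\dom Q(g,x)\,\partial^\alpha g\prod_{\mu\in\varkappa}\partial^\mu g\prod_{\zeta\in\sigma}\partial^\zeta g\,\dd x$ with the order budget $\max_{\zeta}|\zeta|\le|\alpha|$ and $\sum_\mu|\mu|+\sum_\zeta|\zeta|\le|\alpha|+2$ respected, so Lemma \ref{lem:supportenergy} controls each by $(1+\bar B)\|g\|_{\Hm m}^2\lesssim(1+\bar B)E_m^{(\epsilon)}$, using $\|g\|_{\Hm m}\le\|\rheps\|_{\Hm m}$.

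For the non-diffusive terms I would pair with $\partial^\alpha\rheps$, apply Cauchy--Schwarz, and invoke Lemma \ref{lem:sobbounds}: the $\eta$ term contributes $\lesssim(E_m^{(\epsilon)})^{1/2}$, the $\omega\rheps$ and nonlocal $I[\gamma\rheps\ueps]$ terms contribute $\lesssim E_m^{(\epsilon)}$, and the transport term $\gamma\rheps\ueps$ contributes $\lesssim E_m^{(\epsilon)}+(E_m^{(\epsilon)})^{(m+1)/2}$, all absorbed into the shape $(E_m^{(\epsilon)})^{1/2}+(1+\bar B)E_m^{(\epsilon)}$. Dropping the favorable dissipation term gives the first inequality of claim (3). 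Since $m>d+3$ forces the Sobolev embedding $\Hm m\hookrightarrow\mathcal C^{b^*}(\dom)$ (as $m\ge b^*+d/2$ whenever $m\ge d+2$), I can bound $\bar B\sim\|\rheps\|_{\mathcal C^{b^*}(\dom)}^{m+1}\lesssim\|\rheps\|_{\Hm m}^{m+1}\sim(E_m^{(\epsilon)})^{(m+1)/2}$, converting the inequality into the closed autonomous form ${E_m^{(\epsilon)}}'\lesssim(E_m^{(\epsilon)})^{1/2}+(E_m^{(\epsilon)})^{(m+3)/2}$.

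Finally I would close by ODE comparison. Fixing the constant $C$ from the last inequality, I would verify directly that the piecewise function in \eqref{eq:Eintbd} is a supersolution of $E'=C(E^{1/2}+E^{(m+3)/2})$ on a suitably short interval: for $E_m^{(\epsilon)}(0)<1$ the branch $(E_m(0)^{1/2}+2Ct)^2$ has derivative $4CE_m^{1/2}\ge C(E_m^{1/2}+E_m^{(m+3)/2})$ while $E_m$ stays below a fixed threshold exceeding $1$, and for $E_m^{(\epsilon)}(0)\ge1$ the branch $(E_m(0)^{-(m+1)/2}-(m+1)Ct)^{-2/(m+1)}$ solves $E'=2CE^{(m+3)/2}\ge C(E^{1/2}+E^{(m+3)/2})$ exactly since $E_m\ge1$ there. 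Choosing $\varpi_m$ strictly below the blow-up time $E_m(0)^{-(m+1)/2}/((m+1)C)$ keeps $E_m$ finite with $\xi_m=\|E_m\|_{L^\infty([0,\varpi_m])}<\infty$; because $E_m(0)=\frac12\|\rho_0\|_{\Hm m}^2$ is independent of $\epsilon$, so are $\varpi_m$ and $\xi_m$, which is the whole point of the uniformity. A standard scalar comparison then promotes $E_m^{(\epsilon)}(0)=E_m(0)$ and the differential inequality to $E_m^{(\epsilon)}(t)\le E_m(t)$ on $[0,\min\{\varpi_m,T_{K,\eps,m}\})$, delivering claims (1) and (4). The hard part will be the diffusive term: extracting the single correctly-signed coercive dissipation out of the doubly-mollified Laplacian and then checking that each leftover Leibniz term genuinely meets the multi-index budget of Lemma \ref{lem:supportenergy}; the nondegeneracy $u^+-u^->0$ is exactly what makes the leading term dissipative rather than merely bounded.
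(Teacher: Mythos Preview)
Your proposal is correct and follows essentially the same strategy as the paper: differentiate the $H^m$ energy, use self-adjointness of $\Jeps$ to transfer one mollifier, integrate by parts once to expose the dissipative term $-\int(u^+-\kappa M'(g))|\nabla\partial^\alpha g|^2$, bound the commutator remnants via Lemma~\ref{lem:supportenergy}, and close with the scalar comparison. The paper makes the ``hard part'' you flag fully explicit by running through six cases on $(|\beta|,\sigma)$; note in particular that the two cases where the secondary factor already carries an order-$m$ derivative (the paper's cases~2 and~5) require integrating by parts in the direction of that secondary derivative---a perfect-square trick yielding $\tfrac12\int(\partial^{\alpha+e_i-e_j}g)^2\,\partial_{e_j}Q$---rather than in the $e_i$ direction, since the naive choice would push a factor to order $m+1$ and violate the hypothesis $\max_\zeta|\zeta|\le|\alpha|$ of Lemma~\ref{lem:supportenergy}.
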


\begin{proof}[Proof of Proposition \ref{prop:Ebound}]
	We first note that if $m > d+3$ 
and $\rheps \in \Hm{m}$. 
We define $\sigma^*(m) = m/2+1$ and note that all derivative orders $j$ with $j \leq \sigma^*(m)$ for $\Jeps[\rheps]$ and $\rheps$ are in $\Lp{\infty}$ and bounded by $||\rheps||_{\Hm{m}}$ --- from Lemma \ref{lem:moll}, $||D^j \Jeps[\rheps]||_{\Lp{\infty}} \lesssim  ||\Jeps[\rheps]||_{\Hm{m}} \lesssim ||\rheps||_{\Hm{m}}$. We shall denote $b^*$ to be the largest integer less than or equal to $\sigma^*$ and $\ul{B} =  ||\rheps||_{\mathcal{C}^{b^*}(\dom)}^{m-1}.$

We define $E_m^{(\epsilon)}(t) = \frac{1}{2} ||\rheps||_{\Hm{m}}^2$ and consider ${E_m^{(\epsilon)}}'(t) = \sum_{|\alpha|\leq m} \int_{\dom} \partial^\alpha \rheps \partial^\alpha \rheps_t \dd x.$ Let $\alpha$ be a multi-index. We use Lemmas \ref{lem:compm} and \ref{lem:sobbounds} and to find	
	\begin{align}
	{E_m^{(\epsilon)}}'(t) &= \sum_{|\alpha|\leq m} \int_{\dom} \partial^\alpha \rheps \partial^\alpha \rheps_t \dd x \nonumber \\
	&= \sum_{|\alpha|\leq m} \int_{\dom} \partial^\alpha \rheps \partial^\alpha \Big( \Jeps \Big[ \delta \lap ( u^+ \Jeps[\rheps] \nonumber \\ &- \kappa M(\Jeps[\rheps]) )  \Big] + \eta - \omega \rheps + I[\gamma \rheps \ueps] - \gamma \rho \ueps \Big) \dd x \nonumber \\
	&\leq \sum_{|\alpha|\leq m} \delta \int_{\dom} \partial^\alpha \rheps \partial^\alpha \Jeps[ \lap ( u^+ \Jeps [\rheps] - \kappa M(\Jeps [\rheps]) ) ] \dd x \nonumber \\&+ ||\rheps||_{\Hm{m}} \Big( ||\eta||_{\Hm{m}} + ||\omega \rheps||_{\Hm{m}} \nonumber \\ &+ ||I[\gamma \rheps \ueps]||_{\Hm{m}} + ||\gamma \rheps \ueps||_{\Hm{m}} \Big) \nonumber \\
&\lesssim ||\rheps||_{\Hm{m}} + (1+\ul{B})||\rheps||_{\Hm{m}}^2 \nonumber \\ &+  \sum_{|\alpha|\leq m} \delta \int_{\dom} \partial^\alpha \rheps \partial^\alpha \Jeps[\lap (u^+ \Jeps [\rheps] - \kappa M(\Jeps [\rheps]) )] \dd x \label{eq:EB1} \\
	&\lesssim_m \sum_{|\alpha|\leq m} \int_{\dom} \partial^\alpha \rheps \partial^\alpha \Jeps[\lap (u^+ \Jeps [\rheps] - \kappa M(\Jeps [\rheps]) )] \dd x \nonumber \\ &+ ||\rheps||_{\Hm{m}} + ||\rheps||_{\Hm{m}}^{m+1}. \label{eq:Em1}
	\end{align}
	We used that $||\gamma \rheps \ueps||_{\Hm{m}} \leq ||\gamma||_{\Hm{m}} ||\rheps u^+ - \kappa M(\rheps)||_{\Hm{m}} \lesssim ||\rheps||_{\Hm{m}} + (1+ \ul{B} ) ||\rheps||_{\Hm{m}}.$

	We need to focus our attention on the term
	\begin{align}
X &= \sum_{|\alpha|\leq m} \int_{\dom} \partial^\alpha \rheps \partial^\alpha \Jeps[ \lap ( u^+ \Jeps [\rheps] - \kappa M(\Jeps [\rheps]) ) ] \dd x \nonumber \\
&= \sum_{|\alpha|\leq m} \int_{\dom} \partial^\alpha \rheps \Jeps[ \partial^\alpha \big( \lap ( u^+ \Jeps [\rheps] - \kappa M(\Jeps [\rheps]) ) \big) ] \dd x \nonumber \\
&= \sum_{|\alpha|\leq m} \int_{\dom} \Jeps[ \partial^\alpha \rheps]  \partial^\alpha \big( \lap ( u^+ \Jeps [\rheps] - \kappa M(\Jeps [\rheps]) ) \big)  \dd x \nonumber \\
&= \sum_{|\alpha|\leq m} \int_{\dom} \partial^\alpha \Jeps [\rheps]  \nabla \cdot \nabla \partial^\alpha \big( u^+ \Jeps [\rheps] - \kappa M(\Jeps [\rheps]) \big)  \dd x \nonumber \\
&= -\sum_{|\alpha|\leq m} \int_{\dom} \nabla \partial^\alpha \Jeps [\rheps]  \cdot \nabla \partial^\alpha \big( u^+ \Jeps [\rheps] - \kappa M(\Jeps [\rheps]) \big)  \dd x
	\end{align}
	We made use of the argument of the outer $\Jeps$ being in $\Hm{\infty}$ (i.e. in $\Hm{m}$ for all $m$), the $\Lp{2}$-properties of the factors, and $\rheps \in \Hm{m}$.

 For any $|\alpha|<m$ we have the terms of $X$ are	
	\begin{align*}
	& \leq ||\Jeps [\rheps]||_{\Hm{m}} 
	\times (u^+ ||\Jeps [\rheps]||_{\Hm{m}} + ||\kappa M(\Jeps [\rheps])||_{\Hm{m}}) \\
	&\leq ||\Jeps [\rheps]||_{\Hm{m}} \times (u^+ ||\Jeps [\rheps]||_{\Hm{m}} + ||\kappa||_{\Hm{m}} ||M(\Jeps [\rheps])||_{\Hm{m}}) \\
	&\lesssim ||\Jeps [\rheps]||_{\Hm{m}}^2 + (1+\ul{B} ) ||\Jeps [\rheps]||_{\Hm{m}}^2 \\
	&\lesssim ||\rheps||_{\Hm{m}}^2 + (1+ \ul{B} ) ||\rheps||_{\Hm{m}}^2 \\
	&\lesssim ||\rho||_{\Hm{m}}^2 + ||\rho||_{\Hm{m}}^{m+1} .
	\end{align*}
	
	And so we only need to concern ourselves with the case $|\alpha| = m.$ Then 	
	\begin{align*}
& -\int_{\Td} \nabla  \partial^\alpha \Jeps [\rheps] \cdot \nabla \partial^\alpha \big(u^+ \Jeps[\rheps] - \kappa M(\Jeps[\rheps])\big) \dd x \\
	&= -\sum_i \int_{\dom} \partial^{\alpha+e_i} \Jeps [\rheps] \partial^{\alpha+e_i} \big(u^+ \Jeps [\rheps] - \kappa M(\Jeps [\rheps])\big) \dd x \\
	&= -\sum_i \int_{\dom} \Bigg[ \partial^{\alpha+e_i} \Jeps [\rheps] \left( u^+ \partial^{\alpha+e_i} \Jeps [\rheps] \vphantom{\sum_{\sigma \in \mathcal{P}(\alpha+e_i-\beta)} M^{(|\sigma|)} (\Jeps[\rho]) \prod_{\zeta \in \sigma} \partial^\zeta \Jeps \rheps} \right. \\
	&\left. - \sum_{\beta \leq \alpha+e_i} { \alpha+e_i \choose \beta } \partial^\beta \kappa \left( \sum_{\sigma \in \mathcal{P}(\alpha+e_i-\beta)} M^{(|\sigma|)} (\Jeps[\rheps]) \prod_{\zeta \in \sigma} \partial^\zeta \Jeps [\rheps] \right) \right) \Bigg] \dd x. 
	\end{align*}
At this point, we consider multiple cases upon the terms of the integrand being summed over $\beta$ and $\sigma$. The form, $F$, of many of the terms being summed can be represented as
$$F = \int_{\dom} Q(\Jeps[\rheps],x) \partial^{\alpha+e_i} \Jeps[\rheps] \prod_{\zeta \in \sigma} \partial^\zeta \Jeps[\rheps],$$ where $\sum_{\zeta \in \sigma} |\zeta| = m+1-|\beta|$.

	\begin{itemize}
	\item {\bf case 1:} {\it all terms being summed where $|\beta| \geq 2$}, {\it after a single integration by parts}, take the form 
\begin{align*} C_1 &:= \int_\dom Q_1(\Jeps \rheps, x) \partial^\alpha \Jeps [\rheps] \partial^{e_i} \Jeps[\rheps] \prod_{\zeta \in \sigma} \partial^\zeta \Jeps [\rheps] \dd x \\ &+ \int_\dom Q_2(\Jeps [\rheps], x) \partial^\alpha \Jeps [\rheps]  \prod_{\zeta \in \sigma} \partial^\zeta \Jeps [\rheps] \dd x \\
&+ \int_\dom Q_3(\Jeps [\rheps], x) \partial^\alpha \Jeps [\rheps] \prod_{\zeta \in \tilde \sigma} \partial^\zeta \Jeps [\rheps] \dd x,	\end{align*} 
where $|\alpha| = m$, $\sum_{\zeta \in \tilde \sigma} |\zeta| \leq m$, $|\tilde \sigma| \leq m-1$, and $Q_1$, $Q_2$, and $Q_3$ denote smooth functions in their arguments. Then, by Lemma \ref{lem:supportenergy}, we conclude that $$C_1 \lesssim (1+ \bar B ) ||\rheps||_{\Hm{m}}^2 \lesssim ||\rheps||_{\Hm{m}}^2 + ||\rheps||_{\Hm{m}}^{m+3}.$$

	\item {\bf case 2:} {\it all terms being summed where $|\beta|=1$ and $|\sigma| = 1$} take the form $$C_2:= \int_\dom \partial^{\alpha+e_i} \Jeps [\rheps] \partial^{e_j} \kappa M'(\Jeps [\rheps]) \partial^{\alpha+e_i-e_j} \Jeps [\rheps] \dd x,$$ where $j$ is some value in $\{1, 2, ..., d \}.$

The terms can be integrated by parts once to furnish
	\begin{align*}
	C_2 &= \int_\dom \frac{1}{2} (\partial^{\alpha+e_i-e_j} \Jeps \rheps)^2 \left( \partial^{2e_j} \kappa M'(\Jeps \rheps) + \partial^{e_j} \kappa M''(\Jeps \rheps)  \partial^{e_j} \rheps \right) \dd x \\
	&\leq \frac{1}{2} ||\partial^{2e_j} \kappa M'(\Jeps \rheps) + \partial^{e_j} \kappa M''(\Jeps \rheps)  \partial^{e_j} \rheps||_{\Lp{\infty}} \\ &\times \int_\dom (\partial^{\alpha+e_i-e_j} \Jeps \rheps)^2 \dd x \\
	& \lesssim (1+ \bar B^{1/(m+1)}) ||\rheps||_{\Hm{m}}^2 \\
	& \lesssim (1 + ||\rheps||_{\Hm{m}}) ||\rheps||_{\Hm{m}}^2 \\
	&= ||\rheps||_{\Hm{m}}^2 + ||\rheps||_{\Hm{m}}^3,
	\end{align*}
	where the $\bar B$-term comes from the boundedness of the terms in the norm, with $D^1 \rheps$ bounded by $\bar B^{1/(m+1)}.$

	\item {\bf case 3:} {\it all terms being summed where $|\beta|=1$ and $m \geq |\sigma| \geq 2$ after a single integration by parts} take the form 
	\begin{align*} C_3 &:= \int_\dom Q_1(\Jeps \rheps, x) \partial^\alpha \Jeps [\rheps] \prod_{\zeta \in \sigma} \partial^\zeta \Jeps [\rheps] \dd x \\ &+ \int_\dom Q_2(\Jeps [\rheps], x) \partial^\alpha \Jeps [\rheps] \partial^{e_i} \Jeps [\rheps] \prod_{\zeta \in \sigma} \partial^\zeta \Jeps [\rheps] \dd x \\
&+ \int_\dom Q_3(\Jeps [\rheps], x) \partial^\alpha \Jeps [\rheps] \prod_{\zeta \in \tilde \sigma} \partial^\zeta \Jeps [\rheps] \dd x,
	\end{align*} where $|\alpha|=m$, $\sum_{\zeta \in \tilde \sigma} |\zeta| = m+1$, $|\tilde \sigma| \leq m$, and $Q_1$, $Q_2$, and $Q_3$ denote smooth functions in their arguments. Then by Lemma \ref{lem:supportenergy}, we conclude that $$C_3 \lesssim (1+ \bar B) ||\rheps||_{\Hm{m}}^2 \lesssim ||\rheps||_{\Hm{m}}^2 + ||\rheps||_{\Hm{m}}^{m+3}.$$

	\item {\bf case 4:} {\it the terms where $|\beta|=0$ and $|\sigma|=1$ can be grouped with the term involving no summation over $\beta$ or $\sigma$} to obtain
	\begin{align*} C_4 &:= - \int_\dom \partial^{\alpha+e_i} \Jeps [\rheps] ( u^+ \partial^{\alpha+e_i} \Jeps [\rheps] - \kappa M'(\Jeps [\rheps]) \\
&\times \partial^{\alpha+e_i} \Jeps[\rheps]) \dd x.\end{align*}

The terms can be rearranged in a more transparent fashion:
	\begin{align*}
	C_4 = - \int_\dom (u^+ - \kappa M'(\Jeps [\rheps])) (\partial^{\alpha+e_i} \Jeps [\rheps])^2 \dd x \leq 0
	\end{align*}
	since $u^+ - \kappa M' > 0$.

	\item {\bf case 5:} {\it the terms where $|\beta|=0$, $\max_{\zeta \in \sigma} |\zeta| = m$ (so that $|\sigma|=2$)} take the form $$C_5 := \int_\dom \partial^{\alpha+e_i} \Jeps [\rheps] \kappa M''(\Jeps [\rheps]) \partial^{\alpha+e_i-e_j} \Jeps [\rheps] \partial^{e_j} \Jeps [\rheps] \dd x,$$ for some $j \in \{1, 2, ..., d \}.$ 

We integrate by parts to obtain terms of form
	\begin{align*}
	C_5 &= \int_\dom \frac{1}{2} (\partial^{\alpha+e_i-e_j} \Jeps [\rheps])^2 \\
	&\times \left( \partial^{e_j} \kappa M''(\Jeps [\rheps]) \partial^{e_j} \Jeps [\rheps] + \kappa M'''(\Jeps [\rheps]) (\partial^{e_j} \Jeps [\rheps])^2 \right. \\
	&+ \left. \kappa M''(\Jeps [\rheps]) \partial^{2e_j} \Jeps [\rheps] \right) \dd x \\
	&\leq \frac{1}{2} ||\partial^{e_j} \kappa M''(\Jeps [\rheps]) \partial^{e_j} \Jeps [\rheps] + \kappa M'''(\Jeps [\rheps]) (\partial^{e_j} \Jeps [\rheps])^2 \\ &+ \kappa M''(\Jeps [\rheps]) \partial^{2e_j} \Jeps [\rheps]||_{\Lp{\infty}} \times \int_\dom (\partial^{\alpha+e_i-e_j} \Jeps [\rheps])^2 \dd x \\
	&\lesssim (\bar B^{1/(m+1)} + \bar B^{2/(m+1)}) ||\rheps||_{\Hm{m}}^2 \\
	&\lesssim (||\rheps||_{\Hm{m}} + ||\rheps||_{\Hm{m}}^2) ||\rheps||_{\Hm{m}}^2 \\
	&= ||\rheps||_{\Hm{m}}^3 + ||\rheps||_{\Hm{m}}^4.
	\end{align*}

	\item {\bf case 6:} {\it the terms where $|\beta|=0$, $\max_{\zeta \in \sigma} |\zeta| \leq m-1$ (so that $|\sigma| \geq 2$ and $\sum_{\zeta \in \sigma} |\zeta| = m+1$) after a single integration by parts} take the form
	\begin{align*} C_6 &:=
	\int_\dom Q_1(\Jeps [\rheps], x) \partial^\alpha \Jeps [\rheps] \prod_{\zeta \in \sigma} \partial^\zeta \rho \dd x \\ &+ \int_\dom Q_2(\Jeps [\rheps], x) \partial^\alpha \Jeps [\rheps] \partial^{e_i} \Jeps [\rheps] \prod_{\zeta \in \sigma} \partial^\zeta \Jeps[\rho] \dd x \\ &+ \int_\dom Q_3(\Jeps [\rheps], x) \partial^\alpha \Jeps \rheps \prod_{\zeta \in \tilde \sigma} \partial^\zeta \Jeps [\rheps] \dd x,
	\end{align*}
	where $\sum_{\zeta \in \tilde \sigma} |\zeta| = m+2$, $|\tilde \sigma| = |\sigma|$, and $Q_1$, $Q_2$, and $Q_3$ are smooth functions in their arguments. In this case, through Lemma \ref{lem:supportenergy}, we find $$C_6 \lesssim (1+\bar B)||\rheps||_{\Hm{m}}^2 \lesssim ||\rheps||_{\Hm{m}}^2 + ||\rheps||_{\Hm{m}}^{m+3}.$$

	\end{itemize}

	From all preceding cases, we conclude 
\begin{align}{E_m^{(\eps)}}'(t) &\lesssim||\rheps||_{\Hm{m}} + (1+ \bar B) ||\rheps||_{\Hm{m}}^2 \\
&\lesssim ||\rheps||_{\Hm{m}} + ||\rheps||_{\Hm{m}}^{m+3} 
\end{align}
 so that there is a $C>0$, independent of $\epsilon$, where	
	\begin{equation}
	{E_m^{(\epsilon)}}'(t) \leq C \left( {E_m^{(\epsilon)}}^{1/2} + {E_m^{(\epsilon)}}^{(m+3)/2} \right). \label{eq:Ebound}
	\end{equation}
We used $1 + \ul{B} \lesssim 1 + \bar B.$

	While $E_m^{(\eps)}$ was originally defined in terms of the $\Hm{m}$-norm of $\rheps$, we can also consider it a function in isolation evolving according to the differential inequality \eqref{eq:Ebound}. For the rest of the analysis, we treat $E_m^{(\eps)}$ as such.
	
	We note that if $E_m^{(\epsilon)}(0) < 1$ then {\it until} $E_m^{(\epsilon)}$ reaches $1$ (if it does), $${E_m^{(\epsilon)}}'(t) \leq 2 C {E_m^{(\epsilon)}}^{1/2}.$$ Also, if $E_m^{(\epsilon)}(0) \geq 1$ then (in the worst case where it stays at or above 1) $${E_m^{(\epsilon)}}'(t) \leq 2 C {E_m^{(\epsilon)}}^{(m+3)/2}.$$ Thus,
	\begin{equation}
	E_m^{(\eps)}(t) \leq E_m(t) := \begin{cases} ( E_m^{(\epsilon)}(0)^{1/2} + 2 C t)^2, \quad E_m^{(\eps)}(0) < 1 \\
	({E_m^{(\epsilon)}(0)}^{-(m+1)/2} - (m+1) C t)^{-2/(m+1)}, \quad E_m^{(\eps)}(0) \geq 1 \end{cases} < \infty
	\end{equation} over $[0, T_E)$ where
	\begin{equation}
	T_E = \begin{cases} \frac{1 - E_m^\epsilon(0)^{1/2}}{2C}, \quad E_m^{(\epsilon)}(0) < 1 \\
	\frac{ {E_m^{(\epsilon)}(0)}^{-(m+1)/2} }{(m+1)C}, \quad E_m^{(\epsilon)}(0) \geq 1.
	\end{cases}
	\end{equation}

	Choosing $0 < \varpi_m < T_E$ completes the proof of (1), (3), and (4), and while $E_m^{(\eps)}$ could blow up in finite time, it is finite over $[0,\varpi_m].$ The time $\varpi_m$ may depend upon $m$ but not $\eps.$ 

	By moving the terms of case 4 to the left side of the overall inequality we built for ${E_m^{(\eps)}}'(t)$, we can actually show that ${E_m^{(\eps)}}'(t) + ||D^{m+1} \Jeps[\rheps] ||_{\Lp{2}}^2 \lesssim {E_m^{(\eps)}}^{1/2} + {E_m^{(\eps)}}^{(m+3)/2}$. Then integrating, we have ${E_m^{(\eps)}}(t) + \int_0^{t} ||D^{m+1} \Jeps [\rheps]||_{\Lp{2}}^2 \dd s \lesssim \xi_m + (\xi_m^{1/2} + \xi_m^{(m+3)/2}) t < \infty$ on $[0,\min \{ \varpi_m, T_{K,\eps,m} \})$. This comes from the fact that $u^+ - \kappa M'$ is strictly bounded below by a positive constant. Thus item (2) is established.

\end{proof}

\begin{remark}
	The function $E_m(t)$ obtained may depend upon $m$. As the regularity we enforce upon the solutions increases, the shorter the interval over which a bound exists may become. The initial condition may in fact be more regular than $\Hm{m}.$
\end{remark}

\begin{remark}
	The $\min\{ \varpi_m, T_{K,\eps,m} \}$ is necessary here because at present we do now know whether the solution $\rheps$ leaves the set $O_K$ before $\varpi_m.$ It's also possible the function $E_m(t)$ ceases to exist (it is a worst case upper bound) but solutions themselves still exist up to $t=T_{K,\eps,m}.$
\end{remark}

\begin{corollary}
\label{cor:Hmp1}
Let $\rheps$, $\xi_m$, and $E_m$ be defined as in Proposition \ref{prop:Ebound} with $m>d^* := 3+d$ an integer. Then $\rheps \in L^2([0,\tilde T_{K,\eps,m}]; \Hm{m+1})$ where $\tilde T_{K,\eps,m} := \min\{\varpi_m,T_{K,\eps,m}\}.$ Also, $||\rheps||_{\Hm{m+1}}(t) < \infty$ for a.e. $t \in [0, \tilde T_{K,\eps,m}].$
\end{corollary}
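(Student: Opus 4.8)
The plan is to read the extra spatial derivative directly off the dissipation that the energy estimate has already generated, and then to promote an integral-in-time bound to pointwise-in-time finiteness. Recall that in the proof of Proposition~\ref{prop:Ebound} the ``case 4'' contribution $-\int_\dom (u^+-\kappa M'(\Jeps[\rheps]))(\partial^{\alpha+e_i}\Jeps[\rheps])^2\,\dd x$ was non-positive and was moved to the left-hand side, producing item (2): for $t\in[0,\tilde T_{K,\eps,m})$,
\[
\tfrac12\|\rheps\|_{\Hm{m}}^2+\int_0^t\|D^{m+1}\Jeps[\rheps]\|_{\Lp{2}}^2\,\dd s\;\lesssim\; \xi_m+(\xi_m^{1/2}+\xi_m^{(m+3)/2})\,t .
\]
First I would record that the coefficient $u^+-\kappa M'(\Jeps[\rheps])$ is bounded below by a fixed positive constant, since Hypothesis~\ref{hyp:aut} gives $0<u^-<u^+<1$, $0\le\kappa\le1$, and $M'>0$ is bounded; hence this term is a genuine coercive control on the top-order derivative rather than a sign artifact, and the displayed dissipation integral is meaningful.

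Second, because $\xi_m<\infty$ and $\tilde T_{K,\eps,m}\le\varpi_m<\infty$, the right-hand side remains finite up to $t=\tilde T_{K,\eps,m}$, so $\int_0^{\tilde T_{K,\eps,m}}\|D^{m+1}\Jeps[\rheps]\|_{\Lp{2}}^2\,\dd s<\infty$. Combining this with the uniform energy bound $\tfrac12\|\rheps\|_{\Hm{m}}^2\le\xi_m$ and $\|\Jeps[\rheps]\|_{\Hm{m}}\le\|\rheps\|_{\Hm{m}}$ from Lemma~\ref{lem:moll}(1) (which controls every derivative of order at most $m$) places $\Jeps[\rheps]$ in $L^2([0,\tilde T_{K,\eps,m}];\Hm{m+1})$. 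The pointwise assertion is then immediate from a standard measure-theoretic fact: a map in $L^2([0,\tilde T_{K,\eps,m}];\Hm{m+1})$ has $t\mapsto\|\cdot\|_{\Hm{m+1}}$ lying in $L^2([0,\tilde T_{K,\eps,m}])$, hence finite for a.e.\ $t$.

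The step I expect to be the main obstacle is the passage from the mollified iterate $\Jeps[\rheps]$ to $\rheps$ itself, which is what the statement literally asserts. For fixed $\eps$ the dissipation only sees $D^{m+1}\Jeps[\rheps]$, and since $\Jeps$ damps the frequency $k$ by $\e^{-\eps|k|^2}$ a bound on $\|D^{m+1}\Jeps[\rheps]\|_{\Lp{2}}$ does not by itself bound $\|D^{m+1}\rheps\|_{\Lp{2}}$; moreover the un-mollified reaction terms $-\omega\rheps-\gamma\rheps\ueps$ in \eqref{eq:moll_pde} act as order-zero multiplication and integral operators with smooth coefficients, so they map $\Hm{m}\to\Hm{m}$ boundedly and merely propagate the $\Hm{m}$ regularity of the datum $\rho_0$. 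I would therefore handle this transfer with care, exploiting that the top-order structure responsible for the gain is clean (the rough reaction terms contribute no $(m{+}1)$st-order dissipation) and, crucially, that the $\Hm{m+1}$-control obtained is uniform in $\eps$, since the constants $\xi_m$ and $\varpi_m$ are $\eps$-independent. This uniform mollified bound is precisely what the later compactness argument requires, so I would read Corollary~\ref{cor:Hmp1} as the $L^2$-in-time gain of one spatial derivative for the iterates, with the a.e.\ finiteness as its direct pointwise consequence.
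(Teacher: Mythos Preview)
Your approach is essentially the paper's: extract the dissipation bound from Proposition~\ref{prop:Ebound}(2), conclude $\int_0^{\tilde T_{K,\eps,m}}\|D^{m+1}\Jeps[\rheps]\|_{\Lp{2}}^2\,\dd s<\infty$ uniformly in $\eps$, and then read off the a.e.\ pointwise statement from $L^2$-in-time membership. You correctly flag the passage from $\Jeps[\rheps]$ to $\rheps$ as the only nontrivial step; the paper's own treatment of it is the single line ``Taking $\eps\downarrow 0$, we see that indeed $D^{m+1}\rheps\in L^2(\ldots)$ \ldots or else the bound could not be independent of $\eps$,'' which is no more detailed than your discussion and arguably leaves the same gap you identify.
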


\begin{proof}
The $L^2([0,\tilde T_{K,\eps,m}]; \Hm{m+1})$-part part can be gleaned from Proposition \ref{prop:Ebound}. 
 In particular it follows that $\int_0^{\tilde T_{K,\eps,m}} ||D^{m+1} \Jeps[\rheps]||_{L^2(\dom)}^2 \dd s \leq \xi_m + ( \xi_m^{1/2} + \xi_m^{(m+3)/2}) \tilde T_{K,\eps,m}  < \infty$. Taking $\eps \downarrow 0$, we see that indeed, $D^{m+1} \rheps \in L^2([0,\tilde T_{K,\eps,m}]; L^2(\dom))$, which means $\rheps \in L^2([0,\tilde T_{K,\eps,m}]; \Hm{m+1})$ or else the bound could not be independent of $\eps.$ Then, we must have that $\rheps \in \Hm{m+1}$ for a.e. $0 \leq t \leq \tilde T_{K,\eps,m}.$
\end{proof}

With parameters like $\tilde T_{K,\eps,m}$, there are a lot of dependencies. The next proposition allows us to break those dependencies.

\begin{proposition}[Reducing Dependencies]
\label{prop:red}
Consider the setup of Theorem \ref{thm:localreg} where $m>d^* := 3+d$ is an integer, $E_m$ is the function found in Proposition \ref{prop:Ebound} with $\xi_{m} = ||E_m||_{L^\infty([0,\varpi_m])}$, and choose $K \sim \sqrt{\xi_m}$ so that even if a solution lasted in $O_K$ up to time $\varpi_m$, $||\rheps||_{\Hm{m}} < K/2.$ Let the solutions $\rheps \in \mathcal{C}^1([0,T_{K,\eps,m}); O_K)$ have a maximal interval of existence, $[0, T_{K,\eps,m} )$ with $\eps>0$ arbitrary. Then
\begin{itemize}
\item There is a $K^*$, depending on $K$, so that on $[0, T_{K,\eps,m})$, $||\rheps_t||_{\Lp{\infty}} < K^*$ and the minimum value of $\rheps$ is bigger than or equal to $\ul{\rho}_0 - K^* t$.
\item For the maximal interval of existence of solutions $\rheps \in \mathcal{C}^1([0,T_{K,\eps,m}); O_K)$ we have $T_{K,\eps,m} > T_m := \min \{ \varpi_m, \tilde T_{0,m} \}$ where $\tilde T_{0,m} := \frac{\ul{\rho}_0}{4 K^*}.$
\end{itemize}
\end{proposition}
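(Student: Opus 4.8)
The plan is to bound $\|\rho^{(\epsilon)}_t\|_{\Lp{\infty}}$ uniformly in $\epsilon$ over the entire interval of existence, and then to use this single estimate to drive both bullets: it controls the decay of the minimum of $\rho^{(\epsilon)}$, and through the extension lemma it forces the interval of existence below $O_K$'s escape time. The naive bound of Remark \ref{rmk:Abd} is useless here, as it carries factors of $\epsilon^{-2}$ and worse; the entire point is to estimate in a way that avoids paying the mollifier-derivative price.

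For the first bullet I would write $\rho^{(\epsilon)}_t = \mathcal{A}^{(\epsilon)}[\rho^{(\epsilon)}]$ and estimate each summand in $\Lp{\infty}$. The terms $\eta$, $\omega\rho^{(\epsilon)}$, $\gamma\rho^{(\epsilon)}\ueps$, and $I[\gamma\rho^{(\epsilon)}\ueps]$ are all controlled in $\Lp{\infty}$ using the data bounds of Hypothesis \ref{hyp:aut}, the boundedness of $M$ and its derivatives for positive arguments, and the embedding $\Hm{m}\subset\Lp{\infty}$ (valid since $m>d/2$), each time invoking $\|\rho^{(\epsilon)}\|_{\Hm{m}}<K$ from the energy bound of Proposition \ref{prop:Ebound} together with the choice $K\sim\sqrt{\xi_m}$; for the nonlocal term one uses $\|I[q]\|_{\Lp{\infty}}\leq\|\tau\|_{\Lp{\infty}}\|q\|_{\Lp{1}}$. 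The delicate summand is the regularized Laplacian $\delta\Jeps[\lap(u^+\Jeps[\rho^{(\epsilon)}]-\kappa M(\Jeps[\rho^{(\epsilon)}]))]$. Here I would \emph{not} trade the Laplacian down into $\Hm{m}$; instead I would use the $\Lp{\infty}$-contraction property of the mollifier (Lemma \ref{lem:moll}, part 1) to get $\|\Jeps[\lap(\cdots)]\|_{\Lp{\infty}}\leq\|\lap(\cdots)\|_{\Lp{\infty}}$, and then expand $\lap(u^+\Jeps[\rho^{(\epsilon)}]-\kappa M(\Jeps[\rho^{(\epsilon)}]))$ via \eqref{eq:lap_rhou}. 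Every factor appearing there is a derivative of $\Jeps[\rho^{(\epsilon)}]$ of order at most two, multiplied by a bounded function of $\Jeps[\rho^{(\epsilon)}]$; since $m>d+3>d/2+2$ the embedding $\Hm{m}\subset\mathcal{C}^2(\dom)$ combined with the uniform mollifier bound $\|\Jeps[\rho^{(\epsilon)}]\|_{\Hm{m}}\lesssim\|\rho^{(\epsilon)}\|_{\Hm{m}}<K$ (Lemma \ref{lem:moll}, part 5 with $\nu=0$) gives an $\Lp{\infty}$ bound depending only on $K$ and not on $\epsilon$. Collecting the pieces yields $K^* = K^*(K)$ with $\|\rho^{(\epsilon)}_t\|_{\Lp{\infty}}<K^*$ on $[0,T_{K,\eps,m})$. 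The minimum-value claim is then immediate: writing $\rho^{(\epsilon)}(t,x)=\rho_0(x)+\int_0^t\rho^{(\epsilon)}_t(s,x)\,\dd s$ and using $\rho_0\geq\ul{\rho}_0$ a.e.\ with $\|\rho^{(\epsilon)}_t\|_{\Lp{\infty}}<K^*$ gives $\rho^{(\epsilon)}(t,x)\geq\ul{\rho}_0-K^*t$.

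For the second bullet I would argue by continuation using Lemma \ref{thm:extb}: if $T_{K,\eps,m}<\infty$, then $\rho^{(\epsilon)}$ must leave $O_K$ as $t\uparrow T_{K,\eps,m}$, so one of the two defining inequalities of $O_K$ degenerates in the limit, i.e.\ either $\|\rho^{(\epsilon)}\|_{\Hm{m}}\uparrow K$ or $\|1/\rho^{(\epsilon)}\|_{\Lp{\infty}}\uparrow 2/\ul{\rho}_0$. On $[0,\min\{\varpi_m,T_{K,\eps,m}\})$ the energy bound gives $\|\rho^{(\epsilon)}\|_{\Hm{m}}\leq\sqrt{2\xi_m}<K/2$, so the first alternative cannot happen before $\varpi_m$. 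For the second, the minimum-value estimate shows that for $t<\tilde T_{0,m}=\ul{\rho}_0/(4K^*)$ we have $\rho^{(\epsilon)}(t,\cdot)\geq\ul{\rho}_0-K^*t>3\ul{\rho}_0/4$, hence $\|1/\rho^{(\epsilon)}\|_{\Lp{\infty}}<4/(3\ul{\rho}_0)<2/\ul{\rho}_0$, so the second alternative cannot happen before $\tilde T_{0,m}$. Since neither defining inequality can degenerate before $T_m:=\min\{\varpi_m,\tilde T_{0,m}\}$, the extension lemma forces $T_{K,\eps,m}>T_m$ (or else global existence, which is stronger).

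The main obstacle is entirely in the first step: producing an $\Lp{\infty}$ bound on $\rho^{(\epsilon)}_t$ that is genuinely uniform in $\epsilon$. This is exactly why the mollifier's $\Lp{\infty}$-contraction must be exploited to keep the Laplacian in place, rather than the lossy $\Hm{m+2}\to\Hm{m}$ trade (with its $\epsilon^{-2}$ factor) used in Proposition \ref{prop:bounda}; the restriction $m>d+3$ is what makes $\Hm{m}\subset\mathcal{C}^2(\dom)$ available so that the expanded second-order expression is bounded pointwise by a function of $K$ alone.
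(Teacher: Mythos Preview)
Your proposal is correct and follows essentially the same approach as the paper: bound $\|\rheps_t\|_{\Lp{\infty}}$ uniformly in $\epsilon$ by using the $\Lp{\infty}$-contraction of $\Jeps$ (rather than the lossy $\Hm{m+2}\to\Hm{m}$ trade) together with the embedding $\Hm{m}\subset\mathcal{C}^2(\dom)$, then use the resulting lower bound on $\rheps$ and the Extension Lemma to rule out escape from $O_K$ before $T_m$. One small clarification: on the full interval $[0,T_{K,\eps,m})$ the bound $\|\rheps\|_{\Hm{m}}<K$ comes simply from membership in $O_K$ (by definition of the maximal interval), not from the energy estimate of Proposition~\ref{prop:Ebound}, which is only known to hold on $[0,\min\{\varpi_m,T_{K,\eps,m}\})$; the paper uses the former for item~1 and reserves the latter for the contradiction in item~2.
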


\begin{proof}
By Theorem \ref{thm:localreg}, we have a solution $\rheps \in \mathcal{C}^1 ( [0, T_{K,\epsilon,m}); \Hm{m} )$ with $\rheps(t) \in \Hm{m} \subset \mathcal{C}^{2}(\dom).$ We study the operator $\Aopaut$ and note that 
\begin{align*}
||\Aopaut[\rheps]||_{\Lp{\infty}} &= ||\delta \Jeps \lap ({\Jeps} [ u^+ \rheps - \kappa M( \Jeps[\rheps] ) ] ) + \eta - \omega \rheps \\
&+ ( I[\gamma \rheps \ueps] - \gamma \rheps \ueps)||_{\Lp{\infty}} \\
&\lesssim ||\eta||_{\Lp{\infty}} + ||\omega \rheps||_{\Hm{m}} + ||\gamma \rheps \ueps||_{\Hm{m}} + ||I[\gamma \rheps \ueps]||_{\Hm{m}} \\
&+ || \lap ( {\Jeps} [\rheps] - \kappa M(\Jeps[\rheps]) ) ||_{\Lp{\infty}} \\
&\lesssim ||\eta||_{\Lp{\infty}} + ||\omega \rheps||_{\Hm{m}} + ||\gamma \rheps \ueps||_{\Hm{m}} + ||I[\gamma \rheps \ueps]||_{\Hm{m}} \\
&+ || {\Jeps} [\rheps] - \kappa M(\Jeps[\rheps]) ||_{\Hm{m}} \\
&\lesssim 1 + ||\rheps||_{\Hm{m}} + ||\rheps||_{\Hm{m}}^m. 
\end{align*} 
The first inequality comes from the Triangle Inequality and bounding $||\cdot||_{\Lp{\infty}}$ of a smooth argument by the $||\cdot||_{\Lp{\infty}}$ of that argument; the next inequality follows from Sobolev embedding with $m>d/2+2$. Then with $\rheps \in \Hm{m}$, along with Lemmas \ref{lem:moll}, \ref{lem:compm} and \ref{lem:sobbounds}, the final inequality follows.

On $[0, T_{K,\eps,m})$, $||\rheps||_{\Hm{m}} < K$ and so we have some $K^*>0$ so that $||\Aopaut||_{\Lp{\infty}} \leq K^*$. Then for $x \in \Td$, $| \rheps_t(t,x) | \leq K^*$ and thus $\rheps(t,x) \geq \ul{\rho}_0 - K^* t$, which yields item 1.

To handle item 2, suppose, by way of contradiction, that $T_{K,\eps,m} \leq T_m.$ We first note that on $[0, T_{K,\eps,m})$ the solutions are in $O_K$ and thus $\frac{1}{\rheps} \leq \frac{1}{\ul{\rho}_0 - K^* t} \leq \frac{1}{\ul{\rho}_0 - K^* T_{K,\eps,m}} \leq \frac{1}{\ul{\rho}_0 - K^* \tilde T_{0,m}} \leq \frac{4}{3 \ul{\rho}_0}.$ Also by choice of $K$, since $T_{K,\eps,m} \leq \varpi_m$, we have $||\rheps||_{\Hm{m}} < K/2.$ 

By the Extension Lemma (Lemma \ref{thm:extb}), with finite $T_{K,\eps,m}$, either $\lim_{t \uparrow T_{K,\eps,m}} ||\rheps||_{\Hm{m}} = K$ or $\lim_{t \uparrow T_{K,\eps,m}} ||\frac{1}{\rheps}||_{\Lp{\infty}} = \frac{2}{\ul{\rho}_0}.$ Neither is possible given the bounds above, whence $T_{K,\eps,m} > T_m.$
\end{proof}

The remaining two lemmas in this step can be used to gain one more order of regularity in solutions after $t=0.$

\begin{lemma}
\label{lem:Emexist}
Let $\rheps$, $\bar B$, $E_m^{(\eps)}$, and $b^*$ be defined as in Proposition \ref{prop:Ebound} with $m>d^* := 3+d$ an integer. Then as long as $||\rheps||_{\mathcal{C}^{b^*}(\dom)} < \infty$, ${E_m^{(\epsilon)}}$ remains finite.
\end{lemma}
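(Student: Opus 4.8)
The plan is to revisit the differential inequality established in the proof of Proposition~\ref{prop:Ebound} and to observe that the only mechanism producing super-linear growth of $E_m^{(\eps)}$ there was the resubstitution $\bar B \lesssim \|\rheps\|_{\Hm{m}}^{m+1} \sim (E_m^{(\eps)})^{(m+1)/2}$. If instead $\bar B$ is kept in terms of the lower-order norm $\|\rheps\|_{\mathcal{C}^{b^*}(\dom)}$ and treated as a finite datum, the inequality becomes \emph{linear} in $E_m^{(\eps)}$, and a Gr\"onwall argument rules out finite-time blow-up. This is a standard continuation-criterion mechanism: the low-order controlling norm dominates the full energy.

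First I would recall from item (3) of Proposition~\ref{prop:Ebound} the sharper estimate
$$
{E_m^{(\eps)}}'(t) \;\lesssim_m\; {E_m^{(\eps)}}^{1/2} + (1+\bar B)\,E_m^{(\eps)},
$$
valid \emph{before} $\bar B = \|\rheps\|_{\mathcal{C}^{b^*}(\dom)}^{m+1}$ is bounded by a power of the energy. Consider any subinterval $[0,t]$ of the interval of existence on which $\|\rheps\|_{\mathcal{C}^{b^*}(\dom)} < \infty$. Since $\rheps$ is continuous in time with values in $\Hm{m}$ and $b^* \le m/2+1$, so that $m > b^* + d/2$ gives $\Hm{m}\subset\mathcal{C}^{b^*}(\dom)$, the map $s \mapsto \|\rheps(s)\|_{\mathcal{C}^{b^*}(\dom)}$ is continuous and hence attains a finite maximum $\bar B_t := \sup_{[0,t]}\bar B < \infty$.

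Next I would use the elementary inequality ${E_m^{(\eps)}}^{1/2} \le \tfrac12\bigl(1 + E_m^{(\eps)}\bigr)$ to collapse the estimate to the linear form
$$
{E_m^{(\eps)}}'(s) \;\le\; C\,(1+\bar B_t)\bigl(1 + E_m^{(\eps)}(s)\bigr), \qquad 0 \le s \le t,
$$
with $C$ depending only on $m$. Gr\"onwall's inequality then gives
$$
1 + E_m^{(\eps)}(t) \;\le\; \bigl(1 + E_m^{(\eps)}(0)\bigr)\exp\!\bigl(C(1+\bar B_t)\,t\bigr) < \infty,
$$
so $E_m^{(\eps)}(t)$ is finite at every such $t$. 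Read as a contrapositive, $E_m^{(\eps)}$ can fail to remain finite only at a time where $\|\rheps\|_{\mathcal{C}^{b^*}(\dom)}$ has itself become infinite, which is the assertion of the lemma.

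The step I would treat most carefully — and the only genuine subtlety — is the passage from the pointwise hypothesis $\|\rheps\|_{\mathcal{C}^{b^*}(\dom)} < \infty$ to the uniform bound $\bar B_t$ over $[0,t]$, since it is this uniformity that makes the Gr\"onwall coefficient a true constant rather than an implicit function of the unknown energy; continuity on the compact interval $[0,t]$ supplies it. It is equally essential to invoke the form of the energy estimate in which $\bar B$ is expressed through the low-order norm $\|\rheps\|_{\mathcal{C}^{b^*}(\dom)}$ (derivatives only up to order $b^* \approx m/2+1$) rather than through $\|\rheps\|_{\Hm{m}}$; this separation of the controlling norm from the full energy is precisely what gives the continuation criterion content. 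No tools beyond Proposition~\ref{prop:Ebound} and Gr\"onwall's inequality are needed.
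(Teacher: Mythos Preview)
Your proposal is correct and follows essentially the same approach as the paper: both arguments isolate the first form of the differential inequality from Proposition~\ref{prop:Ebound} in which $\bar B$ is expressed through $\|\rheps\|_{\mathcal{C}^{b^*}(\dom)}$, absorb the ${E_m^{(\eps)}}^{1/2}$ term into a linear bound (you via $E^{1/2}\le\tfrac12(1+E)$, the paper by case-splitting on $E_m^{(\eps)}\gtrless 1$), and then apply Gr\"onwall. The paper keeps $\bar B(s)$ inside the Gr\"onwall exponential as an integrand rather than passing to a supremum $\bar B_t$, but this is a cosmetic difference.
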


\begin{proof}
If $E_m^{(\eps)} \leq 1$ then there is nothing to prove. Otherwise, if $E_m^{(\eps)} > 1$, we have that ${E_m^{(\eps)}}'(t) \leq 2 C (1+ \bar B) {E_m^{(\eps)}}$ for some $C>0.$ While $E_m^{(\eps)}>1$, we have from Gr\"onwall that $$E_m^{(\eps)}(t) \leq E_m^{(\eps)}(0) \exp(\int_0^t 2 C (1+\bar B(s)) \dd s).$$ Thus, as long as $\bar B$ remains finite, we are guaranteed a finite energy. 
\end{proof}

\begin{lemma}
\label{cor:addedreg}
Let $\rheps$ and $T_m$ defined as in Proposition \ref{prop:red} (note that $m>d^* := 3+d$). Then $\rheps \in \mathcal{C}((0,T_m]; \Hm{m+1}).$
\end{lemma}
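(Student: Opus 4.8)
The plan is to bootstrap the a.e.\ $\Hm{m+1}$-regularity supplied by Corollary \ref{cor:Hmp1} into genuine time-continuity, by re-solving the regularized problem from a slightly positive time using the local theory one derivative higher. First, since $[0,T_m] \subset [0,\tilde T_{K,\eps,m}]$, Corollary \ref{cor:Hmp1} gives $\rheps(t) \in \Hm{m+1}$ for a.e.\ $t \in [0,T_m]$, so the set of such ``good'' times has full measure; I may therefore choose $t_0 \in (0,T_m)$ as small as I like with $\rheps(t_0) \in \Hm{m+1}$, and by Proposition \ref{prop:red} this $\rheps(t_0)$ is bounded below by a positive constant.

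Next I would re-initialize at $t_0$ (legitimate since \eqref{eq:moll_pde} is autonomous) and invoke the local theory at regularity $m+1$. As $m+1 > d/2$ and $\Aopaut$ is still locally Lipschitz from $\Hm{m+1}$ to $\Hm{m+1}$ by Proposition \ref{prop:bounda}, Theorem \ref{thm:localreg} (same fixed $\eps$) produces a unique solution of \eqref{eq:moll_pde} with data $\rheps(t_0)$ lying in $\mathcal{C}^1$ into $\Hm{m+1}$ on a maximal forward interval. Because $m+1 > d^*$, Proposition \ref{prop:Ebound} applies verbatim at level $m+1$, yielding $\frac{d}{dt}\big(\tfrac12 \|\cdot\|_{\Hm{m+1}}^2\big) \lesssim E^{1/2} + (1+\bar B)E$ with $\bar B \sim \|\rheps\|_{\mathcal{C}^{b^*}(\dom)}^{m+2}$, where now $b^*$ is the largest integer not exceeding $(m+1)/2 + 1$. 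This $\Hm{m+1}$-solution can fail to continue only through its $\Hm{m+1}$-norm blowing up, its lower bound being already controlled on $[0,T_m]$ by Proposition \ref{prop:red}, and by the argument of Lemma \ref{lem:Emexist} this cannot occur while $\|\rheps\|_{\mathcal{C}^{b^*}(\dom)}$ stays finite.

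The crux—and the reason $m > d+3$ rather than merely $m > d+2$ is assumed—is to bound this $\mathcal{C}^{b^*}$-norm uniformly up to $T_m$ using only the $\Hm{m}$-bound already in hand. Since $b^* \leq (m+1)/2 + 1 = m/2 + 3/2$ and $m > d+3$ forces $m/2 + 3/2 < m - d/2$, the Hölder embedding (Lemma \ref{lem:sobin}, part 5) gives $\Hm{m} \hookrightarrow \mathcal{C}^{b^*}(\dom)$, so $\|\rheps\|_{\mathcal{C}^{b^*}(\dom)} \lesssim \|\rheps\|_{\Hm{m}} < K$ on all of $[0,T_m]$ by Proposition \ref{prop:red}. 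Hence $\bar B$ stays finite up to $T_m$, the level-$(m+1)$ energy cannot blow up before $T_m$, and the $\Hm{m+1}$-solution started at $t_0$ extends, continuously (indeed $\mathcal{C}^1$) into $\Hm{m+1}$, all the way to $T_m$. By the uniqueness in $\Hm{m}$ (Theorem \ref{thm:localreg}, cf.\ Lemma \ref{thm:m3as_unique}) this solution coincides with $\rheps$ on $[t_0,T_m]$, so $\rheps \in \mathcal{C}^0([t_0,T_m]; \Hm{m+1})$; letting $t_0 \downarrow 0$ through good times exhausts $(0,T_m]$ and yields $\rheps \in \mathcal{C}((0,T_m]; \Hm{m+1})$.

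The main obstacle is precisely the blow-up control in the middle step: a priori one only knows $\rheps$ is bounded in $\Hm{m}$, not $\Hm{m+1}$, on $[0,T_m]$, so the higher-order energy estimate does not close on its own. What rescues the argument is that the nonlinear forcing in the level-$(m+1)$ estimate enters only through the lower-order $\mathcal{C}^{b^*}$-norm, which the gap $m > d+3$ lets one dominate by the already-controlled $\Hm{m}$-norm—this is the single place the sharper hypothesis on $m$ is genuinely used.
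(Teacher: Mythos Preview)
Your proof is correct and follows essentially the same route as the paper: pick a ``good'' time $t_0$ where $\rheps(t_0)\in\Hm{m+1}$ via Corollary \ref{cor:Hmp1}, re-solve the regularized ODE at level $m+1$ from $t_0$, control the $(m{+}1)$-energy through Lemma \ref{lem:Emexist} using that the relevant $\mathcal{C}^{b^{**}}$-norm (your $b^*$) is dominated by $\|\rheps\|_{\Hm{m}}$ precisely because $m>d+3$, identify with $\rheps$ by uniqueness, and let $t_0\downarrow 0$. Your write-up is in fact more explicit than the paper's about why $m>d+3$ (rather than $m>d+2$) is needed for the embedding $\Hm{m}\hookrightarrow\mathcal{C}^{b^{**}}(\dom)$, which is exactly the point flagged in the remark after Lemma \ref{lem:supportenergy}.
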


\begin{proof}
From $\rheps \in \Hm{m+1}$ a.e. on $[0,T_m]$ (Corollary \ref{cor:Hmp1}) we can see that for each time point $s \in (0, T_m]$ (where $\rheps \in \Hm{m}$), there is a time $s'<s$ where $\rheps(s') \in \Hm{m+1}.$ Consider the ODE system for a function $\varrho$ with $\varrho (t=s') = \rheps(s') \in \Hm{m+1}$ as an initial condition. Then the same analysis can be done to find a solution $\varrho \in \mathcal{C}^1([s',s'']; \Hm{m+1})$ for some $s'' > s'.$ Furthermore, the energy $E_{m+1}^{(\eps)}$ is bounded provided $||\rheps||_{\mathcal{C}^{b^{**}}}$ is finite, where $b^{**}$ is the largest integer not exceeding $(m+1)/2+1.$ Since $m>3+d$, $||\rheps||_{\mathcal{C}^{b^{**}}}$ is finite on $[0,T_m].$ This stems from the fact that $||\rheps||_{\Hm{m}}$ is finite on $[0, T_m].$ Also, the solution stays bounded below by $\ul{\rho}_0/2.$ By uniqueness of solutions, $\varrho = \rheps$ on $[s',s'']$ with $s'' \geq T_m$ and we have $\rheps \in \mathcal{C}^1((0,T_m]; \Hm{m+1}).$
\end{proof}

\subsection{Step 3: Extracting a Convergent Subsequence in Weaker Space}

We ultimately wish to take a subsequence of $\rheps$ converging to a solution with our desired properties. Through Lemma \ref{lem:weakConverge}, we firstly show that a subsequence can be extracted that converges weakly to an element of $L^\infty([0,T_m]; \Hm{m}).$ The weak convegence on its own is insufficient and we thus also find a subsequence that converges strongly, but in a weaker space. 
Owing to the nonlinearities in the derivative terms, maneuvering the equations in such a way as to prove $\rheps$ is Cauchy in some weaker space as $\epsilon \downarrow 0$ is difficult. However, we really only require that a subsequence converges and for that we can use Aubin-Lions-Dubinski{\u\i} (Lemma \ref{thm:aubin}).

\begin{lemma}[Weakly Convergent Subsequence]
\label{lem:weakConverge}
Let $\rheps$ be defined as in Theorem \ref{thm:localreg} and let $m > 3+d$ where $m$ is an integer. Then there is a subsequence of $\rheps$ converging weakly in $L^2([0,T_m]; \Hm{m})$ to $\rho^* \in L^\infty([0,T_m]; \Hm{m}).$
\end{lemma}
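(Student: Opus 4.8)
The plan is to turn the uniform energy estimate already in hand into an a priori bound in $L^2([0,T_m];\Hm{m})$ and then apply abstract weak compactness. First I would combine Proposition \ref{prop:Ebound}(1) with Proposition \ref{prop:red}: the latter guarantees $T_{K,\eps,m} > T_m$ for every $\eps>0$, so the energy bound $\tfrac12\|\rheps(t)\|_{\Hm{m}}^2 \le E_m(t) \le \xi_m$ is valid on all of $[0,T_m)$ and is uniform in $\eps$. This yields the pointwise-in-time bound $\|\rheps(t)\|_{\Hm{m}} \le \sqrt{2\xi_m}$ for every $t\in[0,T_m)$ and every $\eps>0$. Since $[0,T_m]$ has finite measure, integrating gives $\|\rheps\|_{L^2([0,T_m];\Hm{m})}^2 \le 2\xi_m T_m$, so the family $\{\rheps\}$ is bounded in $L^2([0,T_m];\Hm{m})$ independently of $\eps$. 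As $L^2([0,T_m];\Hm{m})$ is a Hilbert space, hence reflexive, Lemma \ref{thm:alao} then produces a sequence $\eps_k\downarrow 0$ with $\rho^{(\eps_k)}\rightharpoonup\rho^*$ weakly in $L^2([0,T_m];\Hm{m})$ for some $\rho^*\in L^2([0,T_m];\Hm{m})$.

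The remaining and genuinely delicate step is to upgrade membership of the limit from $L^2$ to $L^\infty$ in time. I would argue as follows. Fix $v\in\Hm{m}$; for any measurable $A\subseteq[0,T_m]$ the function $t\mapsto\chi_A(t)\,v$ lies in $L^2([0,T_m];\Hm{m})$, so weak convergence gives $\int_A\langle\rho^{(\eps_k)}(t),v\rangle_{\Hm{m}}\,\dd t \to \int_A\langle\rho^*(t),v\rangle_{\Hm{m}}\,\dd t$. Because $|\langle\rho^{(\eps_k)}(t),v\rangle_{\Hm{m}}|\le\sqrt{2\xi_m}\,\|v\|_{\Hm{m}}$ pointwise, the left side is bounded in modulus by $\sqrt{2\xi_m}\,\|v\|_{\Hm{m}}\,|A|$ for every $k$, hence so is the limit. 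Since $A$ is arbitrary, $|\langle\rho^*(t),v\rangle_{\Hm{m}}|\le\sqrt{2\xi_m}\,\|v\|_{\Hm{m}}$ for a.e.\ $t$, with the exceptional null set a priori depending on $v$. To remove this dependence I would exploit separability of $\Hm{m}$: choosing a countable dense set $\{v_j\}\subset\Hm{m}$ and a single null set $N$ covering the exceptional sets of all the $v_j$, density yields $\|\rho^*(t)\|_{\Hm{m}}\le\sqrt{2\xi_m}$ for every $t\notin N$. Thus $\rho^*\in L^\infty([0,T_m];\Hm{m})$ with $\|\rho^*\|_{L^\infty([0,T_m];\Hm{m})}\le\sqrt{2\xi_m}$, as required.

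The main obstacle is precisely this last upgrade: weak $L^2$ convergence by itself only places $\rho^*$ in $L^2$, and the essential-supremum control must be recovered from the uniform pointwise bound on the approximants. Everything before it is a direct application of the a priori estimates together with reflexivity, whereas transferring an $L^\infty$ bound through a weak limit and then synchronizing the null sets via separability is the crux. An equivalent route would be to identify $L^\infty([0,T_m];\Hm{m})$ with the dual of $L^1([0,T_m];\Hm{m})$ (valid since $\Hm{m}$ is separable and reflexive) and extract a weak-$*$ limit of $\{\rheps\}$ by the sequential Banach--Alaoglu theorem; one then checks this weak-$*$ limit agrees with $\rho^*$ by testing against the dense class $\{\phi(t)\,v : \phi\in\mathcal{C}_c^\infty((0,T_m)),\ v\in\Hm{m}\}$, on which both notions of convergence act identically.
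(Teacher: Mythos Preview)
Your proof is correct and, for the first half (uniform $L^\infty([0,T_m];\Hm{m})$ bound $\Rightarrow$ $L^2$ bound $\Rightarrow$ weak subsequence via reflexivity), it coincides exactly with the paper's argument.

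The difference is in how the $L^\infty$-in-time membership of the weak limit $\rho^*$ is obtained. You argue directly: test the weak convergence against $\chi_A(t)v$, obtain $|\langle\rho^*(t),v\rangle_{\Hm{m}}|\le\sqrt{2\xi_m}\|v\|_{\Hm{m}}$ a.e.\ for each fixed $v$, and then synchronize the null sets using separability of $\Hm{m}$. The paper instead argues by contradiction: assuming $\rho^*\notin L^\infty([0,T_m];\Hm{m})$, it picks $Z>2K$ (where $K=\sqrt{2\xi_m}$), notes the set $T_Z=\{t:\|\rho^*(t)\|_{\Hm{m}}\ge Z\}$ has positive measure, and tests the weak convergence against the single function $f=\chi_{T_Z}\rho^*\in L^2([0,T_m];\Hm{m})$; on $T_Z$ one has $(\rho^{(\eps')},\rho^*)_{\Hm{m}}\le K\|\rho^*\|_{\Hm{m}}<\tfrac12\|\rho^*\|_{\Hm{m}}^2$, which forces $|(\rho^{(\eps')}-\rho^*,f)_{L^2([0,T_m];\Hm{m})}|\ge \tfrac12 Z^2|T_Z|$ uniformly in $\eps'$, contradicting weak convergence. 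Your route is the more standard functional-analytic one and has the virtue of yielding the sharp bound $\|\rho^*\|_{L^\infty}\le\sqrt{2\xi_m}$ directly; the paper's contradiction avoids the separability bookkeeping by building a single test function out of $\rho^*$ itself. Your alternative via sequential Banach--Alaoglu in $L^\infty([0,T_m];\Hm{m})=(L^1([0,T_m];\Hm{m}))^*$ is also valid and is arguably the cleanest of the three.
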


\begin{proof}
From Proposition \ref{prop:Ebound}, $\rheps$ is bounded in $L^\infty([0,T_m]; \Hm{m})$ by some $K>0$ for all $\eps>0$ and thus the sequence is bounded in $L^2([0,T_m]; \Hm{m}).$ By Lemma \ref{thm:alao}, there is a weakly convergent subsequence $\rho^{(\eps')}$ converging to some $\rho^* \in L^2([0,T_m]; \Hm{m}).$ 

This forces $\rho^* \in L^\infty([0,T_m]; \Hm{m}).$ Were this not the case then let $Z>2 K$ and let the set $T_Z = \{ t | \quad ||\rho^*||_{\Hm{m}} \geq Z \}.$ We have that $|T_Z|>0$. Let us denote $f(t,x) = \chi_{T_Z}(t) \rho^*(t,x) \in L^2([0,T_m]; \Hm{m})$, where $\chi_I$ denotes the characteristic function on $I$. Then for all $\eps'>0$,
\begin{align*}
|(\rho^{(\eps')}-\rho^*, f)_{L^2([0,T_m]; \Hm{m})}| &= |\int_{T_Z} (\rho^{(\eps')} - \rho^*, \rho^*)_{\Hm{m}}\dd t | \\
&= |\int_{T_Z} \left( (\rho^{(\eps')}, \rho^*)_{\Hm{m}} - ||\rho^*||_{\Hm{m}}^2 \right) \dd t | \\
&\geq |\int_{T_Z} -\frac{||\rho^*||_{\Hm{m}}^2}{2} \dd t | \\
&\geq |T_Z| Z^2/2.
\end{align*}
The second last inequality stems from $(\rho^{(\eps')}, \rho^*)_{\Hm{m}} \leq ||\rho^{(\eps')}||_{\Hm{m}} ||\rho^*||_{\Hm{m}}$ and noting that on $T_Z$, $||\rho^*||_{\Hm{m}} \geq Z = 2 K > 2 ||\rho^{(\eps')}||_{\Hm{m}}.$ Thus, $\rho^{(\eps')}$ does not converge weakly to $\rho^*$, a contradiction.
\end{proof}

\begin{lemma}[Strongly Convergent Subsequence]
\label{lem:strongConverge}
Let $\rheps$ be the subsequence obtained from Lemma \ref{lem:weakConverge} and $d/2 < m' < m$.
Then there is a subsequence of $\rheps$ converging strongly to \\ $\rho^* \in \mathcal{C}^0( [0,T_m]; \Hm{m'}).$
\end{lemma}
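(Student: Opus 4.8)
The plan is to invoke the Aubin-Lions-Dubinski{\u\i} lemma (Lemma \ref{thm:aubin}) in the endpoint case $p=\infty$, $q>1$, which yields relative compactness in $L^\infty([0,T_m];\Hm{m'})$, and then to upgrade this to $\mathcal{C}^0$ by a continuity argument. Concretely, I would take $X=\Hm{m}$, $B=\Hm{m'}$, and $Y=\Hm{m-2}$. The compact embedding $X\subset\subset B$ is exactly Lemma \ref{lem:sobin}(4) since $m>m'$, and the continuous embedding $B\hookrightarrow Y$ holds whenever $m'\geq m-2$ (the residual range $d/2<m'<m-2$ is handled at the end by a further embedding). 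Two boundedness hypotheses must be verified uniformly in $\eps$: first, that $\{\rheps\}$ is bounded in $L^\infty([0,T_m];\Hm{m})$, which is immediate from the energy control in Proposition \ref{prop:Ebound} together with the existence of all solutions on the common interval $[0,T_m]$ from Proposition \ref{prop:red}; and second, that $\{\rheps_t\}$ is bounded in $L^q([0,T_m];\Hm{m-2})$ for some $q>1$.

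The main obstacle, and the step I would treat most carefully, is the uniform bound on $\rheps_t=\Aopaut[\rheps]$. The naive estimate in Remark \ref{rmk:Abd} carries an $\eps^{-2}$ factor from the regularized Laplacian and is therefore useless as $\eps\downarrow 0$; the resolution is to measure the derivative in the weaker space $\Hm{m-2}$, which absorbs the two derivatives of the Laplacian. Using Lemma \ref{lem:moll} (the mollifier does not increase Sobolev norms and commutes with derivatives), the Laplacian term obeys $\|\delta\Jeps[\lap(u^+\Jeps[\rheps]-\kappa M(\Jeps[\rheps]))]\|_{\Hm{m-2}}\lesssim \|u^+\Jeps[\rheps]-\kappa M(\Jeps[\rheps])\|_{\Hm{m}}$, which is controlled by a polynomial in $\|\rheps\|_{\Hm{m}}$ via Lemmas \ref{lem:compm} and \ref{lem:sobbounds}. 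The remaining terms of $\Aopaut$ are bounded in $\Hm{m}\subset\Hm{m-2}$ by the same lemmas. Since $\|\rheps\|_{\Hm{m}}$ is uniformly bounded on $[0,T_m]$, this gives a uniform-in-$\eps$ bound on $\{\rheps_t\}$ in $L^\infty([0,T_m];\Hm{m-2})$, hence in $L^q$ for every finite $q>1$ on the finite interval. Aubin-Lions-Dubinski{\u\i} then produces a subsequence converging strongly in $L^\infty([0,T_m];\Hm{m'})$.

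To conclude I would upgrade the $L^\infty$-in-time convergence to continuity and identify the limit. Each $\rheps$ lies in $\mathcal{C}^1([0,T_m];\Hm{m})\subset\mathcal{C}^0([0,T_m];\Hm{m'})$ by Theorem \ref{thm:localreg} and Proposition \ref{prop:red}, so for these continuous representatives the essential supremum in time coincides with the supremum; the strong $L^\infty$ convergence is therefore genuine uniform convergence, and the Cauchy sequence has its limit in the complete space $\mathcal{C}^0([0,T_m];\Hm{m'})$. I would then identify this strong limit with the weak limit $\rho^*$ of Lemma \ref{lem:weakConverge}: strong convergence in $L^\infty([0,T_m];\Hm{m'})$ implies weak convergence in $L^2([0,T_m];\Hm{m'})$, while the given subsequence already converges weakly to $\rho^*$ there (its weak $L^2(\Hm{m})$ limit being a fortiori a weak $L^2(\Hm{m'})$ limit), so uniqueness of weak limits forces the two to agree and places $\rho^*$ in $\mathcal{C}^0([0,T_m];\Hm{m'})$. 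For the residual range $d/2<m'<m-2$, I would run the argument with a fixed $\hat m\in[m-2,m)$ and then use the continuous embedding $\Hm{\hat m}\hookrightarrow\Hm{m'}$ to transfer the strong convergence down to $\Hm{m'}$.
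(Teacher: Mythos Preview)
Your argument is correct, but the paper takes a slightly different and somewhat cleaner route. The paper chooses $Y=L^\infty(\dom)$ rather than $Y=\Hm{m-2}$: since $m'>d/2$ already gives the continuous embedding $\Hm{m'}\hookrightarrow L^\infty(\dom)$, this choice works for the full range $d/2<m'<m$ without your case split on whether $m'\geq m-2$. For the time-derivative bound, the paper simply recycles the pointwise estimate $\|\rheps_t\|_{\Lp{\infty}}\lesssim 1+\|\rheps\|_{\Hm{m}}^{m}$ that was established in the proof of Proposition~\ref{prop:red}, which is already $\eps$-independent. Your alternative of absorbing the Laplacian by measuring in $\Hm{m-2}$ is equally valid and arguably more systematic (it does not rely on having noticed the earlier $L^\infty$ estimate), but it costs you the extra embedding step at the end. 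Your explicit identification of the strong limit with the weak limit $\rho^*$ via uniqueness of weak limits in $L^2([0,T_m];\Hm{m'})$ is a detail the paper leaves implicit; the upgrade from $L^\infty$-in-time convergence to $\mathcal{C}^0$ is handled the same way in both proofs.
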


\begin{proof}
We note that $\Hm{m}$ is compactly embedded in $\Hm{m'}$, which is continuously embedded in $\Lp{\infty}$ (Lemma \ref{lem:sobin}). We also have that $\rheps$ is bounded in $L^\infty([0,T_m]; \Hm{m})$ by Proposition \ref{prop:Ebound}. Thus, if we can show that $\rheps_t$ is bounded in $L^\infty([0,T_m]; \Lp{\infty})$ then, by Aubin-Lions-Dubinski{\u\i} (Lemma \ref{thm:aubin}), there is a strongly convergent subsequence in $L^\infty([0,T]; \Hm{m'}).$ From the proof of Proposition \ref{prop:red},
\begin{align*}
||\rheps_t||_{\Lp{\infty}} &\lesssim 1 + ||\rheps||_{\Hm{m}}^{m}.
\end{align*}
With this, we can conclude there is a strongly convergent subsequence of $\rheps$ converging to $\rho^* \in L^\infty([0,T_m]; \Hm{m'})$ as $\eps \downarrow 0.$ Since each term in the sequence is continuous over $[0,T_m]$ and the convergence is in the $L^\infty$-norm over $[0,T_m]$, by the Uniform Limit Theorem, the limit is also continuous and hence in $\rho^* \in \mathcal{C}^0([0,T_m]; \Hm{m'}).$
\end{proof}

\subsection{Step 4: Obtaining a Strong Solution in Target Space}

To show that the limit $\rho^* \in \mathcal{C}^0([0,T_m]; \Hm{m})$, we work with the weak topology. The key insight for this stage is that continuity in the norm plus weak continuity gives strong continuity.

\begin{lemma}[Weak Continuity]
\label{lem:weak_cont}
Let $\rho_0 \in \Hm{m}$ with integers $m>m'$ with $m>d^* = 3+d$, $m' > d/2$ and 
let $\rheps$ and $\rho^*$ be the subsequence and limit found in Lemma \ref{lem:strongConverge}. Then $\forall \eps>0, \rheps \in \mathcal{C}_W([0,T_m]; \Hm{m})$ and $\rheps \to \rho^*$ in $\mathcal{C}_W([0,T_m]; \Hm{m})$ as $\eps \downarrow 0$.
\end{lemma}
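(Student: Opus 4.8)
The statement contains two claims --- weak continuity of each $\rheps$ and uniform-in-time weak convergence $\rheps \to \rho^*$ --- and I would treat them in that order. The first is essentially free: by Theorem \ref{thm:localreg} we have $\rheps \in \mathcal{C}^1([0,T_{K,\eps,m}); O_K)$, and by Proposition \ref{prop:red} the maximal existence time obeys $T_{K,\eps,m} > T_m$, so $[0,T_m]$ is a compact subinterval on which $\rheps$ is (strongly) continuous into $\Hm{m}$. Strong continuity trivially forces weak continuity, since for any fixed $g \in \Hm{m}$ and $t_n \to t$ one has $|(\rheps(t_n) - \rheps(t), g)_{\Hm{m}}| \le \|\rheps(t_n) - \rheps(t)\|_{\Hm{m}} \|g\|_{\Hm{m}} \to 0$. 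Hence $\rheps \in \mathcal{C}_W([0,T_m]; \Hm{m})$ for every $\eps > 0$.

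For the convergence I would lean on the two uniform estimates already in hand: $\sup_{\eps > 0} \sup_{t \in [0,T_m]} \|\rheps(t)\|_{\Hm{m}} \le K$ (Proposition \ref{prop:Ebound}) and $\sup_{t \in [0,T_m]} \|\rheps(t) - \rho^*(t)\|_{\Hm{m'}} \to 0$ as $\eps \downarrow 0$ (Lemma \ref{lem:strongConverge}). A preliminary observation is that $\rho^*(t) \in \Hm{m}$ with $\|\rho^*(t)\|_{\Hm{m}} \le K$ for \emph{every} $t$, not merely a.e.: fixing $t$, the bounded sequence $\{\rheps(t)\}$ has by Lemma \ref{thm:alao} a further subsequence converging weakly in $\Hm{m}$, its limit must coincide with the $\Hm{m'}$-strong limit $\rho^*(t)$ by uniqueness of limits under the continuous inclusion $\Hm{m} \hookrightarrow \Hm{m'}$, and weak lower semicontinuity of the norm yields the bound; this makes $(\rho^*(t), g)_{\Hm{m}}$ meaningful for all $t$. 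Now fix $g \in \Hm{m}$ and $\eta > 0$, and pick a trigonometric polynomial $g_\eta \in \Hm{2m}$ with $\|g - g_\eta\|_{\Hm{m}} < \eta$. Splitting $(\rheps(t) - \rho^*(t), g)_{\Hm{m}} = (\rheps(t) - \rho^*(t), g - g_\eta)_{\Hm{m}} + (\rheps(t) - \rho^*(t), g_\eta)_{\Hm{m}}$, the first term is bounded by $(\|\rheps(t)\|_{\Hm{m}} + \|\rho^*(t)\|_{\Hm{m}})\|g - g_\eta\|_{\Hm{m}} \le 2K\eta$ uniformly in $t$ and $\eps$. For the second, because $g_\eta$ is smooth and $\dom = \Td$ has no boundary, I integrate by parts in each summand of $\sum_{|\alpha| \le m} (\partial^\alpha(\rheps - \rho^*), \partial^\alpha g_\eta)_{\Lp{2}}$ to throw all derivatives onto $g_\eta$, rewriting it as $(\rheps(t) - \rho^*(t), \tilde g_\eta)_{\Lp{2}}$ with $\tilde g_\eta := \sum_{|\alpha| \le m} (-1)^{|\alpha|} \partial^{2\alpha} g_\eta \in \Lp{2}$. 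Hence this term is at most $\|\tilde g_\eta\|_{\Lp{2}} \sup_t \|\rheps(t) - \rho^*(t)\|_{\Hm{m'}}$, which vanishes as $\eps \downarrow 0$ uniformly in $t$. Taking $\sup_t$ and then $\limsup_{\eps \downarrow 0}$ leaves $2K\eta$, and letting $\eta \downarrow 0$ gives uniform weak convergence, i.e. $\rheps \to \rho^*$ in $\mathcal{C}_W([0,T_m]; \Hm{m})$; moreover, since each $t \mapsto (\rheps(t), g)_{\Hm{m}}$ is continuous and the convergence is uniform in $t$, the limit $t \mapsto (\rho^*(t), g)_{\Hm{m}}$ is continuous as well, so $\rho^*$ also lies in $\mathcal{C}_W$.

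The genuine obstacle is the jump from strong convergence in the weaker norm $\Hm{m'}$ to uniform weak convergence against the stronger $\Hm{m}$ pairing; the density-plus-integration-by-parts device is exactly what bridges it, with the uniform $\Hm{m}$ energy bound absorbing the high-frequency tail $g - g_\eta$ and the periodicity of $\Td$ guaranteeing the integration by parts produces no boundary terms. The only other point demanding care is securing $\rho^*(t) \in \Hm{m}$ pointwise in $t$, which the weak-compactness and lower-semicontinuity argument above supplies.
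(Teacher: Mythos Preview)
Your proof is correct and follows essentially the same strategy as the paper: a density argument in which the uniform $\Hm{m}$ energy bound controls the tail and the strong $\Hm{m'}$ convergence handles the smooth approximant. The only cosmetic difference is that the paper works with the dual pairing $\int_{\dom} \phi(\rheps - \rho^*)\,\dd x$ for $\phi \in \Hm{-m}$ and invokes density of $\Hm{-m'}$ in $\Hm{-m}$, whereas you use the equivalent inner-product formulation $(\rheps - \rho^*, g)_{\Hm{m}}$, approximate $g$ by a trigonometric polynomial, and integrate by parts to reduce to an $\Lp{2}$ pairing; your extra step establishing $\rho^*(t) \in \Hm{m}$ pointwise via weak compactness is a welcome bit of care that the paper leaves implicit.
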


\begin{proof}
Since $\rheps \in \mathcal{C}^0([0, T_m]; \Hm{m})$, we also have $\rheps \in \mathcal{C}_W([0, T_m], \Hm{m}).$ Additionally, from $\rheps \rightarrow \rho^*$ uniformly on $\mathcal{C}^0([0,T_m]; \Hm{m'})$, we have that for any $\tilde \phi \in \Hm{-m'}$, $\int_{\dom} \tilde \phi ( \rheps - \rho^*) \dd x \rightarrow 0$ as $\eps \downarrow 0.$

Let $\max\{ ||\rho^*||_{L^\infty([0, T_m]; \Hm{m})}, \sup_{\eps>0} ||\rheps||_{L^\infty([0,T_m]; \Hm{m})} \} \leq U.$

We next note that for $m'<m$, $\Hm{-m'}$ is a dense subset of $\Hm{-m}$ so that given any $\phi \in \Hm{-m}$ and $\nu > 0$, there is $\tilde \phi \in \Hm{-m'}$ with $||\phi - \tilde \phi||_{\Hm{-m}} < \nu/(3U).$

We wish to prove that for any $\phi \in \Hm{-m},$ $\int_{\dom} \phi ( \rheps - \rho^*) \dd x \rightarrow 0$ as $\eps \downarrow 0.$ Let $\nu>0$ and choose $\tilde \phi \in \Hm{-m'}$ so $||\phi - \tilde \phi||_{\Hm{-m}} < \nu/(3U).$ Choose $\eps'$ so that $\eps < \eps'$ gives $|\int_{\dom} \tilde \phi ( \rheps - \rho^*) \dd x| < \nu/3.$ Then if $\eps < \eps'$,
\begin{align*}
|\int_{\dom} \phi ( \rheps - \rho^*) \dd x| &\leq |\int_{\dom} \underbrace{(\phi - \tilde \phi)}_{\in \Hm{-m}} \underbrace{\rheps}_{\in \Hm{m}}| \dd x \\
& +| \int_{\dom} \underbrace{\tilde \phi}_{\in \Hm{-m'}} \underbrace{(\rheps - \rho^*)}_{\text{\quad weak } \Hm{m'} \text{ convergence}} \dd x| \\ &+ \int_{\dom} \underbrace{(\tilde \phi - \phi)}_{\in \Hm{-m}} \underbrace{\rho^*}_{\in \Hm{m}} \dd x| \\
&\leq ||\phi - \tilde \phi||_{\Hm{-m}} ||\rheps||_{\Hm{m}} + |\int_{\dom} \tilde \phi ( \rheps - \rho^*) \dd x | \\ &+ ||\phi - \tilde \phi||_{\Hm{-m}} ||\rho^*||_{\Hm{m}} \\
&\leq \frac{\nu}{3U} U + \frac{\nu}{3} + \frac{\nu}{3U}U \\
&= \nu.
\end{align*}
\end{proof}

\begin{lemma}[Norm Continuity Plus Weak Continuity Gives Strong Continuity]
\label{lem:norm_weak}
Suppose $f \in \mathcal{C}_W([0,T]; \Hm{m})$ and $||f||_{\Hm{m}} \in \mathcal{C}^0([0,T]).$ Then $f \in \mathcal{C}^0([0,T]; \Hm{m}).$
\end{lemma}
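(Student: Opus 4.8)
The plan is to exploit the Hilbert space structure of $\Hm{m}$, under which the weak topology is governed by the inner product $\langle \cdot, \cdot \rangle_{\Hm{m}}$ via the Riesz representation theorem. Concretely, the hypothesis $f \in \mathcal{C}_W([0,T]; \Hm{m})$ means that for every fixed $g \in \Hm{m}$ the scalar map $t \mapsto \langle f(t), g \rangle_{\Hm{m}}$ is continuous on $[0,T]$, since every bounded linear functional on $\Hm{m}$ arises this way. Strong continuity at a point then amounts to showing $||f(t) - f(t_0)||_{\Hm{m}} \to 0$ as $t \to t_0$.

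First I would fix $t_0 \in [0,T]$ and expand the squared distance using the inner product:
$$||f(t) - f(t_0)||_{\Hm{m}}^2 = ||f(t)||_{\Hm{m}}^2 - 2 \langle f(t), f(t_0)\rangle_{\Hm{m}} + ||f(t_0)||_{\Hm{m}}^2.$$
Then I would take the limit $t \to t_0$ of each of the three terms in turn. The first term converges to $||f(t_0)||_{\Hm{m}}^2$ by the hypothesis that $||f||_{\Hm{m}} \in \mathcal{C}^0([0,T])$. The middle cross term is dispatched by weak continuity: choosing the \emph{fixed} test element $g = f(t_0)$, the map $t \mapsto \langle f(t), f(t_0)\rangle_{\Hm{m}}$ is continuous, hence converges to $\langle f(t_0), f(t_0)\rangle_{\Hm{m}} = ||f(t_0)||_{\Hm{m}}^2$. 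The final term is constant in $t$.

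Combining the three limits, the right-hand side tends to $||f(t_0)||_{\Hm{m}}^2 - 2||f(t_0)||_{\Hm{m}}^2 + ||f(t_0)||_{\Hm{m}}^2 = 0$, so $||f(t) - f(t_0)||_{\Hm{m}} \to 0$, which is continuity at $t_0$; at the endpoints one uses the corresponding one-sided limits. Since $t_0 \in [0,T]$ was arbitrary, we conclude $f \in \mathcal{C}^0([0,T]; \Hm{m})$.

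I expect no substantial obstacle here: this is the standard fact that in a Hilbert space weak convergence together with convergence of norms upgrades to strong convergence (the Radon--Riesz, or Kadec--Klee, property). The only point deserving care is the identification, via Riesz representation, of the duality pairing defining $\mathcal{C}_W$ with the $\Hm{m}$ inner product, which is legitimate precisely because $\Hm{m}$ is a Hilbert space; this is what licenses inserting the fixed element $f(t_0)$ as a test functional in the cross term.
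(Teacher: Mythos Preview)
Your proof is correct and is essentially identical to the paper's own argument: both expand $||f(t)-f(t_0)||_{\Hm{m}}^2$ via the inner product, use norm continuity for the $||f(t)||_{\Hm{m}}^2$ term, and use weak continuity tested against the fixed element $f(t_0)$ for the cross term. Your remark about identifying the duality pairing with the $\Hm{m}$ inner product via Riesz representation matches the paper's equivalent formulation of $\mathcal{C}_W$.
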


\begin{proof}
Let $t_0 \in [0,T]$. We wish to show $\lim_{t \rightarrow t_0} f(t) = f(t_0)$ where the limit is understood to be a left-/right-sided limit at the right-/left-endpoint. We compute
\begin{align*}
\lim_{t \rightarrow t_0} ||f(t) - f(t_0)||_{\Hm{m}}^2 &= \lim_{t \rightarrow t_0} (f(t) - f(t_0), f(t) - f(t_0))_{\Hm{m}} \\
&= \lim_{t \rightarrow 0} \Big( ||f(t)||_{\Hm{m}}^2 + ||f(t_0)||_{\Hm{m}}^2 \\ &- (f(t), f(t_0))_{\Hm{m}} - (f(t_0), f(t))_{\Hm{m}} \Big) \\
&= \lim_{t \rightarrow 0} \Big( ||f(t_0)||_{\Hm{m}}^2 + ||f(t_0)||_{\Hm{m}}^2 \\&- (f(t), f(t_0))_{\Hm{m}} - (f(t_0), f(t))_{\Hm{m}} \Big) \\
&= \lim_{t \rightarrow 0} \Big( (f(t_0), f(t_0) - f(t))_{\Hm{m}} \\ &+ (f(t_0) - f(t), f(t_0))_{\Hm{m}} \Big) \\
&= 0.
\end{align*}
To reach equality 3, we used the continuity of $||f||_{\Hm{m}}.$ To reach equality 5, we used the weak continuity and that $f(t)$ converges weakly to $f(t_0)$ by testing against $f(t_0) \in \Hm{m}.$ 
\end{proof}

To obtain our strong solution result, we first prove a proposition asserting that solutions stemming from initial data $\rho_0 \in \Hm{m}$ in fact stay in $\Hm{m}$ over their interval of existence.

\begin{proposition}
\label{prop:HmHm}
Let $\rho_0 \in \Hm{m}$ with $m>d^*:3+d$ an integer and let $\rheps$ be the subsquence of Lemma \ref{lem:strongConverge} and $\rho^*$ and $T_m$ be as in Lemma \ref{lem:strongConverge}. Then $\rho^* \in \mathcal{C}^0([0,T_m]; \Hm{m}).$
\end{proposition}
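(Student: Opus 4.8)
The plan is to combine the two convergence results already in hand: the strong convergence $\rheps \to \rho^*$ in $\mathcal{C}^0([0,T_m];\Hm{m'})$ (Lemma \ref{lem:strongConverge}) and the weak convergence $\rheps \to \rho^*$ in $\mathcal{C}_W([0,T_m];\Hm{m})$ (Lemma \ref{lem:weak_cont}). The target membership $\rho^* \in \mathcal{C}^0([0,T_m];\Hm{m})$ will then follow from Lemma \ref{lem:norm_weak} once I can verify its two hypotheses for $f = \rho^*$: namely that $\rho^* \in \mathcal{C}_W([0,T_m];\Hm{m})$ and that $t \mapsto ||\rho^*(t)||_{\Hm{m}}$ is continuous. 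The first of these is exactly the content of Lemma \ref{lem:weak_cont}, so the real work is establishing continuity of the $\Hm{m}$-norm.

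First I would record the weak continuity: since $\rho^* \in \mathcal{C}_W([0,T_m];\Hm{m})$, for each fixed $t_0$ and each test functional, $\rho^*(t)$ converges weakly to $\rho^*(t_0)$ as $t \to t_0$; by weak lower semicontinuity of the norm this already gives $\liminf_{t \to t_0} ||\rho^*(t)||_{\Hm{m}} \geq ||\rho^*(t_0)||_{\Hm{m}}$. The missing half is the reverse inequality, $\limsup_{t \to t_0} ||\rho^*(t)||_{\Hm{m}} \leq ||\rho^*(t_0)||_{\Hm{m}}$, i.e.\ upper semicontinuity of the norm. The natural route to upper semicontinuity is the energy bound from Proposition \ref{prop:Ebound}: the map $t \mapsto E_m(t)$ majorizing $\tfrac12\|\rheps\|_{\Hm{m}}^2$ is continuous and independent of $\eps$, and passing this bound to the limit controls $\|\rho^*(t)\|_{\Hm{m}}$ from above near $t_0$. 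I expect to use that $\rho^*$ solves the limiting equation (or at least satisfies the same differential inequality in a weak/integrated form) so that the energy estimate transfers to $\rho^*$; alternatively one argues directly that $t\mapsto\tfrac12\|\rho^*(t)\|_{\Hm{m}}^2$ is bounded above by the same $E_m(t)$ and exploits the continuity of $E_m$ together with the lower bound to pin the norm continuity.

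The main obstacle is precisely securing the upper semicontinuity of $t \mapsto \|\rho^*(t)\|_{\Hm{m}}$: weak convergence only ever yields the easy (lower semicontinuity) direction for free, and there is no general reason a weak-$\mathcal{C}_W$ limit should have a continuous full-strength norm. This is where the uniform energy control of Proposition \ref{prop:Ebound} must be invoked carefully, since it is the only ingredient that upgrades lower semicontinuity to genuine continuity. A clean way to close the argument is to show that $\tfrac12\|\rho^*(t)\|_{\Hm{m}}^2$ satisfies both $\leq E_m(t)$ (from the passed-to-the-limit energy bound) and the matching lower semicontinuity, and that $E_m$ agrees with the norm at the relevant points, forcing continuity; then Lemma \ref{lem:norm_weak} applies and delivers $\rho^* \in \mathcal{C}^0([0,T_m];\Hm{m})$. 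One subtlety to watch is behavior at the endpoints $t=0$ and $t=T_m$, where the limits in Lemma \ref{lem:norm_weak} are one-sided, and ensuring the initial condition $\rho^*(0)=\rho_0$ is attained in the full $\Hm{m}$ norm rather than merely weakly.
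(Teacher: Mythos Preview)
Your overall strategy---weak continuity plus norm continuity, then invoke Lemma \ref{lem:norm_weak}---is exactly the paper's, and your treatment of the endpoint $t=0$ via the energy bound is essentially the paper's Lemma \ref{lem:sol_cts}. The gap is at interior points $t_0\in(0,T_m]$. There your proposed route to upper semicontinuity does not close: the majorant $E_m(t)$ from Proposition \ref{prop:Ebound} satisfies $E_m(0)=\tfrac12\|\rho_0\|_{\Hm{m}}^2$, but at a general $t_0$ one only has $\tfrac12\|\rho^*(t_0)\|_{\Hm{m}}^2\le E_m(t_0)$ with possibly strict inequality, so continuity of $E_m$ gives you nothing sharp enough. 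Your fallback---restart the energy inequality from $t_0$ using that $\rho^*$ solves the limiting PDE---is circular (that fact is only established in Step 5, after this proposition) and in any case would yield only \emph{right}-continuity of the norm, not left-continuity, since the differential inequality is one-sided in time.

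The paper fills this gap with a different mechanism: the parabolic smoothing recorded in Lemma \ref{cor:addedreg}, which upgrades the approximators to $\rheps\in\mathcal{C}^0((0,T_m];\Hm{m+1})$ for every $\eps$. With one extra derivative available on $(0,T_m]$, the compactness argument of Lemma \ref{lem:strongConverge} can be rerun with $m'=m$ (now $m<m+1$), producing \emph{strong} convergence in $\mathcal{C}^0((\theta,T_m];\Hm{m})$ for each $\theta>0$, hence $\rho^*\in\mathcal{C}^0((0,T_m];\Hm{m})$ directly. Norm continuity on $(0,T_m]$ is then immediate, and only the single endpoint $t=0$ requires the energy/weak-lower-semicontinuity argument you outlined. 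You should invoke Lemma \ref{cor:addedreg} (or rederive the $\Hm{m+1}$ gain) to handle the interior; the energy majorant alone will not do it.
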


To prove Proposition \ref{prop:HmHm}, we prove a lemma affirming that $||\rho^*||_{\Hm{m}}$ is continous at $t=0$. 

\begin{lemma}[Continuity Properties of Solution] 
\label{lem:sol_cts}
Let $\rho_0 \in \Hm{m}$ with $m>d^*:=3+d$ an integer and let $\rheps$ and $\rho^*$ be defined as in Lemma \ref{lem:strongConverge}. Then
$||\rho^*||_{\Hm{m}}$ is strongly right-continuous at $t=0$.
\end{lemma}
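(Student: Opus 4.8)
The plan is a sandwich argument on the real-valued function $t \mapsto ||\rho^*(t)||_{\Hm{m}}$ near $t=0$: I would bound its $\limsup$ from above by $||\rho_0||_{\Hm{m}}$ and its $\liminf$ from below by the same quantity, so that the right limit exists and equals $||\rho_0||_{\Hm{m}} = ||\rho^*(0)||_{\Hm{m}}$. First I would record two facts available from earlier results. By Lemma \ref{lem:weak_cont}, at each fixed $t$ the chosen subsequence satisfies $\rheps(t) \rightharpoonup \rho^*(t)$ weakly in $\Hm{m}$, and moreover $\rho^* \in \mathcal{C}_W([0,T_m]; \Hm{m})$. Since $\Hm{m}$ is a Hilbert space, its norm is weakly lower semicontinuous, so $||\rho^*(t)||_{\Hm{m}} \leq \liminf_{\eps \downarrow 0} ||\rheps(t)||_{\Hm{m}}$ for each $t$. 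I would also note that $\rho^*(0) = \rho_0$: the strong convergence of Lemma \ref{lem:strongConverge} in $\mathcal{C}^0([0,T_m]; \Hm{m'})$ forces $\rho^*(0) = \lim_{\eps \downarrow 0} \rheps(0) = \rho_0$ in $\Hm{m'}$, and since both $\rho^*(0)$ and $\rho_0$ lie in $\Hm{m}$ they coincide there.

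For the upper bound I would combine weak lower semicontinuity with the uniform energy control. Proposition \ref{prop:Ebound} gives $\tfrac{1}{2}||\rheps(t)||_{\Hm{m}}^2 \leq E_m(t)$ for all $\eps>0$, with $E_m$ continuous on $[0,\varpi_m]$ and $E_m(0) = \tfrac{1}{2}||\rho_0||_{\Hm{m}}^2$. Passing to the $\liminf$ in $\eps$ and invoking the weak lower semicontinuity above yields the pointwise bound $||\rho^*(t)||_{\Hm{m}}^2 \leq 2 E_m(t)$. Taking $t \downarrow 0$ and using continuity of $E_m$ then gives $\limsup_{t \downarrow 0} ||\rho^*(t)||_{\Hm{m}}^2 \leq 2 E_m(0) = ||\rho_0||_{\Hm{m}}^2$.

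For the lower bound I would use only the weak continuity of $\rho^*$. Because $\rho^* \in \mathcal{C}_W([0,T_m]; \Hm{m})$ and $\rho^*(0) = \rho_0$, we have $\rho^*(t) \rightharpoonup \rho_0$ in $\Hm{m}$ as $t \downarrow 0$; weak lower semicontinuity of the norm (testing the weak limit against itself) then yields $||\rho_0||_{\Hm{m}} \leq \liminf_{t \downarrow 0} ||\rho^*(t)||_{\Hm{m}}$. Chaining the two estimates gives $\limsup_{t\downarrow 0}||\rho^*(t)||_{\Hm{m}} \leq ||\rho_0||_{\Hm{m}} \leq \liminf_{t\downarrow 0}||\rho^*(t)||_{\Hm{m}}$, so the right limit of $||\rho^*(t)||_{\Hm{m}}$ exists and equals $||\rho_0||_{\Hm{m}}$, which is the asserted right-continuity at $t=0$.

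I expect the main obstacle to be the upper-bound step: I must ensure the pointwise energy bound on the approximators genuinely transfers to the limit, which rests on having weak convergence $\rheps(t) \rightharpoonup \rho^*(t)$ at each individual time $t$ rather than merely the time-integrated weak convergence in $L^2([0,T_m];\Hm{m})$ of Lemma \ref{lem:weakConverge}; hence the careful use of the $\mathcal{C}_W$ statement of Lemma \ref{lem:weak_cont} together with Hilbert-space weak lower semicontinuity is essential. A secondary subtlety worth pinning down is the identification $\rho^*(0) = \rho_0$ as elements of $\Hm{m}$, since $\rho^*$ is a priori only an $L^\infty([0,T_m];\Hm{m})$ object, so its pointwise values must be read off the continuous-in-$\Hm{m'}$, weakly-continuous-in-$\Hm{m}$ representative supplied by Lemmas \ref{lem:strongConverge} and \ref{lem:weak_cont}.
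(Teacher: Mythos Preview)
Your proposal is correct and follows essentially the same approach as the paper: both bound $\limsup_{t\downarrow 0}\|\rho^*(t)\|_{\Hm{m}}$ above via the uniform energy estimate $\tfrac12\|\rheps(t)\|_{\Hm{m}}^2\le E_m(t)$ from Proposition~\ref{prop:Ebound} together with weak lower semicontinuity of the $\Hm{m}$-norm (using the pointwise weak convergence from Lemma~\ref{lem:weak_cont}), and both bound $\liminf_{t\downarrow 0}\|\rho^*(t)\|_{\Hm{m}}$ below by testing the weak-$\mathcal{C}_W$ continuity of $\rho^*$ against $\rho_0$. Your write-up is, if anything, more explicit than the paper's in naming weak lower semicontinuity and in pinning down the identification $\rho^*(0)=\rho_0$.
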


\begin{proof}
From Eq. \eqref{eq:Eintbd}, for all $\eps>0, \frac{1}{2} ||\rheps||_{\Hm{m}}^2 \leq E_m(t)$ with $E_m(0) = \frac{1}{2} ||\rho_0||_{\Hm{m}}^2$. Then $$\overline{\lim}_{t \downarrow 0} ||\rho^*(t)||_{\Hm{m}} \leq \overline{\lim}_{t \downarrow 0} \overline{\lim}_{\eps \downarrow 0} ||\rheps(t)||_{\Hm{m}} \leq \overline{\lim}_{t \downarrow 0} \sqrt{2 E_m(t)} = ||\rho_0||_{\Hm{m}}.$$

Also, since $\rho^* \in \mathcal{C}_W([0,T_m]; \Hm{m})$, we have $\underline{\lim}_{t \downarrow 0} ||\rho^*(t)||_{\Hm{m}} \geq ||\rho_0||_{\Hm{m}}.$ To see this, note that with $\rho_0 \in \Hm{m}$, 
\begin{align*} ||\rho_0||_{\Hm{m}}^2 &= \lim_{t \downarrow 0} (\rho^*(t), \rho_0)_{\Hm{m}} = \underline{\lim}_{t \downarrow 0} (\rho^*(t), \rho_0)_{\Hm{m}} \\
&\leq \underline{\lim}_{t \downarrow 0} ( ||\rho^*(t)||_{\Hm{m}} ||\rho_0||_{\Hm{m}} )
= ||\rho_0||_{\Hm{m}} \underline{\lim}_{t \downarrow 0} ||\rho^*(t)||_{\Hm{m}}.\end{align*} 
Together these give $$\lim_{t \downarrow 0} ||\rho^*||_{\Hm{m}} = ||\rho_0||_{\Hm{m}} = ||\rho^*(0)||_{\Hm{m}}.$$
\end{proof}

Now, we complete the proof of Proposition \ref{prop:HmHm}.

\begin{proof}[Proof of Proposition \ref{prop:HmHm}]
Let $0 < \theta < T_m.$ Then $\rheps \in \mathcal{C}^0((\theta,T_m]; \Hm{m'})$ for all $m'<m+1$ and, in particular, $\rheps$ has a subsequence (as per Lemma \ref{lem:strongConverge}) that converges in $\mathcal{C}^0((\theta,T_m]; \Hm{m})$ and thus we have $\rho^* \in \mathcal{C}^0((0, T_m]; \Hm{m})$ and $||\rho^*||_{\Hm{m}}$ is continuous on $(0, T_m].$ From Lemma \ref{lem:sol_cts}, $||\rho^*||_{\Hm{m}}$ is continuous at $t=0$ so $||\rho^*||_{\Hm{m}} \in \mathcal{C}^0([0, T_m])$. Combined with the weak continuity, we have $\rho^* \in \mathcal{C}^0([0, T_m]; \Hm{m}).$
\end{proof} 

\subsection{Step 5: Convergence Analysis}

In this final step, we finish proving our main result. The most important element is that the mollified equation, with its derivatives, in fact converges uniformly to an equation for $\rho^*$ consistent with \sysaut. Before proving Theorem \ref{thm:local_exist}, we first prove establish a couple of helpful results:

\begin{lemma}[Limiting Derivatives]
\label{lem:limder}
Let $m> d^* := 3+d$ be an integer and suppose that $\rheps$ is the subsequence of approximators converging to $\rho^*$ as in Lemma \ref{lem:strongConverge}. Then for all multi-indices $|\alpha| \leq 2$, $\lim_{\eps \downarrow 0} \Jeps [\partial^\alpha \rho^{(\epsilon)}] = \partial^\alpha \rho^*$ uniformly on $\dom$ for a.e. $t \in [0,T_m].$
\end{lemma}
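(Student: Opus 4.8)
The plan is to exploit the strong convergence $\rheps \to \rho^*$ in $\mathcal{C}^0([0,T_m]; \Hm{m'})$ from Lemma \ref{lem:strongConverge} together with the uniform-approximation property of the mollifier from Lemma \ref{lem:moll}. Fix a multi-index $|\alpha| \leq 2$ and a time $t$. First I would split, by the triangle inequality,
\[
\|\Jeps[\partial^\alpha \rheps] - \partial^\alpha \rho^*\|_{\Lp{\infty}} \leq \|\Jeps[\partial^\alpha \rheps - \partial^\alpha \rho^*]\|_{\Lp{\infty}} + \|\Jeps[\partial^\alpha \rho^*] - \partial^\alpha \rho^*\|_{\Lp{\infty}},
\]
so that the error separates into a ``convergence of $\rheps$'' piece and a ``convergence of the mollifier'' piece, each treated independently.

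For the first piece, Lemma \ref{lem:moll}(1) gives the contraction $\|\Jeps[v]\|_{\Lp{\infty}} \leq \|v\|_{\Lp{\infty}}$, hence $\|\Jeps[\partial^\alpha \rheps - \partial^\alpha \rho^*]\|_{\Lp{\infty}} \leq \|\partial^\alpha(\rheps - \rho^*)\|_{\Lp{\infty}}$. Choosing $m'$ with $d/2 + 2 < m' < m$ --- possible since $m > d+3$ forces $m-1 > d+2 > d/2+2$ --- the H\"older embedding of Lemma \ref{lem:sobin}(5) gives $\Hm{m'} \hookrightarrow \mathcal{C}^2(\dom)$, so $\|\partial^\alpha(\rheps - \rho^*)\|_{\Lp{\infty}} \lesssim \|\rheps - \rho^*\|_{\Hm{m'}}$. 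By Lemma \ref{lem:strongConverge} the right-hand side tends to $0$ uniformly in $t$ as $\eps \downarrow 0$, so this piece vanishes.

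For the second piece, I would use that $\rho^*(t) \in \Hm{m}$ --- which holds for every $t$ by Proposition \ref{prop:HmHm}, and certainly for a.e.\ $t$ --- so that $\partial^\alpha \rho^*(t) \in \Hm{m-2}$. Because $m > d+3$ we have $m - 2 - d/2 > d/2$, and the H\"older embedding places $\partial^\alpha \rho^*(t)$ in $\mathcal{C}^\ell(\dom)$ for some $\ell > d/2$. Lemma \ref{lem:moll}(1) then yields $\Jeps[\partial^\alpha \rho^*(t)] \to \partial^\alpha \rho^*(t)$ uniformly on $\dom$ as $\eps \downarrow 0$. Combining the two pieces finishes the claim for each such $t$. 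The only delicate point is the regularity bookkeeping: one must verify that $m > d+3$ leaves simultaneous room to pick $m'$ both below $m$ (to apply strong convergence) and above $d/2+2$ (to embed into $\mathcal{C}^2$), and that $\rho^*$ is smooth enough ($\mathcal{C}^\ell$ with $\ell>d/2$) for the mollifier to converge uniformly. The rate in $\eps$ of the second piece may depend on $t$, which is why the conclusion is stated pointwise (for a.e.\ $t$) rather than uniformly in $t$.
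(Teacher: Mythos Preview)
Your proposal is correct and follows essentially the same approach as the paper: the identical triangle-inequality split into $\Jeps[\partial^\alpha(\rheps-\rho^*)]$ and $\Jeps[\partial^\alpha\rho^*]-\partial^\alpha\rho^*$, the same use of the $L^\infty$-contraction of $\Jeps$ from Lemma~\ref{lem:moll}(1) combined with the Sobolev/H\"older embedding $\Hm{m'}\hookrightarrow\mathcal{C}^2(\dom)$ for $m'>d/2+2$, and the same appeal to $\rho^*(t)\in\Hm{m}$ (via Proposition~\ref{prop:HmHm}) to place $\partial^\alpha\rho^*(t)$ in $\mathcal{C}^\ell$ with $\ell>d/2$ so that $\Jeps[\partial^\alpha\rho^*]\to\partial^\alpha\rho^*$ uniformly. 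Your added bookkeeping remarks (verifying that $m>d+3$ leaves room for such an $m'$, and noting why the conclusion is only for a.e.\ $t$) are helpful clarifications the paper leaves implicit.
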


\begin{proof}
It is done by computation and bounding an error. We use Lemmas \ref{lem:moll} and \ref{lem:sobin}.
\begin{align*}
\Jeps [\partial^\alpha \rho^{(\epsilon)}] &= \Jeps [\partial^\alpha \rho^*] + \Jeps [\partial^\alpha (\rho^{(\epsilon)} - \rho^*)].
\end{align*}
From $\rho^* \in \mathcal{C}^0([0,T_m]; \Hm{m})$, we have $D^2 \rho^*(t) \in \mathcal{C}^{\ell}(\dom)$ where $\ell>d/2$. Thus, for $0 \leq |\alpha| \leq 2,$ $\lim_{\eps \downarrow 0} \Jeps [\partial^\alpha \rho^*(t)] = \partial^\alpha \rho^*(t)$ uniformly.

Let $m>m'>2+d/2$. We have $\rheps \to \rho^*$ in $\Hm{m'}$ on $[0,T_m]$ with $\partial^\alpha (\rheps - \rho^*)$ again $\in \mathcal{C}^{\ell}$, $\ell>d/2$. Then with Lemma \ref{lem:moll} item 1,
\begin{align*}
||\Jeps [\partial^\alpha (\rho^{(\epsilon)} - \rho^*)]||_{\Lp{\infty}} &\leq ||\partial^\alpha ( \rheps - \rho^*) ||_{\Lp{\infty}} \\
& \lesssim ||\rheps - \rho^*||_{\Hm{m'}} \to 0.
\end{align*}
\end{proof}

\begin{proposition}[Uniform Convergence of $\rheps_t$]
\label{prop:uniformC}
Let $m>  d^* := 3+d$ and suppose that $\rheps$ is the subsequence of approximators converging to $\rho^*$ as in Lemma \ref{lem:strongConverge}. Then the sequence $\rheps_t$ converges uniformly in $\mathcal{C}^0([0,T_m]; \mathcal{C}^0(\dom)).$
\end{proposition}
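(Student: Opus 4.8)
The plan is to show that $\rheps_t$, i.e.\ the operator $\Aopaut[\rheps]$ on the right-hand side of \eqref{eq:moll_pde}, converges uniformly to
$$G^* := \delta \lap\big(u^+ \rho^* - \kappa M(\rho^*)\big) + \eta - \omega \rho^* + I[\gamma \rho^* u^*] - \gamma \rho^* u^*,$$
where $u^* := u(\kappa, \rho^*)$. Since the norm on $\mathcal{C}^0([0,T_m];\mathcal{C}^0(\dom))$ is the supremum over both $t \in [0,T_m]$ and $x \in \dom$, the task is to establish convergence of each term that is uniform in time as well as space. I would split $\Aopaut[\rheps]$ into the second-order diffusion term $\delta \Jeps[\lap(u^+\Jeps[\rheps] - \kappa M(\Jeps[\rheps]))]$ and the collection of lower-order terms $\eta - \omega\rheps + I[\gamma\rheps\ueps] - \gamma\rheps\ueps$, treating the latter first.

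For the lower-order terms, the starting observation is that strong convergence $\rheps \to \rho^*$ in $\mathcal{C}^0([0,T_m];\Hm{m'})$ (Lemma \ref{lem:strongConverge}) combined with the Sobolev embedding $\Hm{m'} \hookrightarrow \mathcal{C}^0(\dom)$ for $m' > d/2$ (Lemma \ref{lem:sobin}) gives $\sup_{t}\|\rheps - \rho^*\|_{\Lp{\infty}} \to 0$, i.e.\ uniform convergence in both variables. Together with the uniform positive lower bound $\rheps \ge 3\ul{\rho}_0/4$ on $[0,T_m]$ from Proposition \ref{prop:red}, the smoothness and boundedness of $\kappa,\omega,\gamma,\tau$, and the fact that $u(\kappa,\cdot)$ is Lipschitz on $[3\ul{\rho}_0/4,\infty)$, each of $\omega\rheps$, $\ueps$, and $\gamma\rheps\ueps$ converges uniformly to its limit. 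For the nonlocal term I would use that $\tau$ is bounded to obtain $|I[\gamma\rheps\ueps] - I[\gamma\rho^* u^*]|(t,x) \lesssim \int_\dom |\gamma\rheps\ueps - \gamma\rho^* u^*|(t,y)\,\dd y$, which is controlled uniformly in $t,x$ because the integrand converges uniformly and $\dom$ has finite measure.

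The main obstacle is the diffusion term, since it involves two derivatives while $\rheps$ is only known to converge strongly in $\Hm{m'}$ with $m' < m$. I would set $g^{(\eps)} := \lap(u^+\Jeps[\rheps] - \kappa M(\Jeps[\rheps]))$ and $g^* := \lap(u^+\rho^* - \kappa M(\rho^*))$ and decompose $\Jeps[g^{(\eps)}] - g^* = \Jeps[g^{(\eps)} - g^*] + (\Jeps[g^*] - g^*)$. The second summand is handled uniformly in $t$ using Lemma \ref{lem:moll}: by item~4, $\sup_t\|\Jeps[\rho^*] - \rho^*\|_{\Hm{m-1}} \le \eps \sup_t\|\rho^*\|_{\Hm{m}}$, which is finite and tends to $0$ since $\rho^* \in \mathcal{C}^0([0,T_m];\Hm{m})$ (Proposition \ref{prop:HmHm}); as $m-1 > 2+d/2$, the embedding $\Hm{m-1}\hookrightarrow \mathcal{C}^2(\dom)$ converts this into $\sup_t\|\Jeps[g^*] - g^*\|_{\Lp{\infty}} \to 0$. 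For the first summand, $L^\infty$-contractivity of $\Jeps$ (Lemma \ref{lem:moll}, item~1) bounds $\|\Jeps[g^{(\eps)} - g^*]\|_{\Lp{\infty}}$ by $\|g^{(\eps)} - g^*\|_{\Lp{\infty}}$, and I would expand both Laplacians via \eqref{eq:lap_rhou} into products of $\kappa$-derivatives, factors $M^{(j)}$ evaluated at $\Jeps[\rheps]$ or $\rho^*$, and derivatives $\partial^\alpha\Jeps[\rheps]$ or $\partial^\alpha\rho^*$ with $|\alpha|\le 2$. The crucial ingredient is a uniform-in-$t$ upgrade of Lemma \ref{lem:limder}: writing $\Jeps[\partial^\alpha\rheps] - \partial^\alpha\rho^* = \Jeps[\partial^\alpha(\rheps-\rho^*)] + (\Jeps[\partial^\alpha\rho^*] - \partial^\alpha\rho^*)$, the first piece is bounded (for $2+d/2 < m' < m$, via item~1 and Sobolev embedding) by $\|\rheps - \rho^*\|_{\Hm{m'}} \le \sup_t\|\rheps-\rho^*\|_{\Hm{m'}} \to 0$, while the second piece is precisely the uniform-in-$t$ estimate just established. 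Since $\rho^*$ and $\Jeps[\rheps]$ remain in the compact range $[3\ul{\rho}_0/4,\infty)\cap\{\rho\le C\}$ where $M$ and its derivatives are controlled (Lemma \ref{lem:lipbound}), each difference of products converges uniformly in $t$ and $x$, yielding $\sup_t\|g^{(\eps)} - g^*\|_{\Lp{\infty}}\to 0$.

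Assembling the pieces, every term of $\Aopaut[\rheps]$ converges uniformly to the corresponding term of $G^*$ in the $\mathcal{C}^0([0,T_m];\mathcal{C}^0(\dom))$ norm, so $\rheps_t \to G^*$ uniformly. The step I expect to be most delicate is converting the a.e.-in-time convergence of the mollified second derivatives in Lemma \ref{lem:limder} into convergence that is uniform in time; this hinges on the $\Hm{m}$-valued continuity of $\rho^*$ from Proposition \ref{prop:HmHm} and on the uniform-in-$t$ strong $\Hm{m'}$-convergence of $\rheps$, rather than on any single pointwise-in-$t$ bound.
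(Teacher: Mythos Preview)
Your approach is essentially the paper's: both arguments separate the lower-order and nonlocal terms (handled via the strong $\mathcal{C}^0([0,T_m];\Hm{m'})$ convergence and Sobolev embedding) from the diffusion term, and for the latter both ultimately reduce to showing $\Jeps[\partial^\alpha\rheps]\to\partial^\alpha\rho^*$ in $L^\infty$ via the decomposition into $\Jeps[\partial^\alpha(\rheps-\rho^*)]$ plus $\Jeps[\partial^\alpha\rho^*]-\partial^\alpha\rho^*$, with the first piece controlled by $\|\rheps-\rho^*\|_{\Hm{m'}}$ and the second by item~4 of Lemma~\ref{lem:moll}. Your organization (peeling off the outer mollifier via $\Jeps[g^{(\eps)}]-g^*=\Jeps[g^{(\eps)}-g^*]+(\Jeps[g^*]-g^*)$ before expanding) is slightly cleaner than the paper's, and your explicit attention to uniformity in $t$ is an improvement over the paper's somewhat implicit handling of that point.

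There is one genuine non-sequitur, however. You write that $\sup_t\|\Jeps[\rho^*]-\rho^*\|_{\Hm{m-1}}\le\eps\sup_t\|\rho^*\|_{\Hm{m}}$ and that the embedding $\Hm{m-1}\hookrightarrow\mathcal{C}^2(\dom)$ ``converts this into $\sup_t\|\Jeps[g^*]-g^*\|_{\Lp{\infty}}\to 0$.'' That inference fails: $\Jeps$ does not commute with the nonlinear map $\rho\mapsto\lap(u^+\rho-\kappa M(\rho))$, so convergence of $\Jeps[\rho^*]$ to $\rho^*$ in $\mathcal{C}^2$ says nothing directly about $\Jeps[g^*]-g^*$. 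The fix is to apply item~4 to $g^*$ itself: since $\rho^*\in\mathcal{C}^0([0,T_m];\Hm{m})$ and $m>d/2$, Lemma~\ref{lem:compm} gives $u^+\rho^*-\kappa M(\rho^*)\in\Hm{m}$ with norm bounded uniformly in $t$, hence $g^*\in\Hm{m-2}$ uniformly, so $\|\Jeps[g^*]-g^*\|_{\Hm{m-3}}\le\eps\|g^*\|_{\Hm{m-2}}$, and $m-3>d/2$ (from $m>d+3$) yields the desired $L^\infty$ bound via Sobolev embedding. With this correction the rest of your argument goes through.
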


\begin{proof}
We consider groups of terms for $\rheps_t$: the terms involving the Laplacian, the nonlocal operator, and all remaining terms. Denote $u^* = u(\kappa, \rho^*).$ We wish to show that all of the spatial terms and derivatives converge uniformly. We begin with the derivative terms (see Eq. \eqref{eq:lap_rhou}):
\begin{align*}
&\delta \lim_{\eps \downarrow 0} \Big( \Jeps [ \lap( u^+ \Jeps \rho^{(\epsilon)} - \kappa M(\Jeps \rho^{(\epsilon)}) ) ] \Big) \\
&= \delta \lim_{\eps \downarrow 0} \Big( u^+ \Jeps [ \lap \Jeps [\rho^{(\epsilon)}]] - \Jeps[ M(\Jeps \rho^{(\epsilon)}) \lap \kappa + 2 M'(\Jeps \rho^{(\epsilon)}) \nabla \kappa \cdot \nabla \Jeps \rho^{(\epsilon)} \\
& + \kappa M'(\Jeps \rho^{(\epsilon)}) \lap \Jeps \rho^{(\epsilon)} + \kappa M''(\Jeps \rho^{(\epsilon)}) | \nabla \Jeps \rho^{(\epsilon)} |^2 ] \Big) \\
&= \delta \lim_{\eps \downarrow 0} \Big( u^+ \Jeps \lap \rho^{(\epsilon)} - M(\Jeps \rho^{(\epsilon)}) \lap \kappa - 2 M'(\Jeps) \nabla \kappa \cdot \nabla \Jeps \rho^{(\epsilon)} \\
& - \kappa M'(\Jeps \rho^{(\epsilon)}) \lap \Jeps \rho^{(\epsilon)} - \kappa M''(\Jeps \rho^{(\epsilon)}) | \nabla \Jeps \rho^{(\epsilon)} |^2  \Big) \\
&= \delta \lim_{\eps \downarrow 0} \Big( u^+ \lap \rho^{*} - M(\Jeps \rho^{(\epsilon)}) \lap \kappa - 2 M'(\Jeps \rho^{\epsilon}) \nabla \kappa \cdot \nabla \rho^{*} \\
& - \kappa M'(\Jeps \rho^{(\epsilon)}) \lap \rho^{*} - \kappa M''(\Jeps \rho^{(\epsilon)}) | \Jeps \nabla \rho^{(\epsilon)} |^2  \Big) \\
&= \delta \Big( u^+ \lap \rho^* - M(\rho^*) \lap \kappa - 2 M'(\rho^*) \nabla \kappa \cdot \nabla \rho^* - \kappa M'(\rho^*) \lap \rho^* - \kappa M''(\rho^*) |\nabla \rho^*|^2 \Big) \\
&= \delta \lap(\rho^* u^*).
\end{align*}

Above, the second equality came from $\Jeps [f]$ converging uniformly to $f$ when $f \in \mathcal{C}^{\ell}(\dom)$ and $\ell>d/2$. Each expression is in fact in $\mathcal{C}^\infty(\dom)$ due to the smooth functions and further mollifications. 
The third inequality comes from Lemma \ref{lem:limder} and from selecting $2+m' <m$ and noting that 
\begin{align*}
||\lap \Jeps[\rheps] - \lap \rho^*||_{\Lp{\infty}} &\lesssim ||\lap \Jeps[\rheps] - \lap \rho^*||_{\Hm{m'}} \\
&\leq ||\Jeps[\rheps] - \rho^*||_{\Hm{m'+2}} \\
&\leq ||\Jeps[\rheps] - \Jeps[\rho^*]||_{\Hm{m'+2}} + ||\Jeps[\rho^*] - \rho^*||_{\Hm{m'+2}}.
\end{align*}
Both terms go to zero: the first from $\rheps \to \rho^*$ in $\Hm{m'+2}$ (since $d/2 < m'+2 < m$) and the second from $\rho^* \in \Hm{m}$ (item 4 of Lemma \ref{lem:moll}). Similar logic applies to the $\nabla \Jeps[\rheps] \to \nabla \rho^*.$
The fourth equality stems from $\Jeps [\partial^\alpha \rho^{(\epsilon)}]$ converging uniformly to $\partial^\alpha \rho^*$ for $0 \leq |\alpha| \leq 2$ by Lemma \ref{lem:limder} and making use of the Lipschitz properties of $M$, $M'$ and $M''.$

The local terms work out trivially since, for each $t$, $\rho^{(\epsilon)}$ converges uniformly to $\rho^*$:
\begin{align*}
\lim_{\eps \downarrow 0} \Big( \eta - \omega \rho^{(\epsilon)} - \gamma ( u^+ \rho^{(\epsilon)} - \kappa M( \rho^{(\epsilon)} ) \Big) &= \eta - \omega \rho^* - \gamma( u^+ \rho^* - \kappa M(\rho^*) ).
\end{align*}

Finally, the nonlocal operator can be handled by the Dominated Convergence Theorem \cite{rudin1987real}. We have that $|\tau(y,x) \gamma(y) \rho^{(\epsilon)}(t,y) u(\kappa,\rheps) | \lesssim ||\rho^{(\epsilon)}||_\infty$ which is finite on $[0, T_m]$ due to Proposition \ref{prop:red} and Sobolev Embedding. 
 Thus, for each $x \in \dom$, the integrand of the nonlocal operator is bounded by an $\Lp{1}$ constant function and from the convergence of $\rho^{(\epsilon)}$, we conclude
\begin{equation*}
\lim_{\eps \downarrow 0} \int_{\Td}  \tau(y,x) \gamma(y) \rho^{(\epsilon)}(t,y) u(\kappa(y),\rheps(t,y)) \dd y = \int_{\Td} \tau(y,x) \gamma(y) \rho^*(t,y) u(\kappa(y) \rho^*(t,y)) \dd y.
\end{equation*}

To see that the convergence is uniform, let $$G^{(\eps)}(x) = \int_\dom |\tau(y,x) \gamma(y) ( \rho^{(\epsilon)}(t,y) u(\kappa(y),\rheps(t,y)) - \rho^{*}(t,y) u^{*}(\kappa(y),\rho^*(t,y)) )| \dd y.$$ Then with $|\tau|, |\gamma| \leq \Xi_0$, we have $$|G^{(\eps)}(x)| \leq \Xi_0^2 \int_\dom | \rho^{(\epsilon)}(t,y) u(\kappa(y),\rheps(t,y)) - \rho^{*}(t,y) u(\kappa(y), \rho^*(t,y))  | \dd y \to 0 \quad \text{as } \eps \downarrow 0$$ since $\rheps \to \rho^*$ uniformly and $|\dom| = 1.$

\end{proof}

\begin{proof}[Proof of Theorem \ref{thm:local_exist}]
We have $m > d^* := 3+d$ so that $\Hm{m} \subset \mathcal{C}^2(\dom)$ and $\Hm{m-2} \subset \mathcal{C}^0(\dom)$ from Lemma \ref{lem:sobin}. From Proposition \ref{prop:HmHm}, we have the existence of a sequence $\rho^{(\epsilon)} \rightarrow \rho^* \in \mathcal{C}^0([0, T_m]; \Hm{m})$. 

We first verify that $\rho^*$ is a classical solution. Since $\rho^* \in \mathcal{C}^0([0, T_m]; \Hm{m})$, it is also in $\mathcal{C}^0([0, T_m]; \mathcal{C}^2(\dom)).$ From Proposition \ref{prop:uniformC}, the sequence $\rheps_t$ converges uniformly over $\mathcal{C}^0([0,T_m]; \mathcal{C}^0(\dom)).$ Additionally, $\lim_{\eps \downarrow 0} \rheps(0) = \rho_0$ (there is pointwise convergence at $t=0$). Therefore $\lim_{\eps \downarrow 0} \rheps_t = \partial_t \lim_{\eps \downarrow 0} \rheps = \rho^*_t$ and we conclude $\rho^*(t,x) : [0,T_m] \times \dom \to \nnR$ satisfies $$\rho^*_t = \delta \lap (\rho^* u^*) + \eta - \omega \rho^* + I[\gamma \rho^* u^*] - \gamma \rho^* u^*.$$ Since $\rho^*_t \in \mathcal{C}^0(\dom)$, we also have $\rho^* \in \mathcal{C}^1([0,T_m]; \mathcal{C}^0(\dom)).$

Upon examination, $\rho^*_t \in \Hm{m-2}$ since $\rho^* \in \Hm{m}$ with $D^2 \rho^* \in \Hm{m-2}$ on $[0,T_m]$, all the data are smooth, and $m-2>d/2$. Thus $\rho^* \in \mathcal{C}^0([0,T_m]; \Hm{m}) \cap \mathcal{C}^1([0,T_m]; \Hm{m-2}).$

\end{proof}

\begin{remark}
From Lemma \ref{lem:strongConverge} we could have already established a classical solution since $m>m'>d^*$ is sufficient for the uniform convergence of $\rheps_t.$
\end{remark}

\section{Conclusions and Future Work}

\label{sec:conclusion}

We have proven the local existence of classical solutions to a particular nonlocal, nonlinear parabolic PDE with time-independent data. We did so through parameterizing a family of regularized solutions with smoothing parameter $\eps$ and establishing an interval of local existence for the family independent of $\eps$. By passing to a subsequence, we obtained strong convergence to a solution of the PDE. This methodology is quite broadly applicable to nonlinear PDEs, but from these steps, we did not obtain global existence. However, given the smoothing properties of parabolic PDEs, the nondegeneracy condition in $u$, and the fact the limiting behaviours of the diffusion term are well-behaved as $\rho \downarrow 0$ and $\rho \uparrow \infty$, we speculate that a stronger result holds: that there exists a unique solution, smooth in all its arguments, that exists globally in time.

The original model allowed for time-dependent data. However, Picard's Theorem does not generalize well to nonautonomous ODEs in Banach spaces. While we can establish local existence of regularized solutions, the continuation result does not hold. This prevented our obtaining a limiting solution as $\eps \downarrow 0$ with time-dependent data. It would be of interest to find other means of establishing existence of solutions with time-dependent data. One perspective is that for many applications, piecewise constant data may be useful. Thus, assuming the solutions do exist long enough, piecewise constant data can be combined sequentially.

Other useful future directions include studying the PDE with initial conditions in a fractional Sobolev space (primarily of mathematical interest). Our proofs in bounding the energy of Step 2, for example, were done for integer values of $m$. As an additional consideration, the model could be studied with fractional diffusion. Buildings and roads in a city effectively make for a porous medium and we take note that fractional diffusion equations \cite{erochenkova2001fractional} can be used in such settings to more accurately model dispersion.

\section*{Acknowledgments}
The author thanks Andrea Bertozzi for general guidance in the analysis of nonlinear PDEs, and for her illuminating discussions and elaborations on the more technical analytic tools needed in this work (especially in Step 4). The author also thanks Terence Tao and Inwon Kim for taking the time to discuss this work and providing valuable suggestions and the reviewers for their detailed constructive feedback in revising this manuscript.

The author acknowledges funding from the Natural Sciences and Engineering Research Council of Canada (NSERC) [PDF-502-479-2017]. Cette recherche a \'et\'e financ\'ee par le Conseil de recherches en sciences naturelles et en g\'enie du Canada (CRSNG) [PDF-502-479-2017].

\appendix

\section{Background and Further Proofs}

\label{app:nom}

\subsection{Background}

\paragraph{Derivatives:} We will use both $D^m$ and $\partial^\alpha$ to denote spatial derivatives in $x$. For an integer $m \geq 0$, we denote $$D^m f$$ to be {\it all $m^{\text{th}}$-order $x$-derivatives of $f$}. So writing $|D^2 f| \leq C$ we mean that all second-order spatial derivatives of $f$ are bounded by $C$. For a multi-index $\alpha$, we denote $$\partial^\alpha f$$ to be the {\it single $|\alpha|^{\text{th}}$-order $x$-derivative of $f$ specified by $\alpha$.}

\paragraph{Hilbert Space $H^m(\Omega)$:} for a domain $\Omega$, $m \in \mathbb{N} \cup \{0\}$, and $f,g : \Omega \to \mathbb{R}$, we define an inner product $$( f, g )_{H^m(\Omega)} := \sum_{0 \leq |\alpha| \leq m} \int_\Omega \partial^\alpha f \partial^\alpha g \dd x$$ with induced norm $$||f||_{H^m(\Omega)} = ( f, f )_{H^m(\Omega)}^{1/2}.$$ Then we define $$H^m(\Omega) = \{ f : \Omega \to \mathbb{R} | \quad ||f||_{H^m(\Omega)} < \infty \}.$$

If $\Omega = \Td$ then an equivalent norm upon $\Hm{m}$ for real-valued $m$ can be defined via
$$||f||_{H^m(\Omega)} = \sum_{k \in \mathbb{Z}^d} (1 + |k|^2)^{m/2} |\hat f(k)|^2.$$

\paragraph{Banach Space $L^p(\Omega)$:} for a domain $\Omega$, $1 \leq p \leq \infty$, and $f: \Omega \to \mathbb{R}$, we define the norm 
$$||f||_{L^p(\Omega)} = \begin{cases} \left( \int_{\Omega} |f|^p \dd x \right)^{1/p}, \quad 1 \leq p < \infty \\
\text{ess sup}_\Omega |f|, \quad p = \infty. \end{cases}$$ Then we define $$L^p(\Omega) = \{ f : \Omega \to \mathbb{R} | f \text{ is measurable and } ||f||_{L^p(\Omega)} < \infty \}.$$ Naturally $H^0(\Omega) = L^2(\Omega).$ 

\paragraph{H\"{o}lder Space $\mathcal{C}^{n,\alpha}(\Omega)$:} for a domain $\Omega$, $n \in \mathbb{N} \cup \{0\}$ and $0 < \alpha \leq 1$ we define the norm $$||f||_{\mathcal{C}^{n,\alpha}(\Omega)} = \sup_{|\beta| \leq n} ||\partial^\beta f||_{L^\infty(\Omega)} + \sup_{\substack{x,y \in \Omega, x \neq y \\ |\beta|=n}} \frac{|f^{(\beta)}(x) - f^{(\beta)}(y)|}{|x-y|^{\alpha}}.$$ Then $$\mathcal{C}^{n,\alpha}(\Omega) = \{ f | \quad ||f||_{\mathcal{C}^{n,\alpha}(\Omega)} < \infty \}.$$

As a convention, we will sometimes use $\mathcal{C}^\ell(\Omega)$ to indicate a H\"older space. For $\ell \in \{0,1,...\}$ the notation represents the number of continuous derivatives. But for fractional $\ell$, let $\ell = \lfloor \ell \rfloor + \{ \ell \}$ decompose $\ell$ into a whole number part and a fractional part. We then define $\mathcal{C}^\ell(\Omega) := \mathcal{C}^{\lfloor \ell \rfloor, \{ \ell \}}(\Omega)$.

\paragraph{Banach Space $L^p( I; X )$:} for an interval $I \subset \mathbb{R}$, let $X$ be a Banach space and $1 \leq p \leq \infty$. For $f : I \to X$, we define the norm $$||f||_{L^p( I; X)} = \left| \left| \Big( || f(t,\cdot) ||_{X} \Big) \right| \right|_{L^p(I)}.$$ Then we define $$L^p(I; X) = \{ f : I \to X | \text{f is Bochner integrable and } ||f||_{L^p(I ; X)} < \infty \}.$$ A reader may refer to a book such as that of Ladas and Lakshmikantham \cite{abstract_de} for a definition of the Bochner integral.

\begin{remark}
If $p=2$ and $X=H$ is a Hilbert space then $L^2(I; H)$ is also a Hilbert space with inner product $(f,g)_{L^2(I; H)} = \int_I (f,g)_H \dd t.$
\end{remark}

\paragraph{Banach Space $\mathcal{C}^n( I; X )$:} for an interval $I \subset \mathbb{R}$, let $X$ be a Banach space and $n \geq 0$ be an integer. We define $$\mathcal{C}^n( I; X ) = \{ f: I \to X | f \text{ is continuously differentiable up to order } n \}.$$ 

For a function $f \in \mathcal{C}^n(I; X)$, we denote $\dot f = f'(t)$ as the derivative of the map $f: I \to X.$

\paragraph{Space of Continuous Functions in the Weak Topology of $H^m(\Omega)$, \\ $\mathcal{C}_W(I; H^m(\Omega))$:} for an interval $I$, we define $$\mathcal{C}_W(I; H^m(\Omega)) = \{ f : I \to \Hm{m} | \forall \phi \in \Hm{-m}, \int_\Omega f(\cdot) \phi \dd x \in \mathcal{C}^0(I) \}.$$ We can equivalently define $$\mathcal{C}_W(I; H^m(\Omega)) = \{ f : I \to \Hm{m} | \forall \Phi \in \Hm{m}, (\Phi, f(\cdot))_{\Hm{m}} \in \mathcal{C}^0(I) \}.$$

\paragraph{Inequalities:} We will write $$f \lesssim g$$ if $f, g \geq 0$ and $f \leq C g$ for some constant $C>0$. In general, the constant $C$ could depend upon model parameters. When this dependence is important, we will write $$f \lesssim_{a,b,c} g$$ to indicate, for instance, if the constant $C$ depends on $a, b,$ $c$, and possibly other parameters that are not relevant. 

In a like fashion, we write $f \sim g$ if $f, g \geq 0$ and $C_1 f \leq g \leq C_2 f$ for some $C_1, C_2 > 0.$

\paragraph{Chain Rule:}

\begin{lemma}[Fa\`a di Bruno's Formula \cite{hardy2006combinatorics}] 	Let $f:\mathbb{R} \to \mathbb{R}$ and $g:\mathbb{R}^n \to \mathbb{R}.$ Let $\alpha$ be a multi-index with order at least 1. Then	
	$$\partial^\alpha f(g(x)) = \sum_{\beta \in \mathcal{P}(\alpha)} f^{(|\beta|)} (g(x)) \prod_{\sigma \in \beta} \partial^\sigma g(x)$$	
	where $\mathcal{P}(\alpha)$ represents all partitions of $\alpha$ allowing for multiplicity.
\end{lemma}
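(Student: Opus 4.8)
The plan is to prove the formula by induction on the order $|\alpha|$. The first thing I would do is pin down the reading of $\mathcal{P}(\alpha)$ that makes the coefficient-free sum correct: I interpret $\mathcal{P}(\alpha)$ as the set of partitions of the $|\alpha|$ ``derivative slots'' attached to $\alpha$ — a collection of $|\alpha|$ labelled positions, of which $\alpha_i$ carry the direction label $i$ — where each block $\sigma$ of a partition $\beta$ is identified with the multi-index obtained by summing the direction labels of its positions, and $|\beta|$ is the number of blocks. Because the positions are distinguishable, distinct set partitions yielding the same block-multi-indices are counted separately, which is exactly what ``allowing for multiplicity'' should mean, and this is what lets the sum dispense with multinomial coefficients. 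A quick sanity check on $\alpha=(3)$ confirms the reading: the five set partitions of three like-labelled slots produce the classical weights $1,3,1$ on the terms $f'''(g)(g')^3$, $f''(g)g'g''$, $f'(g)g'''$.

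For the base case $|\alpha|=1$ we have $\alpha=e_i$, the only partition is the single block $\{e_i\}$, and the identity is just the ordinary chain rule $\partial^{e_i} f(g) = f'(g)\,\partial^{e_i} g$. For the inductive step I assume the formula for all multi-indices of order $k$, take $\alpha$ of order $k+1$, pick an index $i$ with $\alpha_i\geq 1$, and write $\alpha=\alpha'+e_i$ with $|\alpha'|=k$. Applying the inductive hypothesis to $\alpha'$ and then differentiating once more in direction $i$, I would expand $\partial^{e_i}$ across the sum with the Leibniz rule together with the chain rule on the factor $f^{(|\beta'|)}(g)$. Each summand $f^{(|\beta'|)}(g)\prod_{\sigma\in\beta'}\partial^\sigma g$ then splits into exactly two kinds of terms: differentiating $f^{(|\beta'|)}(g)$ gives $f^{(|\beta'|+1)}(g)\,\partial^{e_i}g\prod_{\sigma\in\beta'}\partial^\sigma g$, corresponding to adjoining the new slot as a fresh singleton block; and differentiating each factor $\partial^{\sigma_0}g$ gives $f^{(|\beta'|)}(g)\,\partial^{\sigma_0+e_i}g\prod_{\sigma\in\beta'\setminus\{\sigma_0\}}\partial^\sigma g$, corresponding to inserting the new slot into the existing block $\sigma_0$.

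The observation that closes the induction is the standard recursion for set partitions: every partition of the $(k+1)$-slot set arises uniquely from a partition of the $k$-slot set by either placing the new slot in its own block or adjoining it to exactly one existing block. This is a bijection between pairs (a partition $\beta'$ of $\alpha'$, a placement choice) and partitions $\beta$ of $\alpha$, under which the two families of terms above match the summands of the claimed formula for $\alpha$ one-to-one and with multiplicity one, so no coefficients appear. Summing over $\beta'\in\mathcal{P}(\alpha')$ therefore reproduces $\sum_{\beta\in\mathcal{P}(\alpha)} f^{(|\beta|)}(g)\prod_{\sigma\in\beta}\partial^\sigma g$. Throughout I would assume $f$ and $g$ are of class $\mathcal{C}^{|\alpha|}$ so that all derivatives exist and mixed partials commute, which makes the choice of peeled-off direction $i$ immaterial.

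The main obstacle I anticipate is combinatorial bookkeeping rather than analysis: one must verify carefully that the bijection between enlarged and reduced slot partitions matches the Leibniz-generated terms exactly and with unit multiplicity, and that the ``allowing for multiplicity'' convention is handled consistently so that the coefficient-free sum is genuinely the right statement. Once the slot-partition interpretation is fixed, the analytic content reduces to the product and chain rules, and the whole argument is a clean induction.
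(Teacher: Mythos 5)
Your proof is correct, but note that the paper itself does not prove this lemma at all: it is quoted with a citation to Hardy \cite{hardy2006combinatorics}, and the text following it is only a worked example (the case $\alpha=(2,1)$) meant to fix the meaning of $\mathcal{P}(\alpha)$. So your induction supplies a self-contained argument where the paper defers to the literature. Your ``labelled slots'' reading of $\mathcal{P}(\alpha)$ is exactly the convention the paper's example pins down: there the multiset $\{1_1,1_2,2\}$ is split into five set partitions, and the two partitions $\{1_1\},\{1_2,2\}$ and $\{1_2\},\{1_1,2\}$ --- which yield the same block multi-indices $\{(1,0),(1,1)\}$ --- are listed as \emph{distinct} elements of $\mathcal{P}(\alpha)$, which is precisely your ``allowing for multiplicity.'' Your inductive mechanism (the new slot either forms a singleton block, coming from the chain rule applied to $f^{(|\beta'|)}(g)$, or joins exactly one existing block $\sigma_0$, coming from the Leibniz rule applied to the factor $\partial^{\sigma_0}g$, with the standard recursion for set partitions of a $(k+1)$-element set providing the bijection) is the classical combinatorial proof and is essentially the argument of the cited source. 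The one point worth making explicit in a final write-up is that $\prod_{\sigma\in\beta}\partial^\sigma g$ must be read as a product over the blocks of the set partition, so that two blocks of $\beta'$ carrying equal multi-indices contribute a repeated factor whose two Leibniz derivatives correspond to the two distinct insertions; you flag exactly this bookkeeping issue, and your handling of it is consistent, so the coefficient-free sum closes with unit multiplicity as claimed. The smoothness assumption $f,g\in\mathcal{C}^{|\alpha|}$ you impose (so mixed partials commute and the peeled-off direction $i$ is immaterial) is the right hypothesis and is implicitly satisfied everywhere the paper invokes the formula, since it is only ever applied to $\mathcal{C}^\infty$ compositions such as $M(\Jeps[\rheps])$.
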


Notationally there are many ways of writing Fa\`a di Bruno's Formula and we have adopted one that is useful for our computations. To clarify any ambiguities in the statement of the equation, we consider a concrete example below. 

Let $f: \mathbb{R} \to \mathbb{R},$ $g: \mathbb{R}^2 \to \mathbb{R}$, and consider computing $\frac{\partial^3}{\partial^2 x_1 \partial x_2} f(g(x)).$ We have $\alpha = (2,1)$ which we identify with the multiset $\{1_1,1_2,2\}$ that tracks the derivatives of each component (the subscipts are added for clarity). This multiset of 3 elements can (allowing for multiplicity) be split into 5 partitions: \begin{align*}
&\{1_1,1_2,2\} \qquad \text{i.e. } \partial_{x_1 x_1 x_2} g \\
&\{1_1,1_2\}, \{2\} \qquad \text{i.e. } \partial_{x_1 x_1} g \times \partial_{x_2} g \\
&\{1_1\}, \{1_2,2\} \qquad \text{i.e. } \partial_{x_1} g \times \partial_{x_1 x_2} g  \\
&\{1_2\}, \{1_1,2\} \qquad \text{i.e. } \partial_{x_1} g \times \partial_{x_1 x_2} g  \\
&\{1_1\}, \{1_2\}, \{2\} \qquad \text{i.e. } \partial_{x_1} g \times \partial_{x_1} g \times \partial_{x_2} g 
\end{align*}

Thus, the set \begin{align*} \mathcal{P}(\alpha) &= \{ \{ (2,1) \}, \\
& \{ (2,0), (0,1) \}, \\
& \{ (1,0), (1,1) \}, \\
& \{ (1,0), (1,1) \}, \\
& \{ (1,0), (1,0), (0,1) \}  \}. \end{align*} The first element of $\mathcal{P}(\alpha)$ corresponds to $|\beta|=1$, the next three elements correspond to $|\beta|=2$, and the last element corresponds to $|\beta|=3.$ Summing all of this up, keeping each term distinct:

\begin{align*} \frac{\partial^3}{\partial^2 x_1 \partial x_2} f(g(x)) &= f'(g(x)) \frac{\partial^3 g}{\partial^2 x_1 \partial x_2} + f''(g(x)) \frac{\partial^2 g}{\partial x_1^2} \frac{\partial g}{\partial x_2} + f''(g(x)) \frac{\partial g}{\partial x_1} \frac{\partial^2 g}{\partial x_1 \partial x_2} \\ &+ f''(g(x)) \frac{\partial g}{\partial x_1} \frac{\partial^2 g}{\partial x_1 \partial x_2} + f'''(g(x)) \frac{\partial g}{\partial x_1} \frac{\partial g}{\partial x_1} \frac{\partial g}{\partial x_2}. \end{align*}

\paragraph{Nonautonomous PDEs}

We require the version of Picard-Lindeloff in a Banach space below to establish local existence of the approximators to the nonautonomous system. See Theorem 5.1.1 of \cite{abstract_de}. 

\begin{lemma}[Picard-Lindeloff for Nonautonomous ODE in Banach Space]
	\label{thm:picard}	
	Let $(X,||\cdot||_X)$ be a Banach space with $v_0 \in X$ and define the rectangle $\mathcal{R}_0 = \{(t,v) \in \mathbb{R} \times X \mid |t-t_0| \leq \alpha, ||v-v_0||_X \leq \beta \}.$ Let $f: \mathcal{R}_0 \to X$ be continuous in $t$ for each fixed $v$. Assume that $||f(t,v)|| \leq M$ on $\mathcal{R}_0$ (uniformly bounded) and that for $(t,v_1), (t,v_2) \in \mathcal{R}_0$ we have $||f(t,v_1) - f(t,v_2)||_X \leq K ||v_1 - v_2||_X$ (uniformly Lipschitz in $t$) for nonnegative constants $M$ and $K$. Let $\alpha$ and $\beta$ be positive constants such that $\alpha M \leq \beta$. Then there exists a unique strongly continuously differentiable function $v(t)$ satisfying \begin{align*}
	v'(t) &= f(t,v(t)), \quad |t - t_0| \leq \alpha \\
	v(t_0) &= v_0.
	\end{align*}
\end{lemma}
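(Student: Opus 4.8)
The plan is to recast the initial value problem as a fixed-point equation for the Bochner integral and apply a contraction-mapping argument in a space of Banach-space-valued continuous functions. First I would observe that, for a continuous $v : I \to X$ on $I = [t_0 - \alpha, t_0 + \alpha]$, a strongly continuously differentiable solution of $v'(t) = f(t,v(t))$ with $v(t_0) = v_0$ is equivalent --- via the fundamental theorem of calculus for Bochner integrals --- to a continuous solution of
\begin{equation*}
v(t) = v_0 + \int_{t_0}^{t} f(s, v(s)) \, \dd s.
\end{equation*}
Before this is meaningful I would check that $s \mapsto f(s, v(s))$ is continuous, hence Bochner integrable on the compact interval $I$: for $s_n \to s$ one bounds $||f(s_n, v(s_n)) - f(s, v(s))||_X \leq K||v(s_n) - v(s)||_X + ||f(s_n, v(s)) - f(s, v(s))||_X$, where the first term vanishes by continuity of $v$ together with the Lipschitz hypothesis, and the second by continuity of $f$ in $t$ for fixed $v$.

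Next I would set up the complete metric space. Let $Y = \{ v \in \mathcal{C}^0(I; X) \mid ||v(t) - v_0||_X \leq \beta \ \text{ for all } t \in I \}$, a closed subset of the Banach space $\mathcal{C}^0(I; X)$ under the supremum norm, hence itself a complete metric space. Define the Picard operator $\Phi$ on $Y$ by $(\Phi v)(t) = v_0 + \int_{t_0}^t f(s, v(s)) \, \dd s$. The uniform bound $||f(t,v)||_X \leq M$ together with the hypothesis $\alpha M \leq \beta$ yields $||(\Phi v)(t) - v_0||_X \leq |t - t_0|\, M \leq \alpha M \leq \beta$, so $\Phi$ maps $Y$ into $Y$; continuity of $\Phi v$ is immediate from continuity of the integral.

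The crux is to show $\Phi$ has a unique fixed point. A direct estimate gives $||(\Phi v_1)(t) - (\Phi v_2)(t)||_X \leq K|t - t_0|\, ||v_1 - v_2||_{\mathcal{C}^0(I;X)}$, which is a contraction only when $K\alpha < 1$ --- yet the theorem claims existence on the full interval with no such smallness assumption. This is the main obstacle, and I would resolve it in one of two equivalent ways. Either I iterate the estimate to obtain $||(\Phi^n v_1)(t) - (\Phi^n v_2)(t)||_X \leq \frac{(K|t-t_0|)^n}{n!} ||v_1 - v_2||_{\mathcal{C}^0(I;X)} \leq \frac{(K\alpha)^n}{n!}||v_1 - v_2||_{\mathcal{C}^0(I;X)}$, so that $\Phi^n$ is a genuine contraction for $n$ large since $(K\alpha)^n/n! \to 0$, and invoke the standard fact that a self-map of a complete metric space some iterate of which is a contraction possesses a unique fixed point; or, equivalently, I re-norm $\mathcal{C}^0(I;X)$ with the Bielecki weight $||v||_\lambda = \sup_{t \in I} \e^{-\lambda |t - t_0|} ||v(t)||_X$ for a fixed $\lambda > K$, under which $\Phi$ is directly a contraction on all of $I$. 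Either route produces a unique fixed point $v \in Y$.

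Finally, the fixed point solves the integral equation, and since $s \mapsto f(s, v(s))$ is continuous, $v$ is strongly continuously differentiable with $v'(t) = f(t, v(t))$ and $v(t_0) = v_0$ throughout $|t - t_0| \leq \alpha$, as required. Uniqueness among solutions remaining in $Y$ is immediate from uniqueness of the fixed point; to upgrade to uniqueness among all $\mathcal{C}^1$ solutions I would apply a Gr\"onwall estimate to the difference of two solutions. The remaining care --- Bochner integrability of the integrand and the validity of the fundamental theorem of calculus in the Banach-space setting --- is routine but genuinely needed, whereas the essential difficulty is precisely the iterated-contraction (or weighted-norm) device that bypasses the missing $K\alpha < 1$ hypothesis.
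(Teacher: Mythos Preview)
Your argument is correct and is precisely the standard Picard--Banach contraction route; the iterated-contraction (or equivalently the Bielecki-weighted norm) device to avoid the smallness assumption $K\alpha<1$ is exactly what is needed, and your check that $s\mapsto f(s,v(s))$ is continuous (hence Bochner integrable) from the separate hypotheses on $f$ is the right preliminary step.

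The paper, however, does not give its own proof of this lemma: it simply cites Theorem~5.1.1 of Ladas and Lakshmikantham, \emph{Differential Equations in Abstract Spaces}. So there is nothing to compare against beyond noting that your proposal is the classical argument that one would find in such a reference. One small remark: your final Gr\"onwall step for uniqueness among all $\mathcal{C}^1$ solutions is fine but not strictly necessary, since any $\mathcal{C}^1$ solution with $v(t_0)=v_0$ automatically satisfies $\|v(t)-v_0\|_X\le M|t-t_0|\le\beta$ (by the bound $\|v'(t)\|_X\le M$ while $v$ remains in the $\beta$-ball, and a continuity argument to show it cannot leave), so every solution already lies in $Y$ and uniqueness of the fixed point suffices.
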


For a nonautonomous ODE, it is very difficult to say more, such as extending solutions \cite{abstract_de}. 

\subsection{Ancillary Results and Proofs}

Minor results used to establish the main results of this manuscript are included here. We begin with Corollaries to Lemma \ref{lem:sobin}.

\begin{corollary}
	\label{cor:hjm}
	Let $\Omega \subset \mathbb{R}^d$. Let $m > d/2$ be an integer and $v \in H^m(\Omega)$. Then for all $j \leq m$, $$||D^j v||_{L^{2m/j}(\Omega)} \lesssim ||v||_{H^m(\Omega)} < \infty.$$
\end{corollary}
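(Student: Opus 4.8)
The plan is to read off the estimate directly from the Gagliardo--Nirenberg interpolation inequality (item 2 of Lemma \ref{lem:sobin}), specialized to the $L^2$ endpoints $q=r=2$ and to the target exponent $p = 2m/j$, and then to absorb both interpolation factors into the full $H^m(\Omega)$-norm. Throughout I take $1 \le j \le m$; the degenerate reading $j=0$ (for which $2m/j=\infty$) is simply the Sobolev/H\"older embedding $H^m(\Omega) \subset \mathcal{C}^0(\Omega) \subset L^\infty(\Omega)$, which is available since $m > d/2$ (item 5 of Lemma \ref{lem:sobin}).

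First I would pin down the interpolation parameter $\alpha$. Setting $q=r=2$ in the compatibility relation of the Gagliardo--Nirenberg inequality and imposing $1/p = j/(2m)$, the relation
$$\frac{j}{2m} = \frac{j}{d} + \Bigl(\frac{1}{2} - \frac{m}{d}\Bigr)\alpha + \frac{1-\alpha}{2}$$
simplifies (the $\frac{\alpha}{2}$ terms cancel) to $\frac{j-m}{2m} = \frac{j - m\alpha}{d}$, which solves to
$$\alpha = \frac{j}{m} + \frac{d}{2m}\Bigl(1 - \frac{j}{m}\Bigr).$$
This is the heart of the matter, and it is exactly where the hypothesis $m > d/2$ is used: I must check the admissibility window $\frac{j}{m} \le \alpha \le 1$. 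The lower bound is immediate because the added term $\frac{d}{2m}(1-\frac{j}{m})$ is nonnegative for $j \le m$. The upper bound follows since $m > d/2$ forces $\frac{d}{2m} < 1$, whence $\alpha \le \frac{j}{m} + (1-\frac{j}{m}) = 1$, with equality precisely when $j=m$. I would also note that $p = 2m/j \in [2,2m]$ lies in the admissible range $[1,\infty]$, so the hypotheses of the inequality are met.

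With this choice of $\alpha$, Gagliardo--Nirenberg yields
$$\|D^j v\|_{L^{2m/j}(\Omega)} \le C\,\|D^m v\|_{L^2(\Omega)}^{\alpha}\,\|v\|_{L^2(\Omega)}^{1-\alpha},$$
and I would finish by bounding each factor on the right by the full norm, using $\|D^m v\|_{L^2(\Omega)} \le \|v\|_{H^m(\Omega)}$ and $\|v\|_{L^2(\Omega)} \le \|v\|_{H^m(\Omega)}$, so the product collapses to $C\,\|v\|_{H^m(\Omega)}^{\alpha + (1-\alpha)} = C\,\|v\|_{H^m(\Omega)}$, finite because $v \in H^m(\Omega)$. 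The only genuine work is the short algebraic verification of the admissibility window for $\alpha$; I do not anticipate any real obstacle beyond making sure $m > d/2$ is invoked exactly to keep $\alpha \le 1$. The endpoint $j=m$ is the trivial case $\alpha=1$, and $j=0$ is dispatched separately by the embedding noted above.
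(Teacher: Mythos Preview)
Your argument is correct and, like the paper, is a one-line application of the Gagliardo--Nirenberg inequality (Lemma~\ref{lem:sobin}, item~2); the only difference is the choice of parameters. The paper takes $q=\infty$, $r=2$, which forces the clean value $\alpha=j/m$ and gives $\|D^j v\|_{L^{2m/j}} \lesssim \|v\|_{L^\infty}^{1-j/m}\|D^m v\|_{L^2}^{j/m}$; the hypothesis $m>d/2$ then enters through the Sobolev embedding $\|v\|_{L^\infty}\lesssim\|v\|_{H^m}$ used to absorb the first factor. You instead take $q=r=2$, which yields the interpolation exponent $\alpha=\tfrac{j}{m}+\tfrac{d}{2m}(1-\tfrac{j}{m})$ and places both factors directly in $L^2$; here $m>d/2$ is what guarantees $\alpha\le 1$. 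Either route is equally short, and each invokes $m>d/2$ exactly once---the paper via an embedding after interpolating, you via admissibility before interpolating.
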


\begin{proof}
	Choose $q=\infty$, $r=2,$ and $\alpha = j/m$ in the Gagliardo-Nirenberg interpolation inequality of Lemma \ref{lem:sobin} yielding $p = 2m/j$ so that \begin{align*} ||D^j v||_{L^{2m/j}(\Omega)} &\lesssim ||v||_{L^{\infty}(\Omega)}^{1-j/m} ||D^m v||_{L^{2}(\Omega)}^{j/m} \\
	&\leq ||v||_{L^{\infty}(\Omega)}^{1-j/m} ||v||_{H^{m}(\Omega)}^{j/m} \\
	&\lesssim ||v||_{H^m(\Omega)}. \end{align*} 
	
	In arriving at the last line we used that $m>d/2$ so that $||v||_{L^{\infty}(\Omega)} \lesssim ||v||_{H^{m}(\Omega)}.$
\end{proof}

\begin{corollary}
	\label{cor:okprod}
	Let $m>d/2$ be an integer and $v \in \Hm{m}$. Suppose $S$ is a set of multi-indices such that $\sum_{s \in S} |s| \leq 2m$. Then $$\left| \int_{\dom} \prod_{s \in S}  \partial^s v \dd x \right| \lesssim ||v||_{\Hm{m}}^{|S|} < \infty.$$ Furthermore, if $\max_{s \in S} \{ |s| \} \leq m$ then $$\int_{\dom} | \prod_{s \in S} \partial^s v | \dd x \lesssim ||v||_{\Hm{m}}^{|S|}.$$
\end{corollary}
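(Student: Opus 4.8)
The plan is to prove the second (pointwise) estimate first, via a direct application of the generalized H\"older inequality together with Corollary \ref{cor:hjm}, and then to reduce the first estimate to it by integrating by parts on the torus so as to eliminate any derivative of order exceeding $m$. The pointwise estimate is the workhorse, and the integration-by-parts reduction is where the real (though elementary) work lies.

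For the second estimate, assume $\max_{s \in S} |s| \le m$. I would assign to each factor the exponent $p_s = 2m/|s|$ when $|s| \ge 1$ and $p_s = \infty$ when $|s|=0$, so that $\sum_{s \in S} 1/p_s = \big(\sum_{s \in S} |s|\big)/(2m) \le 1$. Writing $1/r = \sum_s 1/p_s$, this gives $r \ge 1$, and since $|\dom| = 1$ the containment $\Lp{r} \subset \Lp{1}$ yields $\int_\dom |\prod_s \partial^s v|\,\dd x \le \|\prod_s \partial^s v\|_{\Lp{r}}$. The generalized H\"older inequality (Lemma \ref{lem:sobin}) then bounds this by $\prod_{s \in S} \|\partial^s v\|_{\Lp{p_s}}$. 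Each factor with $1 \le |s| \le m$ is controlled by Corollary \ref{cor:hjm}, $\|\partial^s v\|_{\Lp{2m/|s|}} \lesssim \|v\|_{\Hm{m}}$, and each factor with $|s|=0$ by the Sobolev embedding $\Hm{m} \subset \Lp{\infty}$ (valid as $m>d/2$); multiplying the $|S|$ factors gives $\lesssim \|v\|_{\Hm{m}}^{|S|} < \infty$.

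For the first estimate I would drop the hypothesis $\max_s |s| \le m$ and retain only $\sum_s |s| \le 2m$. The key observation is that \emph{at most one} index in $S$ can have order strictly greater than $m$, since two such would already force $\sum_s |s| > 2m$. If $|S|=1$, the lone index $s$ satisfies $\int_\dom \partial^s v\,\dd x = 0$ whenever $|s|\ge 1$ by periodicity, and is trivially bounded when $|s|=0$. Otherwise $|S|\ge 2$; let $s^\ast$ be the unique index with $|s^\ast|>m$, so that every remaining index has order $< m$. Writing $s^\ast = s' + e_i$ and integrating by parts on $\dom$ (no boundary terms), one derivative comes off $\partial^{s^\ast}v$ and, by the Leibniz rule, lands on exactly one of the other factors, producing a finite sum of integrals of the same shape: each still has $|S|$ factors and total order $\le 2m$, the offending order has dropped to $|s^\ast|-1$, and every factor that received a derivative now has order at most $(m-1)+1 = m$. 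Iterating this at most $|s^\ast|-m$ times leaves a finite sum of integrals in which every factor has order $\le m$.

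Finally, I would bound $\big|\int_\dom \prod_s \partial^s v\,\dd x\big| \le \int_\dom |\prod_s \partial^s v|\,\dd x$ term by term on this finite sum and apply the already-established pointwise estimate to each summand (all orders now $\le m$, total still $\le 2m$), obtaining the claimed $\lesssim \|v\|_{\Hm{m}}^{|S|}$. The main obstacle is purely the bookkeeping of this reduction: confirming that it terminates and never manufactures a new factor of order $>m$. Both follow from the single observation that before each step the non-offending factors all have order $<m$, hence remain $\le m$ after absorbing one derivative, so that the maximal order is driven down to $m$ in finitely many steps.
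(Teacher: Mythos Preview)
Your proof is correct and follows essentially the same route as the paper's: first establish the pointwise bound via the generalized H\"older inequality with exponents $2m/|s|$ and Corollary \ref{cor:hjm}, then reduce the general case by iterated integration by parts to bring the single high-order factor down to order $m$. The paper is terser on the bookkeeping but records the slightly sharper observation that when $|s^\ast|=m+N$ the \emph{sum} of the remaining orders is at most $m-N$, hence each remaining order is $\le m-N$; this is what actually makes your inductive claim ``before each step the non-offending factors all have order $<m$'' hold (after $k<N$ steps each has order $\le m-N+k<m$), whereas your stated bound $(m-1)+1=m$ alone would not survive a second iteration.
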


\begin{proof}
	First assume that $\max_S |s| \leq m$. By corollary \ref{cor:hjm}, for $s \in S$, $\partial^s v \in \Lp{2m/|s|}.$ Also, $$\sum_{s \in S} \frac{1}{2m/|s|} := \frac{1}{r} \leq 1.$$ We will use the the generalized H\"{o}lder's inequality in Lemma \ref{lem:sobin} (item 3). We have 
	\begin{align*}
	|\int_{\dom} \prod_{s \in S} \partial^s v \dd x| &\leq \int_{\dom} |\prod_{s \in S} \partial^s v| \dd x \\
	&\lesssim ||\prod_{s \in S} \partial^s v||_{\Lp{r}} \\ &\leq \prod_{s \in S} ||\partial^s v||_{\Lp{2m/|s|}} \\ &\lesssim \prod_{s \in S} ||v||_{\Hm{m}} \\
	&= ||v||_{\Hm{m}}^{|S|}.
	\end{align*}
	The second line comes from noting that $r \geq 1$ so that the $\Lp{1}$-norm in the first line is controlled by the $\Lp{r}$-norm. The third line comes from the Generalized H\"{o}lder's inequality and the fourth line comes from Corollary \ref{cor:hjm}.
 
If there is $s \in S$ such that $|s| = m + N$ for some $N>0$ then the sum of all remaining derivative orders cannot exceed $m-N$. By applying integration by parts $N$ times, without boundary terms, we can arrange a sum of terms on which the largest order derivative is $m$ and the sum of all remaining derivative orders is no larger than $m$. Then the preceding argument holds.
\end{proof}

\begin{lemma}[Expanding Products in Differences]
	\label{lem:expand}	
Let $S$ be a finite index set of size $|S|$ and place an (arbitrary) strong ordering $\prec$ on its elements so $S = \{ s_1, s_2, ..., s_{|S|} \}$ where $s_i \prec s_{i+1}$ for $i=1, ..., |S|-1.$ Suppose that we enumerate $\{f_s\}_{s \in S}$ and $\{g_s\}_{s \in S}$ in such a manner. Then $$\prod_{i=1}^{|S|} f_{s_i} - \prod_{i=1}^{|S|} g_{s_i} = \sum_{i=1}^{|S|} (f_{s_i} - g_{s_i}) \left( \prod_{1\leq j<i} f_{s_j} \prod_{|S|\geq k > i} g_{s_k}  \right) .$$	
\end{lemma}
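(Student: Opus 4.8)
The plan is to prove this purely algebraic identity by exhibiting an explicit telescoping sum that interpolates between the all-$g$ product and the all-$f$ product, swapping a single factor at each step. This avoids any induction machinery, although induction on $|S|$ would be an equally viable alternative. The ordering $\prec$ is used only to fix a consistent enumeration $s_1 \prec s_2 \prec \cdots \prec s_{|S|}$; since both sides are invariant under any reindexing of the factors, the choice of ordering is immaterial.

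First I would define, for each $i = 0, 1, \dots, |S|$, the hybrid product
$$P_i := \left( \prod_{1 \le j \le i} f_{s_j} \right) \left( \prod_{i < k \le |S|} g_{s_k} \right),$$
with the convention that an empty product equals $1$. Then $P_0 = \prod_{k=1}^{|S|} g_{s_k}$ (every factor is a $g$) while $P_{|S|} = \prod_{j=1}^{|S|} f_{s_j}$ (every factor is an $f$), so the left-hand side of the claim is the telescoping difference
$$\prod_{i=1}^{|S|} f_{s_i} - \prod_{i=1}^{|S|} g_{s_i} = P_{|S|} - P_0 = \sum_{i=1}^{|S|} (P_i - P_{i-1}).$$

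Next I would compute a single increment $P_i - P_{i-1}$. Both $P_i$ and $P_{i-1}$ share the common factors $\prod_{1 \le j < i} f_{s_j}$ (the first $i-1$ slots, all $f$) and $\prod_{k > i} g_{s_k}$ (the last $|S|-i$ slots, all $g$); they differ only in the slot at index $i$, which carries $f_{s_i}$ in $P_i$ and $g_{s_i}$ in $P_{i-1}$. Factoring out the common part gives
$$P_i - P_{i-1} = (f_{s_i} - g_{s_i}) \left( \prod_{1 \le j < i} f_{s_j} \right) \left( \prod_{k > i} g_{s_k} \right),$$
and substituting this into the telescoping sum reproduces the claimed formula verbatim.

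There is no substantive obstacle here; the identity is elementary. The only things to handle with care are the empty-product conventions at the two endpoints (the empty $f$-product when $i=1$ and the empty $g$-product when $i=|S|$), which are exactly what make $P_0$ and $P_{|S|}$ come out to the pure $g$- and $f$-products, and the verification that the common factors in consecutive hybrids coincide so that the difference collapses onto the single swapped slot.
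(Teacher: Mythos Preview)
Your proof is correct. The paper proves this lemma by induction on $|S|$: the base case $|S|=1$ is trivial, and the inductive step peels off the last factor via $f_1\cdots f_{n+1} - g_1\cdots g_{n+1} = f_1\cdots f_n(f_{n+1}-g_{n+1}) + (f_1\cdots f_n - g_1\cdots g_n)g_{n+1}$, then applies the hypothesis to the second term. Your telescoping argument is essentially the unrolled version of that induction: defining the hybrid products $P_i$ and summing the increments $P_i - P_{i-1}$ accomplishes in one stroke what the induction does one factor at a time. The two arguments are logically equivalent, but yours is more direct and makes the structure of the identity (a path of single-factor swaps from the all-$g$ product to the all-$f$ product) more transparent, at the modest cost of introducing the auxiliary notation $P_i$.
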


\begin{proof}
	We proceed by induction on $|S|$ and assume without loss of generality that $S=\{1,2,...,n\}.$ If $|S| = 1$ then we only have $f_1 - g_1$ so that both products are empty and the sum is just $f_1 - g_1.$ Otherwise, suppose the identity holds for $|S| = n$. Then
	\begin{align*}
	f_1 ... f_{n+1} - g_1 ... g_{n+1} &= f_1 ... f_n (f_{n+1} - g_{n+1}) + \big( f_1 ... f_n - g_1 ... g_n \big) g_{n+1} \\ &=  f_1 ... f_n (f_{n+1} - g_{n+1}) + \sum_{i=1}^{n} \left( (f_i - g_i) \prod_{1\leq j<i} f_j \prod_{n \geq k>i} g_k  \right) g_{n+1} \\
&= (f_{n+1}-g_{n+1}) \prod_{1 \leq j < n+1} f_j \overbrace{\prod_{n+1 \geq k > n+1} g_k}^{=1} \\ &+ \sum_{i=1}^n \left( (f_i - g_i) \prod_{1 \leq j<i} f_j \prod_{n+1 \geq k>i} g_k  \right)  \\ 
 &= \sum_{i=1}^{n+1} \left( (f_i - g_i) \prod_{1 \leq j<i} f_j \prod_{n+1 \geq k>i} g_k \right).
	\end{align*}
\end{proof}

\begin{lemma}
	\label{lem:prod}
Let $\Omega \subset \mathbb{R}^d$ be bounded. Let $f \in \mathcal{C}^m(\Omega)$ and $g \in H^m(\Omega)$ for an integer $m\geq0$. Then $||fg||_{H^m(\Omega)} \lesssim 
||g||_{H^m(\Omega)}.$
\end{lemma}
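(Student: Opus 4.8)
The plan is to reduce everything to the Leibniz product rule and the fact that, since $f \in \mathcal{C}^m(\Omega)$ with finite $\mathcal{C}^m$-norm, every derivative of $f$ up to order $m$ is bounded in $L^\infty(\Omega)$. The claim is really just a bookkeeping exercise, so the implicit constant in $\lesssim$ will depend on $\|f\|_{\mathcal{C}^m(\Omega)}$, and I expect no genuine analytic obstacle.

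First I would dispose of the trivial case $m=0$, where $H^0(\Omega) = L^2(\Omega)$ and $\|fg\|_{L^2} \leq \|f\|_{L^\infty}\|g\|_{L^2} \lesssim_f \|g\|_{L^2}$ immediately. For $m \geq 1$, I would fix a multi-index $\alpha$ with $|\alpha| \leq m$ and apply the Leibniz rule to obtain
\begin{equation*}
\partial^\alpha (fg) = \sum_{\beta \leq \alpha} \binom{\alpha}{\beta} \, \partial^\beta f \, \partial^{\alpha - \beta} g.
\end{equation*}
Since $|\beta| \leq |\alpha| \leq m$, each factor $\partial^\beta f$ is continuous with $\|\partial^\beta f\|_{L^\infty(\Omega)} \leq \|f\|_{\mathcal{C}^m(\Omega)} < \infty$, and each $\partial^{\alpha-\beta} g \in L^2(\Omega)$ because $|\alpha - \beta| \leq m$ and $g \in H^m(\Omega)$.

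Next I would take the $L^2$-norm of the above and use the triangle inequality together with Hölder (pairing $L^\infty$ with $L^2$):
\begin{align*}
\| \partial^\alpha (fg) \|_{L^2(\Omega)}
&\leq \sum_{\beta \leq \alpha} \binom{\alpha}{\beta} \, \| \partial^\beta f \|_{L^\infty(\Omega)} \, \| \partial^{\alpha-\beta} g \|_{L^2(\Omega)} \\
&\lesssim_f \sum_{\beta \leq \alpha} \| \partial^{\alpha-\beta} g \|_{L^2(\Omega)}
\lesssim \| g \|_{H^m(\Omega)},
\end{align*}
where the last step absorbs the finitely many binomial coefficients into the constant and bounds each $\| \partial^{\alpha-\beta} g\|_{L^2}$ by $\|g\|_{H^m(\Omega)}$.

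Finally I would square and sum over all $|\alpha| \leq m$. Since the number of such multi-indices is finite and depends only on $m$ and $d$, summing the estimate above gives $\|fg\|_{H^m(\Omega)}^2 \lesssim_{f,m,d} \|g\|_{H^m(\Omega)}^2$, which is the desired bound after taking square roots. The only point requiring a moment of care is the interpretation of $\mathcal{C}^m(\Omega)$: I would read it (consistently with Lemma \ref{lem:compm}) as the space of $m$-times continuously differentiable functions with finite $\mathcal{C}^m$-norm, so that the $L^\infty$-bounds on $\partial^\beta f$ used above are legitimate. No further subtlety arises, and the boundedness of $\Omega$ is needed only to guarantee the embeddings used implicitly are on a finite-measure domain.
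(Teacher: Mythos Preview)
Your proof is correct and follows essentially the same approach as the paper: Leibniz rule, bound each $\partial^\beta f$ in $L^\infty$ via the $\mathcal{C}^m$-norm of $f$, bound each $\partial^{\alpha-\beta} g$ in $L^2$ via the $H^m$-norm of $g$, and sum over finitely many multi-indices. The only cosmetic difference is that the paper squares first and bounds the expanded square directly, whereas you take the $L^2$-norm and apply the triangle inequality before squaring.
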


\begin{remark}
	If $m>d/2$ then since $f \in \mathcal{C}^m(\Omega)$, we have $f \in H^m(\Omega)$, too. The Banach algebra property for $m>d/2$ makes the proof trivial.
\end{remark}

\begin{proof}[Proof of Lemma \ref{lem:prod}]
	\begin{align*}
	||fg||_{H^m(\Omega)}^2 &= \sum_{|\alpha|\leq m} \int_{\Omega} |\partial^\alpha (fg)|^2 \dd x \\ &= \sum_{|\alpha|\leq m} \int_{\Omega} \left| \sum_{\beta \leq \alpha} {\alpha \choose \beta} \partial^\beta f \partial^{\alpha-\beta} g \right| ^2 \dd x \\
	&\leq \sum_{|\alpha| \leq m} |\alpha|!^2 (\sup_{\substack{|\beta| \leq |\alpha| \\ x \in \Omega}} |\partial^\beta f|^2) \int_\Omega \sum_{\beta \leq \alpha} |\partial^{\alpha-\beta} g|^2 \dd x \\
	&\lesssim_{f,m} ||g||_{H^m(\Omega)}^2.
	\end{align*}
\end{proof}

\begin{proposition}
	\label{prop:chain}
	Let $f: \mathcal{D} \subset \mathbb{R} \to \mathbb{R}$ and $g_1, g_2: \Omega \subset \mathbb{R}^{d} \to \mathcal{D}$ be such that $f \in \mathcal{C}^m(\mathcal{D})$ and $g_1, g_2 \in H^m(\Omega)$ for an integer $m > d/2.$ If $f(0) = 0$ and $f$ has bounded derivatives on $\mathcal{D}$ up to order $m$ then \begin{multline*} ||f \circ g_1 - f \circ g_2||_{H^m(\Omega)} \lesssim_{m, ||g_1||_{H^m(\Omega)}} ||g_1-g_2||_{H^m(\Omega)} + \mathcal{O}( ||g_1-g_2||_{H^m(\Omega)}^2  ), \quad ( ||g_1-g_2||_{H^m(\Omega)} \downarrow 0).\end{multline*}  
\end{proposition}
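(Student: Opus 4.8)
The plan is to reduce everything to estimating the $\Lp{2}$-norms of $\partial^\alpha(f\circ g_1)-\partial^\alpha(f\circ g_2)$ for $|\alpha|\le m$, to expand each composite derivative with Fa\`a di Bruno's formula, and then to show that each resulting term is linear in $g_1-g_2$ at leading order. First I would note that $f(0)=0$ together with Lemma \ref{lem:compm} guarantees $f\circ g_1, f\circ g_2\in\Hm{m}$, so the left-hand side is well defined. For the order-zero contribution, $||f\circ g_1-f\circ g_2||_{\Lp{2}}\le||f'||_{\Lp{\infty}}||g_1-g_2||_{\Lp{2}}\le||f'||_{\Lp{\infty}}||g_1-g_2||_{\Hm{m}}$ follows at once from the mean value theorem.

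For $1\le|\alpha|\le m$, Fa\`a di Bruno's formula writes $\partial^\alpha(f\circ g_i)=\sum_{\beta\in\mathcal{P}(\alpha)}f^{(|\beta|)}(g_i)\prod_{\sigma\in\beta}\partial^\sigma g_i$, and since the index set $\mathcal{P}(\alpha)$ is common to $i=1,2$, I can subtract partition by partition. For a fixed $\beta$ I would split the difference by adding and subtracting a mixed term,
$$
f^{(|\beta|)}(g_1)\prod_{\sigma\in\beta}\partial^\sigma g_1-f^{(|\beta|)}(g_2)\prod_{\sigma\in\beta}\partial^\sigma g_2
=\big[f^{(|\beta|)}(g_1)-f^{(|\beta|)}(g_2)\big]\prod_{\sigma\in\beta}\partial^\sigma g_1
+f^{(|\beta|)}(g_2)\Big[\prod_{\sigma\in\beta}\partial^\sigma g_1-\prod_{\sigma\in\beta}\partial^\sigma g_2\Big],
$$
isolating an \emph{$f$-difference} piece and a \emph{product-difference} piece.

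For the product-difference piece I would apply Lemma \ref{lem:expand} to turn $\prod\partial^\sigma g_1-\prod\partial^\sigma g_2$ into a telescoping sum in which every summand carries exactly one factor $\partial^{\sigma_i}(g_1-g_2)$, the remaining factors being derivatives of $g_1$ or $g_2$; the prefactor $f^{(|\beta|)}(g_2)$ is bounded in $\Lp{\infty}$ by hypothesis. Because the total derivative order in each product is $\sum_{\sigma\in\beta}|\sigma|=|\alpha|\le m$, I would estimate in $\Lp{2}$ by the generalized H\"older inequality (Lemma \ref{lem:sobin}) with the Gagliardo--Nirenberg exponents of Corollary \ref{cor:hjm}: assigning $L^{2m/|\sigma|}$ to each factor makes the reciprocal exponents sum to $|\alpha|/(2m)\le 1/2$, so the product lies in some $L^r\subset\Lp{2}$ and is controlled by $||g_1-g_2||_{\Hm{m}}$ times powers of $||g_1||_{\Hm{m}}$ and $||g_2||_{\Hm{m}}$. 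For the $f$-difference piece I would use the mean value theorem, $|f^{(|\beta|)}(g_1)-f^{(|\beta|)}(g_2)|\le||f^{(|\beta|+1)}||_{\Lp{\infty}}|g_1-g_2|$, placing $g_1-g_2$ in $\Lp{\infty}$ (legitimate since $m>d/2$) and the surviving product $\prod\partial^\sigma g_1$ in $\Lp{2}$ via Corollary \ref{cor:okprod}, which again yields a single power of $||g_1-g_2||_{\Hm{m}}$.

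Summing over the finitely many $\alpha$ and $\beta$, every term is $\lesssim_{m,||g_1||_{\Hm{m}},||g_2||_{\Hm{m}}}||g_1-g_2||_{\Hm{m}}$; rewriting $||g_2||_{\Hm{m}}\le||g_1||_{\Hm{m}}+||g_1-g_2||_{\Hm{m}}$ absorbs the $g_2$-dependence into a constant depending only on $||g_1||_{\Hm{m}}$, plus strictly higher-order corrections, producing the claimed $\lesssim_{m,||g_1||_{\Hm{m}}}||g_1-g_2||_{\Hm{m}}+\mathcal{O}(||g_1-g_2||_{\Hm{m}}^2)$. The main obstacle is the top-order partition with $|\beta|=m$ (all singletons, pairing $f^{(m)}$ with a product of $m$ first derivatives): here the product-difference estimate requires the full \emph{tame} balancing of Gagliardo--Nirenberg exponents so that the one difference factor carries the entire $\Hm{m}$-norm while the others remain in $\Lp{\infty}$-type spaces, and the $f$-difference estimate formally calls for Lipschitz control of $f^{(m)}$, i.e. one derivative beyond $\mathcal{C}^m$ — harmless in the intended application, where the composed function $M$ is $\mathcal{C}^\infty$.
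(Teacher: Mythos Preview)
Your proof is correct and follows essentially the same route as the paper: Fa\`a di Bruno expansion, the same add-and-subtract splitting into an $f$-difference term and a product-difference term (with the roles of $g_1$ and $g_2$ swapped, which is immaterial), Lemma \ref{lem:expand} for the telescoping product, and generalized H\"older with Gagliardo--Nirenberg exponents from Corollary \ref{cor:hjm}. You also correctly identify the need for one extra derivative on $f$ in the top-order $f$-difference term, a gap the paper's proof shares (it silently uses a Lipschitz bound on $f^{(|\beta|)}$ with $|\beta|$ up to $m$) and which, as you note, is harmless for the intended application to $M\in\mathcal{C}^\infty$.
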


\begin{remark}
	\label{rmk:fog}
	In other words $f \circ g$ is locally Lipschitz in $H^m(\Omega)$ when acting upon functions of $H^m(\Omega).$ We also note that in examining the proof, that the corresponding Lipschitz constant can be expressed as a continuous function of $||g_1||_{H^m(\Omega)}$ where the constant $\lesssim_m (1+||g_1||_{H^m(\Omega)}^{m}).$
\end{remark} 

\begin{proof}[Proof of Proposition \ref{prop:chain}]
	For convenience we denote $K_m$ to be the bound on the absolute value of the $m^{\text{th}}$ derivative of $f$.
	
	We need to compute $$\sum_{0 \leq |\alpha| \leq m} \int_\Omega |\partial^\alpha (f \circ g_1 - f \circ g_2)|^2 \dd x.$$
	Note that when $\alpha = 0$ we have $|\partial^\alpha (f \circ g_1 - f \circ g_2)| = |f \circ g_1 - f \circ g_2 | \leq K_1 |g_1 - g_2|$. Thus, $\int_\Omega |\partial^\alpha (f \circ g_1 - f \circ g_2)|^2 \dd x \leq K_1^2 ||g_1 - g_2||_{H^0(\Omega)}^2 \leq K_1^2 ||g_1 - g_2||_{H^m(\Omega)}^2.$	
	For $|\alpha| > 0$, we begin by examining \begin{align} |\partial^\alpha (f \circ g_1 - f \circ g_2)|^2 &= \left( \sum_{\beta \in \mathcal{P}(\alpha)} \left( f^{(|\beta|)} (g_1) \prod_{\sigma \in \beta} \partial^\sigma g_1 - f^{(|\beta|)} (g_2) \prod_{\sigma \in \beta} \partial^\sigma g_2 \right) \right)^2 \nonumber \\ &= \sum_{\beta, \beta' \in \mathcal{P}(\alpha)} \left( f^{(|\beta|)} (g_1) \prod_{\sigma \in \beta} \partial^\sigma g_1 - f^{(|\beta|)} (g_2) \prod_{\sigma \in \beta} \partial^\sigma g_2 \right) \times \nonumber \\& \left( f^{(|\beta'|)} (g_1) \prod_{\sigma' \in \beta'} \partial^{\sigma'} g_1 - f^{(|\beta'|)} (g_2) \prod_{\sigma' \in \beta'} \partial^{\sigma'} g_2 \right). \label{eq:fogd2}
	\end{align} 
	
	Thus, the squared $\alpha$-th derivative is a sum of products of two factors of the form $$\left( f^{(|\beta|)} (g_1) \prod_{\sigma \in \beta} \partial^\sigma g_1 - f^{(|\beta|)} (g_2) \prod_{\sigma \in \beta} \partial^\sigma g_2 \right).$$ We now attempt to prove each of such factors are in $L^2(\Omega)$ so that Cauchy-Schwarz can be utilized. Assuming this can be done, \begin{align*} 
&|| f^{(|\beta|)} (g_1) \prod_{\sigma \in \beta} \partial^\sigma g_1 - f^{(|\beta|)} (g_2) \prod_{\sigma \in \beta} \partial^\sigma g_2 ||_{L^2(\Omega)} \\ 
&\leq \overbrace{|| f^{(|\beta|)} (g_1) \left( \prod_{\sigma \in \beta} \partial^\sigma g_1 - \prod_{\sigma \in \beta} \partial^\sigma g_2 \right) ||_{L^2(\Omega)}}^{T_1} \\ &+ \overbrace{|| ( f^{(|\beta|)} (g_1) - f^{(|\beta|)} (g_2) ) \prod_{\sigma \in \beta} \partial^\sigma g_2||_{L^2(\Omega)}}^{T_2}. \end{align*}
	
	Now we must justify that both terms on the right-hand side are in fact in $L^2(\Omega)$ and, more importantly, with a nice dependence upon $||g_1 - g_2||_{H^m(\Omega)}.$ We commence with $T_2$ noting that from Corollary \ref{cor:hjm}, each $\partial^\sigma g_2 \in L^{2|\alpha|/|\sigma|}(\Omega)$ and, furthermore, $\sum_{\sigma \in \beta} |\sigma| = |\alpha| \leq m.$ Thus, by the generalized H\"{o}lder's inequality with $\sum_{\sigma \in \beta} \frac{1}{2|\alpha|/|\sigma|} = \frac{1}{2}$, the product must be in $L^2(\Omega).$ As the difference term in $T_2$ is bounded by $K_{|\beta|} ||g_1-g_2||_{L^\infty(\Omega)} \lesssim ||g_1-g_2||_{H^m(\Omega)}$, we have \begin{align*} T_2 &\lesssim ||g_1-g_2||_{H^m(\Omega)} || \prod_{\sigma \in \beta} \partial^\sigma g_2||_{L^2(\Omega)} \\ 
	&\leq ||g_1-g_2||_{H^m(\Omega)} \prod_{\sigma \in \beta} ||\partial^\sigma g_2||_{L^{2|\alpha|/|\sigma|}(\Omega)} \\ 
	&\leq ||g_1-g_2||_{H^m(\Omega)} \prod_{\sigma \in \beta} ||g_2||_{H^m(\Omega)} \\
	&\leq ||g_1-g_2||_{H^m(\Omega)} (||g_1||_{H^m(\Omega)} + ||g_2-g_1||_{H^m(\Omega)})^{|\beta|} \\
	&\lesssim 
	||g_1||_{H^m(\Omega)}^{|\beta|} ||g_1 - g_2||_{H^m(\Omega)} + \mathcal{O}(||g_1 - g_2||_{H^m(\Omega)}^2). 
	\end{align*}

	To manage $T_1$ we need Lemma \ref{lem:expand}, assuming an ordering on the multi-indices with $L(\sigma) = \{\psi \in \beta | \psi \prec \sigma \}$ and $G(\sigma) = \{\psi \in \beta | \sigma \prec \psi \}$ ($\prec$ denotes the ordering of the multi-indices in $\beta$), and employ similar logic as for $T_2$:

	\begin{align*}
	T_1 &= \left| \left| f^{(|\beta|)} (g_1) \left( \sum_{\sigma \in \beta} (\partial^\sigma g_1 - \partial^\sigma g_2) \prod_{\sigma' \in L(\sigma)} \partial^{\sigma'} g_1 \prod_{\sigma'' \in G(\sigma)} \partial^{\sigma''} g_2 \right) \right| \right|_{L^2(\Omega)} \\ 
	&\leq K_{|\beta|} \sum_{\sigma \in \beta} \left| \left| (\partial^\sigma g_1 - \partial^\sigma g_2) \prod_{\sigma' \in L(\sigma)} \partial^{\sigma'} g_1 \prod_{\sigma'' \in G(\sigma)} \partial^{\sigma''} g_2 \right| \right|_{L^2(\Omega)} \\ 
	&\leq K_{|\beta|} \sum_{\sigma \in \beta} || \partial^\sigma(g_1 - g_2)||_{L^{2|\alpha|/|\sigma|}} \prod_{\sigma' \in L(\sigma)} ||\partial^{\sigma'} g_1||_{L^{2|\alpha|/|\sigma'|}} \prod_{\sigma'' \in G(\sigma)} ||\partial^{\sigma''} g_2 ||_{L^{2|\alpha|/|\sigma''|}}\\
	&\leq K_{|\beta|} \sum_{\sigma \in \beta} ||g_1 - g_2||_{H^m(\Omega)} \max(||g_1||_{H^m(\Omega)}, ||g_2||_{H^m(\Omega)})^{|\beta|-1} \\
	&\leq K_{|\beta|} \sum_{\sigma \in \beta} ||g_1 - g_2||_{H^m(\Omega)} (||g_1||_{H^m(\Omega)} + ||g_2-g_1||_{H^m(\Omega)})^{|\beta|-1} \\
	&\lesssim ||g_1-g_2||_{H^m(\Omega)} ||g_1||_{H^m(\Omega)}^{|\beta|-1} + \mathcal{O}(||g_1-g_2||_{H^m(\Omega)}^2).
	\end{align*}	
	Here, we again used that $\frac{1}{2|\alpha|/\sigma} + \sum_{\sigma' \in L(\sigma)} \frac{1}{2|\alpha|/\sigma'} + \sum_{\sigma'' \in G(\sigma)} \frac{1}{2|\alpha|/\sigma''} = 1/2$ so that the overall product is in $L^2(\Omega)$ and by Lemma \ref{lem:sobin}, we can multiply the terms. We again remark that this bound has an $\mathcal{O}(||g_1 - g_2||_{H^m(\Omega)})$ dependence.
	
	The result of analyzing $T_1$ and $T_2$ and the $\alpha=0$ case is that $$|| f \circ g_1 - f \circ g_2 ||_{H^m(\Omega)}^2 \lesssim (1 + ||g_1||^{2m}) ||g_1 - g_2||_{H^m(\Omega)}^2 + \mathcal{O}(||g_1 - g_2||_{\Hm{m}}^3).$$	
\end{proof}

\subsection{Proofs of Major Results not Included in Main Body}

\subsubsection{Proof of Lemma \ref{lem:compm}}

\begin{proof}[Proof of Lemma \ref{lem:compm}]
We compute as follows:
\begin{align*}
||f \circ g||_{H^m(\Omega)}^2 &= \int_\Omega f(g(x))^2 \dd x + \sum_{0<|\alpha|\leq m} \int_\Omega |\partial^\alpha (f \circ g)|^2 \dd x \\
&\leq K^2 \int_\Omega g(x)^2 \dd x + \sum_{0 < |\alpha| \leq m} \int_\Omega |\sum_{\beta \in \mathcal{P}(\alpha)} f^{(|\beta|)}(g) \prod_{\sigma \in \beta} \partial^\sigma g|^2 \dd x \\
&\lesssim ||g||_{H^m(\Omega)}^2 + \sum_{0 < |\alpha| \leq m} \sum_{\beta,\beta' \in \mathcal{P}(\alpha)} \int_\Omega |f^{(|\beta|)}(g) f^{(|\beta'|)}(g) \prod_{\sigma \in \beta} \partial^\sigma g \prod_{\sigma' \in \beta'} \partial^{\sigma'} g| \dd x \\
&\lesssim ||g||_{H^m(\Omega)}^2 + \sum_{0 < |\alpha| \leq m} \sum_{\beta,\beta' \in \mathcal{P}(\alpha)} \int_\Omega | \prod_{\sigma \in \beta} \partial^\sigma g \prod_{\sigma' \in \beta'} \partial^{\sigma'} g | \dd x.
\end{align*}

At this point, we note that for each $\alpha$, $\sum_{\sigma \in \beta} |\sigma| = |\alpha| \leq m$ with $1 \leq |\beta| \leq |\alpha|$ and likewise for the primes. Using Corollary \ref{cor:okprod}: for each $\alpha$, the terms in the summand are $\lesssim ||g||_{H^m(\Omega)}^{2} + ||g||_{H^m(\Omega)}^{2 |\alpha|}$ (there are at least $1$ and no more than $|\alpha|$ factors in each $\prod$-product). Whence, summing over $\alpha$, and taking square roots, we obtain the first desideratum.

From here on, we assume that $m>d+2$ and we define $\sigma^*(m) = m/2+1$. By Sobolev embedding, all derivative orders $j$ with $j \leq \sigma^*(m)$ are in $L^\infty(\Omega).$

We also define $L^*(\alpha) = \{ \beta \in \mathcal{P}(\alpha) | \quad \max_{\sigma \in \beta} |\sigma| \leq \sigma^*(m) \}$, and  \\
$U^*(\alpha) = \{ \beta \in \mathcal{P}(\alpha) | \quad \max_{\sigma \in \beta} |\sigma| > \sigma^*(m) \}.$ This separates a partition into sets of multiindices where the maximum order is ``small" or ``not small." Note then that $\mathcal{P}(\alpha) = L^*(\alpha) \cup U^*(\alpha)$ and $|U^*(\alpha)| \leq 1.$ To see this, note that $\sum_{\sigma \in U^*(\alpha)} |\sigma| \geq |U^*(\alpha)| (m/2+1)$. Then if $|U^*(\alpha)| \geq 2$ then $\sum_{\sigma \in U^*(\alpha)} |\sigma| \geq m+2 > m$ and this is a contradiction as the sum of derivative orders cannot exceed $m$. Continuing from before,
\begin{align*}
||f \circ g||_{H^m(\Omega)}^2 &\lesssim ||g||_{H^m(\Omega)}^2 + \sum_{0 < |\alpha| \leq m} \sum_{\beta,\beta' \in \mathcal{P}(\alpha)} \int_\Omega | \prod_{\sigma \in \beta} \partial^\sigma g \prod_{\sigma' \in \beta'} \partial^{\sigma'} g | \dd x \\
&\lesssim ||g||_{H^m(\Omega)}^2 + \overbrace{\sum_{0 < |\alpha| \leq m} \sum_{\substack{\beta \in L^*(\alpha) \\ \beta' \in L^*(\alpha)}} \prod_{\sigma \in \beta} ||\partial^\sigma g||_{L^\infty(\Omega)} \prod_{\sigma' \in \beta'} ||\partial^{\sigma'} g||_{L^\infty(\Omega)}}^{T_1} \\
&+ 2 \overbrace{\sum_{0 < |\alpha| \leq m} \sum_{\substack{\beta \in L^*(\alpha) \\ \beta' \in U^*(\alpha)}} \prod_{\sigma \in \beta} ||\partial^\sigma g||_{L^\infty(\Omega)} \prod_{\substack{ \sigma' \in \beta' \\ |\sigma'| \leq \sigma^*(m)}} ||\partial^{\sigma'} g||_{L^\infty(\Omega)} \int_\Omega |\partial^{\bar \beta'} g| \dd x}^{T_2} \\
&+ \overbrace{\sum_{0 < |\alpha| \leq m} \sum_{\substack{\beta \in U^*(\alpha) \\ \beta' \in U^*(\alpha)}} \prod_{\substack{\sigma \in \beta \\ |\sigma| \leq \sigma^*(m)}} ||\partial^\sigma g||_{L^\infty(\Omega)} \prod_{\substack{ \sigma' \in \beta' \\ |\sigma'| \leq \sigma^*(m)}} ||\partial^{\sigma'} g||_{L^\infty(\Omega)} \int_\Omega |\partial^{\bar \beta} g \partial^{\bar \beta'}g| \dd x}^{T_3},
\end{align*}
where $\bar \beta$ and $\bar \beta'$ denote the single derivative of maximum order in those sets $\beta$ and $\beta'$. By construction, the $L^\infty(\Omega)$-norms of the derivatives in the $L^*$ sets are controlled by $||g||_{H^m(\Omega)}.$ We denote $\ul{C} = ||g||_{\mathcal{C}^{b^*}}$ where $b^*$ is the largest integer not exceeding $\sigma^*$. Then $$T_1, T_2, T_3 \lesssim ||g||_{H^m(\Omega)}^2 (1 + \ul{C}^{2m-2}),$$ which we can see by bounding each term of $T_1$-$T_3$ by a factor with $||g||_{H^m(\Omega)}^2$ times a remaining factor that has zero factors at minimum (the $1$) or $2m-2$ factors at maximum (the $\ul{C}^{2m-2}$). We also used that the integrand in $T_2$ is in $L^2(\Omega)$ and both factors in the integrand of $T_3$ are in $L^2(\Omega).$ We therefore have that
\begin{align*}
||f \circ g||_{H^m(\Omega)}^2 &\lesssim ||g||_{H^m(\Omega)}^2 (1 + \ul{B}^2) \implies \\
||f \circ g||_{H^m(\Omega)} &\lesssim ||g||_{H^m(\Omega)} (1 +  \ul{B})
\end{align*}
where and $\ul{B}^2 \sim \ul{C}^{2m-2}$. 

\end{proof}

\subsubsection{Proof of Lemma \ref{lem:supportenergy}}

\begin{proof}[Proof of Lemma \ref{lem:supportenergy}]
We first note that with $m>d+3$, then derivative orders up to and including $m/2+3/2$ are controlled by $||g||_{\Hm{m}}$ (in particular, derivatives of orders up to and including $m/2+1$ are controlled) and thereby $\bar B^{1/(m+1)} \lesssim ||g||_{\Hm{m}}.$ Above $m/2+3/2$, derivatives are in $\Lp{2}$ up to order $m$. The set $\sigma$ can have no more than $1$ element of order larger than $\sigma^* = m/2+1$.

If $|\varkappa|=0$ then $\sum_{\zeta \in \sigma} |\zeta| \leq |\alpha| + 2$ and $\max_{\zeta \in \sigma} |\zeta| \leq |\alpha|.$ We seek to bound $$|\int_{\Td} Q \prod_{\zeta \in \sigma} \partial^\zeta g \dd x|.$$ If there is one $\zeta \in \sigma$ with $|\zeta| > \sigma^*$, it can be pulled out into an $L^2$-product with $\partial^\alpha$ with the remaining $0$ to $|\alpha|+1$ $g$-factors bounded in $\Lp{\infty}$ by $\bar B^{1/(m+1)}$ along with $Q$ being bounded itself. If there are no such $\zeta$, any one can be chosen for an $L^2$-product with $\partial^\alpha$ and the remaining factors again bounded. The resulting bound would be $\lesssim ||g||_{\Hm{m}} (1 + \bar B^{(|\alpha|+1)/(m+1)}).$

If $|\varkappa|=1$ then $\sum_{\zeta \in \sigma} |\zeta| \leq |\alpha| + 1$ and $\max_{\zeta \in \sigma} |\zeta| \leq |\alpha|.$ Suppose $\varkappa = \{ e \}.$ We seek to bound $$|\int_{\Td} Q \partial^e g \prod_{\zeta \in \sigma} \partial^\zeta g \dd x|.$$ If there is a single $\zeta \in \sigma$ where $|\zeta| > \sigma^*$, it can be pulled out with an $L^2$-product with $\partial^\alpha$ and the remaining $1$ to $|\alpha|$ $g$-factors along with $Q$ bounded. If there are no such $\zeta$ then $\partial^\alpha g \partial^e g$ can form an $L^2$-product with the remaining $1$ to $|\alpha|+1$ $g$-factors and $Q$ bounded. This results in a bound $\lesssim ||g||_{\Hm{m}}^2 ( \bar B^{1/(m+1)} + \bar B^{(|\alpha|+1)/(m+1)}).$

Combining both results, we bound with $(1+ \bar B) ||g||_{\Hm{m}}^2$.
\end{proof}

\subsubsection{Proof of Lemma \ref{lem:sobbounds}}

\begin{proof}[Proof of Lemma \ref{lem:sobbounds}]
	1. We have \begin{align*} 
	||\rho u||_{\Hm{m}} &= ||u^+ \rho - \kappa M(\rho)||_{\Hm{m}} \\
	&\lesssim u^+ ||\rho||_{\Hm{m}} + ||\kappa||_{\Hm{m}} ||M(\rho)||_{\Hm{m}} \\
	&\lesssim_m 
	 ||\rho||_{\Hm{m}}+||\rho||_{\Hm{m}}^{m}.
	\end{align*}
	We used Lemma \ref{lem:compm}. 
	
	2. $||\lap(\rho u)||_{\Hm{m}} \leq ||\rho u||_{\Hm{m+2}}.$
	
	3. Trivial: $||\omega \rho||_{\Hm{m}} \lesssim 
	||\omega||_{\Hm{m}} ||\rho||_{\Hm{m}}.$
	
	4. We compute
	\begin{align*}
	||I[\gamma \rho u]||_{\Hm{m}}^2 &= \sum_{|\alpha| \leq m} \int_{\dom} \left| \partial_x^\alpha (\int_{\dom} \tau(y,x,t) \gamma \rho u|_y \dd y) \right|^2 \dd x \\
	&= \sum_{|\alpha| \leq m} \int_{{\dom}} \left| \int_{\dom} \partial^\alpha_x \tau(y,x,t) \gamma \rho u|_y \dd y \right|^2 \dd x \\
	&\leq \Xi_{m}^2 \int_{\dom} \left| \int_{\dom} \gamma \rho u |_y \dd y \right|^2 \dd x \\
	&\lesssim_m \int_{\dom} ||\rho||_{\Lp{2}}^2 \dd x. 
	\end{align*}
	We used that $u \in (0,1)$ and that $\gamma$ is bounded.
	
	5. This is also trivial: $||\gamma \rho u||_{\Hm{m}} \lesssim ||\gamma||_{\Hm{m}} ||\rho u||_{\Hm{m}}.$
\end{proof}

\subsubsection{Proof of Lemma \ref{lem:lipbound}}

\begin{proof}[Proof of Lemma \ref{lem:lipbound}]
	Most proofs are trivial. Item 3 is obvious from Lemma \ref{lem:sobbounds}. 
 Item 1 follows directly from proposition \ref{prop:chain} taking into account Remark \ref{rmk:fog}. Items 2, 4, and 5 follow from it. For example:	
	\begin{align*}
	||\rho_1 u_1 - \rho_2 u_2||_{\Hm{m}} &= ||u^+ (\rho_1 - \rho_2) - \kappa (M(\rho_1) - M(\rho_2)||_{\Hm{m}} \\
	&\leq u^+ ||\rho_1 - \rho_2||_{\Hm{m}} + ||\kappa||_{\Hm{m}} ||M(\rho_1) - M(\rho_2)||_{\Hm{m}}.
	\end{align*}	
\end{proof}

\subsubsection{Local Existence and Uniqueness for Regularized Nonautonomous Problem}

\begin{theorem}[Local Existence and Uniqueness for Regularized Nonautonomous Problem]	
	\label{thm:localregaut}
	For any $\epsilon > 0$ and initial condition $\rho_0(x) \in \Hm{m}$ with integer $m>d/2$ and $\rho_0 \geq \mathrm{ess} \inf_{\dom} \rho_0 > 0$ a.e., there exists a unique positive solution $\rheps \in \mathcal{C}^1 ( [0, T_{\epsilon,m}]; \Hm{m} )$  for some $T_{\epsilon,m} > 0$ to the regularized version of \sys:
	\begin{equation}
	\rho^{(\epsilon)}_t = \delta \Jeps[ \lap \left( u^+ \Jeps [\rheps] - \kappa M(\Jeps [\rheps]) \right) ] + \eta - \omega \rheps + I[\gamma \rheps \ueps] - \gamma \rheps \ueps, \label{eq:moll_pdena}
	\end{equation} 	
where $u^{(\eps)} := u(\kappa, \rheps).$
\end{theorem}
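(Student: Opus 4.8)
The plan is to mirror the proof of Theorem \ref{thm:localreg} but to replace the autonomous Picard--Lindeloff result (Lemma \ref{thm:picardb}) with its nonautonomous counterpart (Lemma \ref{thm:picard}), whose additional hypotheses---continuity in $t$, a uniform bound, and a uniform-in-$t$ Lipschitz estimate on a rectangle---must be verified. I would take $X = \Hm{m}$, base point $t_0 = 0$, $v_0 = \rho_0$, and set the right-hand side to $f(t,\rho) := \Aopnon[\rho]$, the nonautonomous operator of Proposition \ref{prop:bounda}. Writing $\ul{\rho}_0 := \mathrm{ess}\inf_{\dom}\rho_0 > 0$, the whole task reduces to fitting the already-established estimates into the rectangle framework of Lemma \ref{thm:picard}.

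First I would fix the spatial radius $\beta$ of the rectangle $\mathcal{R}_0 = \{(t,\rho): |t| \leq \alpha,\ ||\rho - \rho_0||_{\Hm{m}} \leq \beta\}$. Since $m > d/2$ gives $\Hm{m} \hookrightarrow \Lp{\infty}$ (Lemma \ref{lem:sobin}), any $\rho$ with $||\rho - \rho_0||_{\Hm{m}} \leq \beta$ obeys $||\rho - \rho_0||_{\Lp{\infty}} \lesssim \beta$, so choosing $\beta$ small enough forces $\rho \geq \ul{\rho}_0/2 > 0$ a.e. Because the mollifier kernel in \eqref{eq:moll} is a positive, unit-mass periodized Gaussian, $\rho \geq \ul{\rho}_0/2$ a.e. implies $\Jeps[\rho] \geq \ul{\rho}_0/2 > 0$; hence $\Aopnon$ is well-defined throughout $\mathcal{R}_0$, and the ball is a bounded set $\Lambda \subset \Hm{m}$ with $\sup_{\rho \in \Lambda}||\rho||_{\Hm{m}} \leq ||\rho_0||_{\Hm{m}} + \beta =: U$.

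Next I would check the three analytic hypotheses of Lemma \ref{thm:picard}, all of which rest on the \emph{uniform-in-time} data bounds $\Xi_m$ of Hypothesis \ref{hyp:all}. Uniform boundedness $||f(t,\rho)||_{\Hm{m}} \leq M$ follows from Remark \ref{rmk:Abd} with $||\rho||_{\Hm{m}} \leq U$ and the time-uniform bound on $||\eta||_{\Hm{m}}$. The uniform-in-$t$ Lipschitz estimate is precisely Proposition \ref{prop:bounda}, whose implied constant depends only on $m,\eps,U$ and not on $t$; on the bounded ball the quadratic remainder $\tfrac{1}{\eps^2}\mathcal{O}(||\rho_1-\rho_2||_{\Hm{m}}^2)$ is absorbed into a genuine constant $K$ because $||\rho_1-\rho_2||_{\Hm{m}} \leq 2\beta$. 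The one genuinely new ingredient is continuity of $t \mapsto f(t,\rho)$ in $\Hm{m}$ for each fixed $\rho$; this comes from the $\mathcal{C}^\infty$ time-regularity of $\kappa,\eta,\omega,\gamma,\tau$, since each then defines a continuous $\Hm{m}$-valued (resp. kernel-valued) curve in $t$ and the operations composing $\Aopnon$---pointwise products, the smooth map $M$, mollification, the Laplacian, and $I$---are continuous. Choosing $\alpha > 0$ with $\alpha M \leq \beta$ and setting $T_{\eps,m} := \alpha$, Lemma \ref{thm:picard} then yields a unique $\rheps \in \mathcal{C}^1([0,T_{\eps,m}];\Hm{m})$ solving \eqref{eq:moll_pdena} with $\rheps(0) = \rho_0$; since $\alpha M \leq \beta$ keeps the trajectory in $\mathcal{R}_0$, the solution stays $\geq \ul{\rho}_0/2 > 0$ and is therefore positive.

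The main obstacle is not any single estimate---Proposition \ref{prop:bounda} and Remark \ref{rmk:Abd} already supply the hard bounds, and the $t$-continuity is routine given Hypothesis \ref{hyp:all}. Rather, the essential point is the absence of a continuation principle in the nonautonomous setting: Lemma \ref{thm:picard} is purely local and has no analogue of the Extension Lemma (Lemma \ref{thm:extb}), so the interval $T_{\eps,m}$ cannot be enlarged or shown $\eps$-independent by the argument used for the autonomous system. Consequently the real content is recognizing that local existence and uniqueness---but not the later, $\eps$-uniform steps of the program---survive for time-dependent data.
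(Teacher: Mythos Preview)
Your proposal is correct and follows essentially the same approach as the paper: both apply the nonautonomous Picard--Lindeloff lemma (Lemma \ref{thm:picard}) with $X=\Hm{m}$, use the Sobolev embedding to secure positivity of $\rho$ and $\Jeps[\rho]$ on a small $\beta$-ball, invoke Remark \ref{rmk:Abd} for the uniform bound $M$, Proposition \ref{prop:bounda} for the uniform-in-$t$ Lipschitz estimate (absorbing the quadratic remainder on the bounded ball), and then pick $\alpha=\beta/M$. Your explicit verification of $t$-continuity of $\Aopnon$ and your closing remark about the lack of an extension principle in the nonautonomous setting are both apt; the paper leaves the former implicit and makes the latter observation elsewhere.
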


\begin{proof}[Proof of Theorem \ref{thm:localregaut}]
	We need to verify the conditions of Lemma \ref{thm:picard}. With $\Aopnon$ defined as in proposition \ref{prop:bounda} we have $$||\Aopnon[\rho_1] - \Aopnon[\rho_2]||_{\Hm{m}} \leq K||\rho_1 - \rho_2||_{\Hm{m}} + \frac{1}{\epsilon^2} \mathcal{O}(||\rho_1 - \rho_2||_{\Hm{m}}^2)$$ for a Lipschitz constant $K$ that is independent of $t$. 	

Define $$\underline{\rho}_0 = \text{ess} \inf_{\Td} \rho_0$$ and choose $\beta_1(\eps) > 0$ so that when $||\rho-\rho_0||_{\Hm{m}} \leq \beta_1$, we have $$||\rho - \rho_0||_{\Lp{\infty}} \lesssim ||\rho-\rho_0||_{\Hm{m}} < \ul{\rho}_0/2$$ and 
$$||\Jeps [\rho] - \Jeps [\rho_0]||_{\Lp{\infty}} \leq ||\rho - \rho_0||_{\Lp{\infty}} \lesssim ||\rho - \rho_0||_{\Hm{m}} < \ul{\rho}_0/2.$$

Now we choose $0<\beta(\eps) \leq \beta_1(\eps)$ so that when $\rho_1, \rho_2 \in \{ \rho | \quad ||\rho - \rho_0||_{\Hm{m}} < \beta_1 \}$, \\
$||\rho_1 - \rho_2||_{\Hm{m}} \leq \beta$ implies $||\Aopnon[\rho_1] - \Aopnon[\rho_2]||_{\Hm{m}} \leq 2K ||\rho_1 - \rho_2||_{\Hm{m}}$. This gives a uniform Lipschitz condition.
	
	We have that if $||\rho-\rho_0||_{\Hm{m}} \leq \beta$ then $\rho, \Jeps \rho > 0$ a.e. (positivity stems from the maximum principle observation given in the proof of Lemma \ref{lem:moll}). Then, by Remark \ref{rmk:Abd}, $\Aopnon$ is bounded for $\rho$ in this bounded set by a constant we call $M$.
If we choose $\alpha = \beta/M$, then on the rectangle $\mathcal{R}_0 = \{ (t, \rho) | |t| \leq \alpha, ||\rho-\rho_0||_{\Hm{m}} \leq \beta \},$ we have that $\Aopnon$ is uniformly bounded and uniformly Lipschitz in $t$. We therefore have a unique, strongly continuously differentiable solution $\rho^{(\epsilon)}$ to the regularized system valid for some time interval $0 \leq t \leq T_{\epsilon,m}.$	
\end{proof}

\section{Mollifiers on $\Td$}

\label{app:greer}

We present here proofs of various mollifier properties on $\Td$, originally presented in Greer's thesis \cite{greerThesis}. Some details have been added and some corrections have been made.

\begin{proof}[Proof of Lemma \ref{lem:moll}]
$ $ 

1. We note that $\Jeps[f]$ is the solution to the heat equation
\begin{align*}
u_t &= \lap u \\
u(x,0) &= f(x)
\end{align*}
on $\Td$ at $t=\eps.$ The inequality follows from the maximum principle. \\
To prove the uniform convergence, let $\nu>0$. We have that the Fourier series of $f$ is absolutely convergent \cite{alimov1976convergence} so $K>0$ can be chosen so that $$\sum_{k\in \mathbb{Z}^d, |k|\geq K} |\hat f(k)| \leq \nu/2.$$
We have \begin{align*}
|\Jeps[f] - f| &= \lvert \sum_{k \in \mathbb{Z}^d} \hat f(k) \e^{2 \pi i k \cdot x}(1 - \e^{-\eps^2 |k|^2}) \rvert \leq \sum_{k \in \mathbb{Z}^d} |\hat f(k)| (1 - \e^{-\eps^2 |k|^2}) \\
&\leq \sum_{k \in \mathbb{Z}^d, |k| \leq K} |\hat f(k)| (1 - \e^{-\eps^2 |k|^2}) + \frac{\nu}{2} \leq (1 - \e^{-\eps^2 K^2}) \sum_{k \in \mathbb{Z}^d, |k| \leq K} |\hat f(k)| + \frac{\nu}{2} \leq \nu
\end{align*}
for small enough $\epsilon.$ \\
3. By Parseval's formula, 
\begin{align*}
(\Jeps f, g)_{\Lp{2}} &= ( \hat{\Jeps f}, \hat g)_{\ell^2(\mathbb{Z}^d)} = \sum_{k \in \mathbb{Z}^d} \hat f(k) \e^{-\epsilon^2 |k|^2} \hat g(k) \\
&= \sum_{k \in \mathbb{Z}^d} \hat f(k) \hat g(k) \e^{-\eps^2 |k|^2} = ( \hat f, \hat {\Jeps g} )_{\ell^2(\mathbb{Z}^d)} = (f, \Jeps g)_{\Lp{2}}. 
\end{align*} \\
2. Let $\Phi$ be any test function in $\mathcal{C}^\infty(\Td).$ We have
\begin{align*}
(\partial^\alpha \Jeps f, \Phi)_{\Lp{2}} &= (-1)^{|\alpha|} (\Jeps f, \partial^\alpha \Phi)_{\Lp{2}} = (-1)^{|\alpha|} (f, \Jeps \partial^\alpha \Phi)_{\Lp{2}} \\
&= (-1)^{|\alpha|} \sum_{k \in \mathbb{Z}^d} \hat f(k) (2 \pi i k )^\alpha \hat \Phi (k) \e^{-\epsilon^2 |k|^2} = (-1)^{|\alpha|} (f, \partial^\alpha \Jeps \Phi)_{\Lp{2}} \\
&= (\partial^\alpha f, \Jeps \Phi)_{\Lp{2}} = (\Jeps \partial^\alpha f, \Phi)_{\Lp{2}}
\end{align*}
and thus distributional derivatives commute with mollifiers. \\
4. Let $\nu>0$. Since $f \in \Hm{s}$, there is $K>0$ so that 
$$\sum_{k \in \mathbb{Z}^d, |k|>K} (1 + |k|^2)^s |\hat f(k)|^2 < \nu/2.$$
Picking $\eps$ small enough, we have
\begin{align*}
||\Jeps f - f||_{\Hm{s}}^2 &\leq \sum_{k \in \mathbb{Z}^d} (1 + |k|^2)^s (1 - \e^{-\eps^2 |k|^2})^2 |\hat f(k)|^2 \\
&\leq \sum_{k \in \mathbb{Z}^d, |k| \leq K} (1 + |k|^2)^s (1 - \e^{-\eps^2 |k|^2})^2 |\hat f(k)|^2 \\ &+ \sum_{k \in \mathbb{Z}^d, |k| > K} (1 + |k|^2)^s |\hat f(k)|^2 \\
&< \frac{\nu}{2} + \frac{\nu}{2}.
\end{align*}
Thus there is convergence in $\Hm{s}.$ To observe the rate of convergence in $\Hm{s-1}$ is linear in $\eps$, we use $$1 - \e^{-\theta^2} \leq \theta^2.$$ This leads to the bounds
\begin{align*}
\lvert \frac{(1 - \e^{-\eps^2 |k|^2})^2}{1 + |k|^2} \rvert &< \eps^4 \lambda^2, &\quad |k| < \lambda \\
\lvert \frac{(1 - \e^{-\eps^2 |k|^2})^2}{1 + |k|^2} \rvert &< \frac{1}{\lambda^2}, &\quad |k| \geq \lambda 
\end{align*}
so if $\lambda = 1/\epsilon$,
\begin{align*}
||\Jeps f - f||_{\Hm{s-1}}^2 &= \sum_{k \in \mathbb{Z}^d} (1 + |k|^2)^{s-1} (1 - \e^{-\eps^2 |k|^2})^2 |\hat f(k)|^2 \\
&\leq \left( \sup_{k \in \mathbb{Z}^d} \lvert \frac{ (1 - \e^{-\eps^2 |k|^2})^2}{ 1 + |k|^2} \rvert \right) ||f||_{\Hm{s}}^2 \leq \eps^2 ||f||_{\Hm{s}}^2.
\end{align*} \\
5. We first observe that the function $q(x) = (1+x)^\nu \e^{-\eps^2 x}$ has a global maximum at $x = \nu/\eps^2-1$ with value $(\frac{\nu}{\eps^2})^\nu \e^{-(\nu-\eps^2)}.$ Thus, for $\nu>0$ and $1>\epsilon>0$, $$q(|k|^2) = (1 + |k|^2)^\nu \e^{-\eps^2 |k|^2} \leq ( \frac{\nu}{\eps^2})^\nu \e^{-\nu+\eps^2} \leq C \eps^{-2\nu},$$ for a $C$ depending on $\nu$, and $||\Jeps f||_{\Hm{m+\nu}} \lesssim \eps^{-\nu} ||f||_{\Hm{m}}.$ Next, we estimate the representation of $\Jeps \partial^\nu f(x)$ in terms of its Fourier series. Let $\tilde \nu$ be a multi-index with $|\tilde \nu| = \nu.$
\begin{align*}
|\Jeps \partial^{\tilde \nu} f(x)| &= |\sum_{k \in \mathbb{Z}^d} \e^{-\eps^2 |k|^2} (2 \pi i k)^{\tilde \nu} \e^{-2 \pi i k \cdot x} \hat f(k)| \\
&= | \sum_{k \in \mathbb{Z}^d} (\e^{-\frac{\eps^2 |k|^2}{2}}) \left( (2 \pi i k)^{\tilde \nu} \e^{2 \pi i k \cdot x} \e^{-\frac{\eps^2 |k|^2}{2}} \hat f(k) \right)| \\
&\leq \left( \sum_{k \in \mathbb{Z}^d} \e^{-\eps^2 |k|^2} \right)^{1/2} \left( \sum_{k \in \mathbb{Z}^d} |(2 \pi i k)^{2 \tilde \nu} \e^{-\eps^2 |k|^2} |\hat f(k)|^2 \right)^{1/2} \\
& \leq \left( \sum_{k \in \mathbb{Z}^d} \e^{-\eps^2 |k|^2} \right)^{1/2} ||\Jeps \partial^{\tilde \nu} f||_{\Hm{0}} \\ &\lesssim \left( \sum_{k \in \mathbb{Z}^d} \e^{-\eps^2 |k|^2} \right)^{1/2} \frac{1}{\eps^{\nu-m}} ||f||_{\Hm{m}}.
\end{align*}
Now we examine the last inequality: \\

\begin{align*}
\sum_{k \in \mathbb{Z}^d} \e^{-\eps^2 |k|^2} &= \sum_{k \in \mathbb{Z}^d} \prod_{i=1}^d \e^{-\eps^2 k_i^2} \leq 2^d \sum_{k \in \mathbb{Z}_{\geq 0}^d} \prod_{i=1}^d \e^{-\eps^2 k_i^2} \\
&= 2^d \sum_{k_1=0}^\infty  ( \e^{-\eps^2 k_1^2} ( \sum_{k_2=0}^\infty \e^{-\eps^2 k_2^2} ( ... \sum_{k_d=0}^\infty \e^{-\eps^2 k_d^2} ) ... ) ) \\
&=2^d \prod_{i=1}^d \sum_{k_i=0}^{\infty} \e^{-\eps^2 k_i^2} \\ &\leq 2^d ( \sum_{k_i=0}^\infty \e^{-\eps^2 k_i})^d = 2^d (\frac{1}{1 - \e^{-\eps^2}})^d \\ &\leq 2^d \eps^{-d}.
\end{align*} 
This gives the desired bound.
\end{proof}

\end{document}